\documentclass[a4paper]{amsart}

\usepackage[english]{babel}
\usepackage[utf8x]{inputenc}
\usepackage[T1]{fontenc}
\usepackage{bm}

\usepackage[a4paper,top=3cm,bottom=2cm,left=3cm,right=3cm,marginparwidth=1.75cm]{geometry}

\usepackage{amsmath, latexsym, amssymb, amsthm}
\usepackage{graphicx}
\usepackage{enumerate}
\usepackage[colorinlistoftodos]{todonotes}
\usepackage[colorlinks=true, allcolors=blue]{hyperref}
\newtheorem{thm}{Theorem}[section]
\newtheorem{conj}[thm]{Conjecture}
\newtheorem{prop}[thm]{Proposition}
\newtheorem{lem}[thm]{Lemma}
\theoremstyle{plain}
\newtheorem{defn}[thm]{Definition}

\usepackage{comment}
\usepackage{tikz}
\usepackage[version=4]{mhchem}          

\newcommand{\op}[1]{\operatorname{#1}}
\newcommand{\ol}[1]{\operatorname{\sf #1}}
\newcommand{\inn}[3]{\langle {#1}_{#2}, {#1}_{#3} \rangle}

\newcommand{\fivegraphs}{%
\begin{figure}
\centering
\begin{tikzpicture}[scale=0.7]
\draw (-2,0) -- (2,0);
\draw (0,0) -- (0,2);
\foreach \x in {-2,-1,0,1,2}
  \draw[fill] (\x,0) circle (2pt);
\draw[fill] (0,1) circle (2pt);
\draw[fill] (0,2) circle (2pt);
\node at (0,-1) {Type III: $\widetilde{E}_6$};
\begin{scope}[xshift=3cm]
\draw (0,0) -- (6,0);
\draw (3,0) -- (3,1);
\foreach \x in {0,1,...,6}
  \draw[fill] (\x, 0) circle (2pt);
\draw[fill] (3,1) circle (2pt);
\node at (3,-1) {Type IV: $\widetilde{E}_7$};
\end{scope}
\begin{scope}[xshift=10cm]
\draw (0,0) -- (7,0);
\draw (2,0) -- (2,1);
\foreach \x in {0,1,...,7}
  \draw[fill] (\x, 0) circle (2pt);
\draw[fill] (2,1) circle (2pt);
\node at (3.5,-1) {Type V: $\widetilde{E}_8$};
\end{scope}
\begin{scope}[yshift=6cm]
\draw (0,0) arc [start angle=-180, end angle=90, radius=2] (2,2);
\draw [dashed] (2,2) arc [start angle=90, end angle=180, radius=2] (0,0);
\foreach \x in {-180,-135,...,90}
  \draw[fill] (2,0) + (\x:2) circle (2pt);
\node at (2,-3) {Type I: $\widetilde{A}_\ell$};
\begin{scope}[xshift=3cm]
\draw (4,1) -- (5,0) -- (4,-1);
\draw (5,0) -- (7,0);
\draw [dashed] (7,0) -- (9,0);
\draw (9,0) -- (11,0);
\draw (12,1) -- (11,0) -- (12,-1);
\foreach \x in {5,6,7,9,10,11}
\draw [fill] (\x, 0) circle (2pt);
\draw [fill] (4,  1) circle (2pt);
\draw [fill] (4, -1) circle (2pt);
\draw [fill] (12, 1) circle (2pt);
\draw [fill] (12,-1) circle (2pt);
\node at (8,-3) {Type II: $\widetilde{D}_\ell$};
\end{scope}
\end{scope}
\end{tikzpicture}
\caption{The five types of connected graphs with maximum eigenvalue $2$, which are also the Dynkin diagrams of affine reflection groups~\cite{humphreys1992reflection}}
\label{fig:graphtypes}
\end{figure}
}

\title{Equiangular lines and the Lemmens-Seidel conjecture}
\author[Yen-chi.~R.~Lin]{Yen-chi Roger Lin}
\address{Department of Mathematics, National Taiwan Normal University, Taipei, Taiwan 11677}
\email{yclinpa@gmail.com}
\author{Wei-Hsuan Yu}
\address{Department of Mathematics, National Central University, Chungli, Taoyuan, Taiwan 32001}
\email{u690604@gmail.com}
\date{\today}
\keywords{Equiangular set, pillar methods, Lemmens-Seidel}
\subjclass[2010]{52C35}

\begin{document}
\begin{abstract}
In this paper, claims by Lemmens and Seidel in 1973 about equiangular sets of lines with angle $1/5$ are proved by carefully analyzing pillar decomposition, with the aid of the uniqueness of two-graphs on $276$ vertices. 
The Neumann Theorem is generalized in the sense that if there are more than $2r-2$ equiangular lines in $\mathbb{R}^r$, then the angle is quite restricted. 
Together with techniques on finding saturated equiangular sets, we determine the maximum size of equiangular sets ``exactly'' in an $r$-dimensional Euclidean space for $r = 8$, $9$, and $10$.
\end{abstract}

\maketitle


\section{Introduction}
\label{sec:intro}
A set of lines in Euclidean space is called \emph{equiangular} if any pair of lines forms the same angle. 
For examples, the four diagonal lines of a cube are equiangular in $\mathbb{R}^3$ with the angle $\arccos (1/3)$, 
and the six diagonal lines of an icosahedron form $6$ equiangular lines with angle $\arccos (1/\sqrt 5)$. 
The structure of methane \ce{CH4} also contains equiangular lines: 
carbon-hydrogen chemical bounds form the same angle (about $109.5$ degrees). 
Equiangular lines in real and complex spaces are related to many beautiful mathematical topics and even quantum physics, such as SIC-POVM~\cite{renes2004symmetric, scott2010symmetric, scott2006tight, zauner2011grundzuge}.
First, equiangular lines in real spaces are equivalent to the notion of \emph{two-graphs} which caught much attention in algebra~\cite{godsil2013}. 
A classical way to construct equiangular lines comes from combinatorial designs. 
For instance, the 90 equiangular lines in $\mathbb{R}^{20}$ and 72 equiangular lines in $\mathbb{R}^{19}$ can be obtained from the Witt design. 
The details can be found in Taylor's thesis in 1971~\cite{taylor1971}. 
The spherical embedding of certain strongly regular graphs can also give arise to equiangular lines ~\cite{cameron2004strongly}; 
the maximum size of equiangular lines in $\mathbb{R}^{23}$ is $276$ which can be constructed from the strongly regular graphs with parameters $(276,135,78,54)$. 
Such configuration is the solution to the energy minimizing problems~\cite{saff1997distributing}, also known as the \emph{Thomson Problem}. 
The Thomson problem, posed by the physicist J.~J.~Thomson in 1904~\cite{thomson1904xxiv}, is to determine the minimum electrostatic potential energy configuration of $N$ electrons constrained to the surface of a unit sphere that repel each other with a force given by Coulomb's law. 
The configuration of several maximum equiangular lines would give arise to the minimizer of a large class of energy minimizing problems called the \emph{universal optimal codes}~\cite{cohn2007universally}. 
Furthermore, if we have $\frac{r(r+1)}{2}$ equiangular lines in $\mathbb{R}^r$ (which is known as the \emph{Gerzon bounds}~\cite{lemmens1973}), then they will offer the construction of tight spherical $5$-designs~\cite{delsarte1977spherical} which are also universal optimal codes. 
So far, only when $r=2,3,7$, and $23$ can the Gerzon bounds be achieved.
The special sets of equiangular lines, called \emph{equiangular tight frames} (ETFs) refer to the optimal line packing problems~\cite{mixon2018short}. 
ETFs achieve the classical Welch bounds~\cite{welch1974lower} which are the lower bounds for maximum absolute value of inner product values between distinct points on unit sphere, i.e. if we have $M$ points $\{x_i\}_{i=1}^M$ on the unit sphere in $\mathbb{R}^{r}$, then 
\begin{equation*}
\max_{i \neq j} |\langle x_i,x_j \rangle| \geq \sqrt{\frac{M-r}{r(M-1)}}.
\end{equation*}
The study of ETFs has numerous references \cite{fickus2018tremain, strohmer2003, fickus2016equiangular, jasper2014kirkman, barg2015finite, waldron2009construction, strohmer2003}. 

From another point of view, a set of equiangular lines can be regarded as the collection of points on the unit sphere such that distinct points in the set have mutual inner products either 
$\alpha$ or $-\alpha$ for some $\alpha \in [0,1)$.
Below we formally state its definition.
\begin{defn}
\label{defn:equiangular}
We say that a finite set of unit vector $X = \{ x_1, \dots, x_s \}$ in $\mathbb R^r$ is an \emph{equiangular set} if for some $\alpha \in [0,1)$,
\begin{equation}
\label{eq:equi-a}
\langle x_i, x_j \rangle \in \{ -\alpha, \alpha \} \qquad \text{whenever $i \neq j$}.
\end{equation}
\end{defn}
By abuse of language, we will say that a set of vectors which satisfy the condition~(\ref{eq:equi-a}) are equiangular with \emph{angle} $\alpha$, although the actual angle of intersection is $\arccos \alpha$. 
A natural question in this context is: what is the maximum size of equiangular sets in $\mathbb{R}^r$?
We denote by $M(r)$ for this quantity. 
The values of $M(r)$ were extensively studied over the last 70 years.
It is easy to see that $M(2) = 3$ and the maximum construction is realized by the three diagonal lines of a regular hexagon. 
In 1948, Haantjes~\cite{haantjes1948} showed that $M(3) = M(4) = 6$.
In 1966, van~Lint and Seidel~\cite{vanlint1966} showed that $M(5) = 10$, $M(6) = 16$, and $M(7) \geq 28$. 
Currently, there are only 35 known values for $M(r)$ and all of them have that $r \leq 43$.  
To the best of our knowledge, the ranges of $M(r)$ for $2 \leq r \leq 43$ are listed in Table~\ref{tb:smallnd} (see~\cite{azarija2016,barg2014,greaves2016,greaves2018equiangular,greaves2019equiangular,lin2018saturated,yu2015}).

\begin{table}[h]
	\centering
    \caption{Maximum cardinalities of equiangular lines for small dimensions}
    \label{tb:smallnd}
    \begin{tabular}{c|ccccccccc}
   		$r$    & 2 & 3--4 & 5  & 6  & 7--13 & 14     & 15 & 16     & 17 \\ \hline
        $M(r)$ & 3 & 6    & 10 & 16 & 28    & 28--29 & 36 & 40--41 & 48--49 \\
        \hline\hline
        $r$    & 18     & 19     & 20     & 21  & 22  & 23--41 & 42       & 43 \\ \hline
        $M(r)$ & 56--60 & 72--75 & 90--95 & 126 & 176 & 276    & 276--288 & 344
    \end{tabular}
\end{table}

Note that for the dimensions $r=14,16,17,18,19,20$, determining the exact values of $M(r)$ is still an open problem; though we know that the current well-known maximum constructions of equiangular lines are saturated \cite{lin2018saturated}, i.e.\ the current maximum constructions of equiangular lines cannot be added any more line while keeping equiangular. 
The estimation of upper bounds for equiangular lines can be considered from several different methods. 
The bounds could be achieved by semidefinite programming method~\cite{barg2014, okuda2016new, glazyrin2018upper}, the analysis of eigenvalues of the Seidel matrices~\cite{greaves2016, greaves2018equiangular, greaves2018equiangular}, polynomial methods~\cite{glazyrin2018upper}, Ramsey theory for asymptotic bounds~\cite{balla2018equiangular}, forbidden subgraphs for graphs of bounded spectral radius~\cite{jiang2017forbidden, jiang2019equiangular}, and algebraic graphs theory~\cite{godsil2013, neumaier1989graph}. 

The motivation for the study of equiangular lines can also be various. 
For instance, Bannai, Okuda and Tagami~\cite{bannai2015spherical} considered the tight harmonic index 4-designs problems and proved that the existence of tight harmonic index 4-designs is equivalent to the existence of $\frac{(r+1)(r+2)}{6}$ equiangular lines with angle $\sqrt{\frac{3}{r+4}}$ in $\mathbb{R}^r$. 
Later, Okuda-Yu~\cite{okuda2016new} proved such equiangular lines do not exist for all $r > 2$. 
For more information about harmonic index $t$-designs, please see the references~\cite{bannai2015spherical, zhu2017spherical, bannai2018half, bannai2018classification}.   

The main contribution for this paper is that we proved the result which Lemmens-Seidel claimed true in 1973. 
In \cite{lemmens1973}, Lemmens and Seidel claimed that the following conjecture holds when the base size $K = 2, 3, 5$ (for the definition of base size, see Definition~\ref{defn:base-size}):

\begin{conj}[\cite{lemmens1973}, Conjecture~5.8]
\label{conj:LS}
The maximum size of equiangular sets in $\mathbb{R}^r$ for angle $\frac{1}{5}$ is $276$ for $23 \leq r \leq 185$,
and $\lfloor \frac12 (r-5) \rfloor + r + 1$ for $r \geq 185$.
\end{conj}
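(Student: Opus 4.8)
The plan is to establish matching lower and upper bounds for $M_{1/5}(r)$, the largest cardinality of an equiangular set at angle $1/5$ in $\mathbb R^r$. The lower bounds come from two constructions. For $23\le r\le 185$ one only needs that the strongly regular graph with parameters $(276,135,78,54)$ carries a regular two-graph on $276$ vertices, hence $276$ equiangular lines at angle $1/5$ sitting inside $\mathbb R^{23}\subseteq\mathbb R^r$, so that $M_{1/5}(r)\ge 276$. For $r\ge 185$ one must instead exhibit $\lfloor\tfrac12(r-5)\rfloor+r+1$ such lines; I would build them around a regular simplex of six unit vectors in $\mathbb R^5$ with pairwise inner product $-1/5$ and adjoin, as the dimension grows, a path-like block in the orthogonal directions that gains roughly one extra line per two extra coordinates, in the spirit of the Lemmens--Seidel family of $2(r-1)$ lines at angle $1/3$. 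Since $\lfloor\tfrac12(r-5)\rfloor+r+1=276$ exactly at $r=185$ the two families agree there, so it remains only to prove the upper bound $M_{1/5}(r)\le\max\{276,\ \lfloor\tfrac12(r-5)\rfloor+r+1\}$.

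For the upper bound I would run the Lemmens--Seidel pillar decomposition~\cite{lemmens1973}. Fix a maximum equiangular set $\mathcal L$ at angle $1/5$ in $\mathbb R^r$, choose a base $B\subseteq\mathcal L$, let $K=|B|$ be its base size (Definition~\ref{defn:base-size}), and project $\mathcal L\setminus B$ onto $(\mathrm{span}\,B)^\perp$. Since the sign pattern of a line against $B$ determines the contribution of $B$ to all of that line's projected inner products, this sorts $\mathcal L\setminus B$ into at most $2^{K-1}$ \emph{pillars}, and within a single pillar the Gram matrix of the rescaled projected vectors has the rigid shape $\beta I+\gamma J+\delta A$, where $A$ is the adjacency matrix of the graph recording which of the two possible inner products occurs. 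Positive semidefiniteness of this matrix forces the spectral radius of $A$ to be at most $2$, so each connected component is an ordinary or affine Dynkin diagram of type $A$, $D$, or $E$, the maximal ones being those listed in Figure~\ref{fig:graphtypes}. The exceptional affine types are bounded in size, so only a controlled number of pillars can be long, and a careful accounting of how the long pillars share the $r-|B|$ directions orthogonal to $B$ is what eventually produces the coefficient $\tfrac12$.

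It then remains to bound $K$ and to sum the pillar contributions. Using the generalized Neumann theorem --- if $\mathbb R^r$ contains more than $2r-2$ equiangular lines then the angle is heavily restricted --- together with standard eigenvalue-multiplicity and absolute-bound arguments, one is reduced to base sizes $K\in\{2,3,5\}$, exactly the cases singled out in~\cite{lemmens1973}, the remaining small configurations being negligible. For $K\le 3$ the pillar count together with the dimension bookkeeping already yields at most $\lfloor\tfrac12(r-5)\rfloor+r+1$, with room to spare. The decisive case is $K=5$: one must control all of the up to sixteen pillars simultaneously, using the inner products between lines lying in distinct pillars to forbid too many long pillars from coexisting and tracking dimensions exactly, in order to conclude $|\mathcal L|\le\max\{276,\ \lfloor\tfrac12(r-5)\rfloor+r+1\}$. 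Finally, when $r\le 185$ and $|\mathcal L|=276$, the set is, after Seidel switching, a regular two-graph on $276$ vertices; invoking its uniqueness shows it is the configuration already realised in $\mathbb R^{23}$, so $276$ cannot be exceeded for $r\le 185$, whereas for $r>185$ the pillar count saturates exactly at the path-like family.

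The crux of the argument is the $K=5$ analysis: bounding each pillar in isolation is routine once Figure~\ref{fig:graphtypes} is available, but it is the cross-pillar inner-product constraints --- combined with a dimension count precise enough both to rule out every cardinality strictly between the two lower-bound families and to force the extremal $276$-line example onto the unique regular two-graph on $276$ vertices --- that make the thresholds $276$ and $185$ appear, and this is where the bulk of the case analysis lives. A secondary obstacle is the exclusion of base size $4$ and of $K\ge 6$, which rests on the generalized Neumann theorem rather than on the pillar count alone.
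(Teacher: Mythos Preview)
Your proposal contains a genuine gap that prevents it from establishing the conjecture, and it also misidentifies what the paper actually accomplishes.

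The central error is your treatment of the base size $K=4$. You assert that ``using the generalized Neumann theorem \dots\ one is reduced to base sizes $K\in\{2,3,5\}$'' and that ``the exclusion of base size $4$ and of $K\ge 6$ \dots\ rests on the generalized Neumann theorem.'' This is not correct. The generalized Neumann theorem (Theorem~\ref{thm:general Neu}) constrains the \emph{angle} $\alpha$ when $|X|>2r-2$; it says nothing whatsoever about the base size. For $\alpha=\tfrac15$ the base size ranges over $\{2,3,4,5,6\}$ by Proposition~\ref{prop:k-a1}, and each value must be handled separately. The case $K=6$ is Lemmens and Seidel's own Theorem~5.7; the paper here supplies the missing proofs for $K=3$ (Theorem~\ref{thm:a5-k3}) and $K=5$ (Theorem~\ref{thm:a5-k5}); but for $K=4$ the paper obtains only the bound of Proposition~\ref{prop:a5-412}, which it explicitly notes ``will never beat the Lemmens--Seidel bound.'' The $(4,2)$ pillars are the obstruction: their projected vectors form a $2$-distance set with inner products $\tfrac{1}{13},-\tfrac{5}{13}$, and no adequate bound on $s(r-4,\tfrac{1}{13},-\tfrac{5}{13})$ is known. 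So the full conjecture is \emph{not} proved here, and your plan offers no mechanism to close the $K=4$ case.

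Even within the cases the paper does settle, your sketch diverges from the actual arguments. The Dynkin-diagram constraint you invoke (spectral radius $\le 2$) applies only to $(6,3)$ pillars, not to arbitrary pillars; the paper exploits it for $K=5$ by adjoining $p_6=-\sum p_i$ so that $(5,2)$ pillars become $(6,3)$ pillars, and then combines the resulting nullity bound (Lemma~\ref{lem:a5k5-one}) with the uniqueness of the $276$-vertex two-graph in a quite specific way. For $K=3$ the argument is entirely different: a Schur-complement computation against four vectors in a neighbouring pillar (Lemma~\ref{lem:two-X31}) caps each $(3,1)$ pillar at $54$. Neither case proceeds by the uniform ``cross-pillar inner-product constraints plus dimension count'' you describe, and in particular the coefficient $\tfrac12$ in the conjectured bound arises only in the $K=6$ analysis of Lemmens and Seidel, not from anything done here.
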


Although the conjecture was prominent in the study of equiangular lines,
no proof was found in the literature for the cases $K=3,5$.
Following the discussion of pillar methods, we use techniques from linear algebra, linear programming, and the uniqueness of the two-graphs with $276$ vertices to prove the $K=3,5$ cases, and offer a partial solution for $K=4$. 
We also offer better upper bounds for the equiangular sets for some special setting on pillar conditions. 

There is another interesting phenomenon that receives our attention.
It is well known that $M(8) = 28$ (see~Table~\ref{tb:smallnd}),
but those $28$ lines always live in a $7$-dimensional subspace of $\mathbb R^8$ (\cite{glazyrin2018upper}, Theorem 4).
Glazyrin and Yu~\cite{glazyrin2018upper} asks the maximum size of equiangular sets of general \emph{ranks}.
The following theorem essentially states that the angle is restricted when the size of equiangular set is large enough.
\begin{thm}[Neumann, cf.~\cite{lemmens1973}]
\label{thm:neumann}
Let $X$ be an equiangular set with angle $\alpha$ in $\mathbb R^r$.
If $|X| > 2r$, then $\frac{1}{\alpha}$ is an odd integer.
\end{thm}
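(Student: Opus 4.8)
The plan is to pass to the Seidel matrix of the configuration and exploit a multiplicity argument for its eigenvalues. Given the equiangular set $X = \{x_1,\dots,x_s\}$ with $s = |X| > 2r$, form the Gram matrix $G = I + \alpha S$, where $S$ is the $s \times s$ symmetric matrix with zero diagonal and off-diagonal entries $S_{ij} = \frac{1}{\alpha}\langle x_i, x_j\rangle \in \{+1,-1\}$; this $S$ is the Seidel matrix of the associated two-graph. Since the $x_i$ lie in $\mathbb R^r$, we have $\operatorname{rank} G \le r$, so $G$ is positive semidefinite of rank at most $r$, and hence $-\frac{1}{\alpha}$ is an eigenvalue of $S$ with multiplicity $m := s - \operatorname{rank} G \ge s - r > r \ge \operatorname{rank} G$. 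In particular the eigenvalue $-\frac1\alpha$ of $S$ occurs with multiplicity strictly greater than $s/2$ (indeed $m > r \ge s - m$, so $m > s/2$), so it is the unique eigenvalue of $S$ of maximal multiplicity.

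The next step is to use integrality of the characteristic polynomial of $S$. Because $S$ has integer entries, $\chi_S(t) = \det(tI - S) \in \mathbb Z[t]$ is monic with integer coefficients, so its roots are algebraic integers and the Galois group of its splitting field permutes the eigenvalues of $S$ while preserving multiplicities. Since $-\frac1\alpha$ is the unique eigenvalue of $S$ of its (maximal) multiplicity $m$, every Galois conjugate of $-\frac1\alpha$ must again be an eigenvalue of multiplicity $m$, hence must equal $-\frac1\alpha$ itself. Therefore $-\frac1\alpha$ is a rational algebraic integer, i.e. $\frac1\alpha \in \mathbb Z$. Write $\frac1\alpha = n$ with $n \ge 2$ an integer (the case $\alpha = 1$ being excluded by definition, and $\alpha < 1$ forcing $n \ge 2$).

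It remains to show $n$ is odd. Here I would reduce modulo $2$: since $S$ has entries in $\{0,\pm1\}$, reducing $S + J \pmod 2$ (where $J$ is the all-ones matrix) or, more directly, observing that $S \equiv J - I \pmod 2$ as matrices over $\mathbb F_2$, we get $\chi_S(t) \equiv \chi_{J-I}(t) = (t+1)^{s-1}(t+1-s) \pmod 2$, so $\chi_S(t) \equiv (t+1)^{s} \pmod 2$ when $s$ is even, and in general $\chi_S(t) \equiv (t-1)^s \pmod 2$. Evaluating the integer $\chi_S(-n) = \det(-nI - S) = 0$ and reducing mod $2$ gives $(-n-1)^s \equiv 0$ or an analogous congruence forcing $n + 1 \equiv 0 \pmod 2$, i.e. $n$ odd. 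The one subtlety worth care is making the mod-$2$ bookkeeping precise: one cleanly handles it by noting $-n$ is a root of $\chi_S$ over $\mathbb Z$, so $2 \mid \chi_S(-n)$ is automatic, but we need $\chi_S(-n) \equiv (\,-n - 1)^{s}$ or $(-n-1)^{s-1}(\cdots) \pmod 2$ to be nonzero unless $2 \mid (n+1)$; since $s > 2r \ge 2$ this exponent is positive, and the correction term coming from $-s$ vanishes mod $2$ precisely when $s$ is even, which one checks is the relevant case (when $s$ is odd a parity count on $\operatorname{tr} S = 0$ and the multiplicities pins down the same conclusion).

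The main obstacle is the last paragraph: getting the parity of $n$ exactly right requires being careful about whether $s$ is even or odd and about the rank-one correction $J$, rather than any deep new idea. Everything before that — positive semidefiniteness of $G$, the multiplicity bound $m > s/2$, and the Galois argument forcing $\frac1\alpha \in \mathbb Z$ — is robust and short.
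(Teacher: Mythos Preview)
The paper does not supply its own proof of this theorem; it is quoted from Lemmens--Seidel as background, and in the proof of Theorem~\ref{thm:general Neu} the paper simply writes ``using the same argument as in the proof of the original Neumann theorem''. Your approach via the Seidel matrix $S$ is precisely that classical argument, and the integrality half is correct and cleanly written: the multiplicity bound $m>s/2$ together with the Galois action on the roots of $\chi_S\in\mathbb Z[t]$ forces $-1/\alpha$ to be a rational algebraic integer, hence an integer $n$.

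The gap is in the parity step when $s$ is odd. Evaluating $\chi_S(-n)=0$ modulo~$2$ yields $(1-n)^{s-1}(1-n-s)\equiv 0\pmod 2$. For $s$ even this reads $(1-n)^s\equiv 0$ and does force $n$ odd; but for $s$ odd it becomes $(1-n)^{s-1}\cdot n\equiv 0$, which holds automatically since exactly one of $n$ and $1-n$ is even --- so no information is gained. Your fallback, ``a parity count on $\operatorname{tr} S=0$ and the multiplicities'', does not close this: from $\operatorname{tr} S=0$ you only get that the remaining $s-m$ eigenvalues sum to $nm$, which by itself carries no parity information about $n$.

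The fix is to use the full multiplicity rather than a single root. Since $n\in\mathbb Z$ and $\chi_S$ is monic in $\mathbb Z[t]$, Gauss's lemma gives $(t+n)^m\mid\chi_S(t)$ in $\mathbb Z[t]$. Reducing modulo~$2$, $(t+\bar n)^m$ divides $(t+1)^{s-1}(t-s+1)$ in $\mathbb F_2[t]$. If $n$ were even, then $t^m$ would divide this product, which equals $(t+1)^s$ or $t(t+1)^{s-1}$ according as $s$ is even or odd; unique factorization in $\mathbb F_2[t]$ then forces $m\le 1$, contradicting $m>s/2>1$. Hence $n$ is odd. (Equivalently: $S+nI$ has $\mathbb Q$-rank $s-m<s/2$, hence $\mathbb F_2$-rank below $s/2$ since all larger minors vanish; but for $n$ even $S+nI\equiv J-I\pmod 2$, whose $\mathbb F_2$-rank is $s$ or $s-1$.)
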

We first give a generalization of the Neumann theorem (see Theorem~\ref{thm:general Neu}),
then we employ the techniques about saturated equiangular sets in~\cite{lin2018saturated} to determine the maximum size of equiangular sets of ranks $8$, $9$, and $10$.  

The organization of the paper is as follows.
In Section~\ref{sec:prerequisites} we review the basic notations in the study of equiangular sets and recall the pillar decomposition introduced by Lemmens and Seidel~\cite{lemmens1973}.
In Section~\ref{sec:schur} we determine the maximum size of a pillar with orthogonal vectors only. 
In Section~\ref{sec:a5} we provide a proof for the Lemmen-Seidel conjecture when the base size $K = 3$ or $5$, and also give a new upper bound for $K = 4$.
In Section~\ref{sec:max-rank} we discuss the maximum size of equiangular sets of prescribed rank.
We close this paper with some discussions and proposing two conjectures based on our computations.


\section{Prerequisites}
\label{sec:prerequisites}

Throughout this paper, $\hat{x}$ denotes the unit vector in the same direction as a non-zero vector $x$ in an Euclidean space.
We start with some basic definitions for equiangular sets.
Let $X$ be an equiangular set with angle $\alpha$ in $\mathbb R^r$.
There are a few mathematical objects that could be associated to $X$.

\begin{defn}
\label{defn:gramian}
Let $X = \{ x_1, \dots, x_s \} \in \mathbb R^r$ be a finite set of vectors.  The \emph{Gram matrix} of $X$, denoted by $G(X)$ or $G(x_1, \dots, x_s)$, is the matrix of mutual inner products of $x_1$, \dots, $x_s$; that is,
\begin{equation*}
G(X) = X^{\ol{T}} X = \begin{bmatrix}
\langle x_i, x_j \rangle
\end{bmatrix}_{i,j=1}^s
\end{equation*}
\end{defn}

When $X$ is equiangular with angle $\alpha$, then its Gram matrix $G(X)$ is symmetric and positive semidefinite, with entries $1$ along its diagonal and $\pm \alpha$ elsewhere.
The rank of $G(X)$ is the dimension of the span of vectors in $X$; $X$ is linearly independent if and only if $G(X)$ is of full rank (or equivalently, positive definite).

\begin{defn}
\label{defn:seidel-graph}
For an equiangular set $X = \{ x_1, \dots, x_s \}$ with angle $\alpha$, the \emph{Seidel graph} of $X$ is a simple graph $S(X)$ whose vertex set is $X$, and two vertices $x_i$ and $x_j$ of $S(X)$ are adjacent if and only if $\langle x_i, x_j \rangle = -\alpha$.
\end{defn}

Since we are interested in equiangular lines in $\mathbb R^r$, choices need to be made between two unit vectors that span the same line.
However, the choices could affect the signs of their mutual inner products.
If two sets of vectors represent the same set of lines, they are called in the same \emph{switching class}.
This terminology comes from the graph theory: if we \emph{switch} a vertex $v$ in a simple graph, the resulting graph is obtained by removing all edges that are incident to $v$ but adding edges connecting $v$ to all vertices that were not adjacent to $v$.
We also have the freedom to relabel the vertices of the graph.
All these actions lead to the following proposition about the switching equivalence for two Gram matrices.

\begin{prop}[\cite{king2016}, Definition~4]
\label{prop:switching-equivalent}
Two sets of unit vectors $X$, $Y$ in $\mathbb R^r$ are in the same switching class if and only if there are a diagonal $(1,-1)$-matrix $B$ and a permutation matrix $C$ such that
\begin{equation*}
(CB)^{\ol{T}} \cdot G(X) \cdot (CB) = G(Y).
\end{equation*}
We would also say that $G(X)$ is switching equivalent to $G(Y)$, and write $G(X) \simeq G(Y)$.
\end{prop}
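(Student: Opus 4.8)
The plan is to translate each of the two elementary operations that define the switching class into a matrix operation on the Gram matrix, compose them, and read off the factorization $CB$; the converse is then obtained by realizing a signed permutation matrix through those operations together with the rigidity of Gram matrices.

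First I would set up notation by writing $X$ also for the $r\times s$ matrix whose columns are $x_1,\dots,x_s$, so that $G(X)=X^{\ol T}X$. Re-choosing the unit vector on some lines, i.e.\ replacing $x_i$ by $-x_i$ for $i$ in some subset, is exactly $X\mapsto XB$ for a diagonal $(1,-1)$-matrix $B$, and relabeling is $X\mapsto XP$ for a permutation matrix $P$; in both cases $G(X)\mapsto M^{\ol T}G(X)M$ with $M=B$ or $M=P$. A global orthogonal change of coordinates $X\mapsto UX$, $U\in O(r)$, leaves $G(X)$ unchanged. I would then record that the matrices of the form $CB$ (with $C$ a permutation matrix and $B$ diagonal $(1,-1)$) are precisely the signed permutation matrices, and that they form a group: conjugating a diagonal $(1,-1)$-matrix by a permutation matrix yields another diagonal $(1,-1)$-matrix, so products and inverses of such matrices again factor as $CB$. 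Consequently any finite composition of the above operations sends $X$ to $UXQ$ with $U\in O(r)$ and $Q=CB$.

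For the forward implication, if $X$ and $Y$ lie in the same switching class then $Y=UXQ$ for some $U\in O(r)$ and some $Q=CB$ as above, whence
\[
G(Y)=(UXQ)^{\ol T}(UXQ)=Q^{\ol T}X^{\ol T}U^{\ol T}UXQ=(CB)^{\ol T}\,G(X)\,(CB),
\]
which is the claimed relation. For the converse, suppose $(CB)^{\ol T}G(X)(CB)=G(Y)$ and put $Q=CB$. The columns of $Q$ are signed standard basis vectors, so the columns of $Z:=XQ$ are a signed reordering of $x_1,\dots,x_s$, and hence $Z$ is in the same switching class as $X$; moreover $G(Z)=Q^{\ol T}G(X)Q=G(Y)$. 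Since a finite set of vectors in Euclidean space is determined up to an element of $O(r)$ by its Gram matrix, there is $U\in O(r)$ with $Y=UZ$, so $Y$ is obtained from $X$ by the elementary operations followed by an isometry and therefore lies in the same switching class as $X$.

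I expect the only real care to be bookkeeping rather than mathematics. One must fix conventions (columns versus rows, transposes, left versus right multiplication) so that the outcome is literally $(CB)^{\ol T}\cdot G(X)\cdot(CB)$ and not a transpose or inverse of it; here it helps that $B^{\ol T}=B=B^{-1}$ for a diagonal $(1,-1)$-matrix and $C^{\ol T}=C^{-1}$ for a permutation matrix, so the various equivalent forms collapse to the stated one. The single genuine ingredient is the Gram-matrix rigidity used in the converse, which is classical (diagonalize $G(Y)$, take a square root, and match orthonormal bases); it is also the place where ``same switching class'' must be read as equality of line configurations up to the ambient orthogonal group — consistent with the two-graph viewpoint, in which the switching class is a purely combinatorial datum — since otherwise the converse would fail (for instance $\{e_1\}$ and $\{e_2\}$ in $\mathbb R^2$ have the same Gram matrix but span different lines).
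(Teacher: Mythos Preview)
The paper does not actually give a proof of this proposition: it is cited as Definition~4 of \cite{king2016}, and the only justification is the informal paragraph preceding the statement, which explains that flipping the sign of a representative vector corresponds to conjugating the Gram matrix by a diagonal $(1,-1)$-matrix and relabeling corresponds to conjugating by a permutation matrix. In that sense your proposal is not so much a different route as it is a genuine proof where the paper offers none.

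Your argument is correct and considerably more careful than the paper's discussion. In particular, your treatment of the converse is a real addition: you correctly observe that equality of Gram matrices determines the vectors only up to an element of $O(r)$, so that ``same switching class'' must be read as equality of line configurations modulo ambient isometries (equivalently, as equality of the associated two-graph). The paper's informal discussion glosses over this point entirely. Your remark that the signed permutation matrices form a group, so that any composite of sign-flips and relabelings still factors as $CB$, is also a detail the paper leaves implicit. The only thing to add is that, given the statement is labeled as a \emph{definition} in the source, one could equally well take the matrix relation as the definition of switching equivalence of Gram matrices and then your forward direction becomes the substantive claim; either way your argument covers it.
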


As usual, let $I_s$ (resp.\ $J_s$) denote the identity matrix (resp.\ all-one matrix) of size $s \times s$; the subscript $s$ will sometimes be dropped when the size is clear from the context.

\begin{prop}[\cite{lemmens1973}, Section~4]
\label{prop:k-a1}
If there are $k \geq 2$ equiangular vectors $p_1$, \dots, $p_k$ such that
\begin{equation*}
G(p_1, \dots, p_k) \simeq (1 + \alpha) I - \alpha J, \qquad \alpha > 0,
\end{equation*}
then $k \leq \frac{1}{\alpha} + 1$.  Furthermore, if $k < \frac{1}{\alpha} + 1$, then the vectors $p_1$, \dots, $p_k$ are linearly independent; but if $k = \frac{1}{\alpha} + 1$, then the vectors $p_1$, \dots, $p_k$ are linearly dependent.  In fact, if $k = \frac{1}{\alpha} + 1$ and $G(p_1, \dots, p_k) = (1 + \alpha) I - \alpha J$, the vectors $p_1$, \dots, $p_k$ form a $k$-simplex in $\mathbb R^{k-1}$.
\end{prop}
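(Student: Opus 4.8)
The plan is to extract everything from the spectrum of the model matrix $M := (1+\alpha)I_k - \alpha J_k$, using that switching equivalence is realized by conjugation by an \emph{orthogonal} matrix and hence preserves eigenvalues. First I would diagonalize $M$: since $J_k$ has eigenvalue $k$ on the all-ones vector $\mathbf 1$ and eigenvalue $0$ on $\mathbf 1^{\perp}$, the matrix $M$ has the simple eigenvalue $1-(k-1)\alpha$ (eigenvector $\mathbf 1$) and the eigenvalue $1+\alpha$ with multiplicity $k-1$. The matrix $CB$ in Proposition~\ref{prop:switching-equivalent} is a permutation matrix times a diagonal $(1,-1)$-matrix, hence orthogonal, so $G(p_1,\dots,p_k) = (CB)^{\ol T} M (CB)$ has the same spectrum as $M$. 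A Gram matrix is positive semidefinite, so all its eigenvalues are nonnegative; as $\alpha>0$ the eigenvalue $1+\alpha$ is automatically positive, and the remaining requirement $1-(k-1)\alpha \geq 0$ is precisely $k \leq \frac1\alpha + 1$.

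The two refinements then fall out of the same computation. If $k < \frac1\alpha+1$ then $1-(k-1)\alpha>0$, so every eigenvalue of $G(p_1,\dots,p_k)$ is positive, i.e.\ $G(p_1,\dots,p_k)$ is positive definite, i.e.\ $p_1,\dots,p_k$ are linearly independent. If $k=\frac1\alpha+1$ then $1-(k-1)\alpha=0$, so $G(p_1,\dots,p_k)$ has rank $k-1$; the $p_i$ thus span a $(k-1)$-dimensional space while numbering $k$, forcing a linear dependence among them.

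For the final assertion, suppose $k=\frac1\alpha+1$ and $G(p_1,\dots,p_k)=M$ on the nose, so every off-diagonal inner product equals $-\alpha$. I would compute
\begin{equation*}
\Bigl\| \sum_{i=1}^{k} p_i \Bigr\|^2 = \sum_{i,j=1}^{k} \langle p_i, p_j\rangle = k + k(k-1)(-\alpha) = k\bigl(1-(k-1)\alpha\bigr) = 0,
\end{equation*}
so $\sum_{i=1}^k p_i = 0$: the centroid of the $p_i$ is the origin. Hence the origin lies in the affine hull of $\{p_1,\dots,p_k\}$, so that affine hull is a linear subspace and therefore coincides with the $(k-1)$-dimensional linear span of the $p_i$; a family of $k$ points whose affine hull has dimension $k-1$ is affinely independent, so the $p_i$ are the vertices of a simplex, which is regular since all pairwise distances equal $\|p_i-p_j\| = \sqrt{2+2\alpha}$. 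Identifying the span with $\mathbb R^{k-1}$ gives the claimed $k$-simplex.

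The only delicate point — the ``hard part'', such as it is — is that very last step: one cannot pass directly from ``$\op{rank} G(p_1,\dots,p_k) = k-1$'' to ``the $p_i$ form a simplex'', since $k$ points spanning a $(k-1)$-dimensional linear subspace need not be affinely independent. It is the vanishing of $\sum p_i$, which forces the linear span to coincide with the affine hull, that rescues the argument; everything else is routine linear algebra once one observes that switching equivalence is an orthogonal conjugation.
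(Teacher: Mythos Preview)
Your proof is correct and complete. Note, however, that the paper does not supply its own proof of this proposition: it is stated with a citation to \cite{lemmens1973}, Section~4, and used as background. So there is nothing to compare against within the paper itself; your spectral argument --- diagonalizing $(1+\alpha)I - \alpha J$, observing that switching equivalence is orthogonal conjugation, and reading off positive (semi)definiteness from the sign of $1-(k-1)\alpha$ --- is the standard one, and your handling of the simplex claim via $\sum_i p_i = 0$ is clean and correct.
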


Under a suitable choice of signs, the vectors $\pm p_1$, \dots, $\pm p_k$ from an equiangular set $X$ will form a $k$-clique in its Seidel graph.
Following~\cite{lemmens1973}, we will define two important notions that are associated to an equiangular set $X$
(Definitions~\ref{defn:base-size} and \ref{defn:K-base}).

\begin{defn}[\cite{lemmens1973}]
\label{defn:base-size}
Let $X$ be an equiangular set in $\mathbb R^r$ with angle $\alpha$.
The \emph{base size} of $X$, denoted by $K(X)$, is defined as
\begin{equation*}
K(X) := \max \{ k \in \mathbb N \colon \text{there exist $p_1$, \dots, $p_k$ in $X$ such that $G(p_1, \dots, p_k) \simeq (1 + \alpha) I - \alpha J$} \}.
\end{equation*}
In other words, $K(X)$ is the maximum of the clique numbers of Seidel graphs that are switching equivalent to that of $X$.
\end{defn}

Note that the clique numbers of Seidel graphs in the switching class of $X$ are not constant, therefore we need to take their maximum.
Nevertheless $K(X)$ is always bounded by $\frac{1}{\alpha} + 1$ by Proposition~\ref{prop:k-a1}.
Since we are interested in large equiangular sets, we will assume that $\frac{1}{\alpha}$ is an odd integer, thanks to Theorem~\ref{thm:neumann}.
The following proposition states that the only meaningful range of base size is $2, 3, \dots, \frac{1}{\alpha} + 1$.

\begin{prop}[\cite{king2016}, Proposition~3]
Let $X$ be an equiangular set in $\mathbb R^r$.  
If $|X| \geq 2$, then $K(X) \geq 2$.
\end{prop}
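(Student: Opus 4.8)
The plan is to produce, directly from the definition of base size, two vectors in $X$ whose Gram matrix is switching equivalent to $(1+\alpha) I_2 - \alpha J_2$. By Definition~\ref{defn:base-size} this single witness already forces $K(X) \geq 2$, so no optimization over the switching class is needed beyond exhibiting one convenient representative. Since $|X| \geq 2$, I may fix two distinct vectors $p_1, p_2 \in X$; their Gram matrix is
\[
G(p_1, p_2) = \begin{bmatrix} 1 & \langle p_1, p_2 \rangle \\ \langle p_1, p_2 \rangle & 1 \end{bmatrix},
\]
and the equiangularity condition~(\ref{eq:equi-a}) tells us $\langle p_1, p_2 \rangle \in \{-\alpha, \alpha\}$.

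I then split into two cases. If $\langle p_1, p_2 \rangle = -\alpha$, then $G(p_1, p_2) = (1+\alpha) I_2 - \alpha J_2$ on the nose, and taking $B = C = I_2$ in Proposition~\ref{prop:switching-equivalent} gives $G(p_1, p_2) \simeq (1+\alpha) I_2 - \alpha J_2$. If instead $\langle p_1, p_2 \rangle = \alpha$, I switch at $p_1$: with $B = \op{diag}(-1,1)$ and $C = I_2$, the matrix $(CB)^{\ol{T}} G(p_1, p_2) (CB) = B\, G(p_1, p_2)\, B$ has off-diagonal entry $-\langle p_1, p_2 \rangle = -\alpha$, hence again equals $(1+\alpha) I_2 - \alpha J_2$. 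In either case the pair $p_1, p_2$ satisfies the defining condition of $K(X)$ for $k = 2$, so $K(X) \geq 2$. (When $\alpha = 0$ both matrices degenerate to $I_2$ and the argument is unchanged.)

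There is essentially no obstacle here: the content of the statement is just that one can always normalize the sign of a single inner product by a switching. The only conceptual point worth flagging is the one already emphasized after Definition~\ref{defn:base-size}, namely that $K(X)$ is a maximum over the whole switching class, so it suffices to find one representative of the Seidel graph containing an edge (equivalently, a $2$-clique), which the switching above guarantees.
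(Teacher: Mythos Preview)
Your proof is correct and is essentially the same as the paper's: the paper simply notes that if two vertices in the Seidel graph are independent, one switches at one of them to create a $2$-clique. You have written out the same argument in slightly more detail using the Gram matrix and the explicit switching matrix $B = \op{diag}(-1,1)$.
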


\begin{proof}
If two vertices in the Seidel graph $S(X)$ are independent,
then we switch one of the them to form a $2$-clique.
\end{proof}

\begin{defn}[\cite{lemmens1973}]
\label{defn:K-base}
Let $X$ be an equiangular set with angle $\alpha$ and base size $K$.
A set of $K$ vectors $p_1$, \dots, $p_K$ is called a \emph{$K$-base} of $X$ if $p_1$, \dots, $p_k$ belong to some set which is switching equivalent to $X$, and $G(p_1, \dots, p_K) = (1 + \alpha) I - \alpha J$.
\end{defn}

Let $K$ be the base size of an equiangular set $X$.
We will fix a $K$-base $P = \{ p_1, \dots, p_K \}$ that forms a $K$-clique in the Seidel graph of $X$.
Now we introduce the \emph{pillar decomposition} of $X$ with respect to $P$, following~\cite{lemmens1973}.
(More details can also be found in~\cite{king2016}.)

For each vector $x \in X \setminus P$, there is a $(1,-1)$-vector $\varepsilon(x) \in \mathbb R^K$ such that
\begin{equation*}
	\bigl( \langle x, p_1 \rangle, \dots, \langle x, p_K \rangle \bigr) 
    = \alpha \cdot \varepsilon(x).
\end{equation*}
A vector $x$ in $X$ will be replaced by $-x$ if $\varepsilon(x)$ has more positive entries than $\varepsilon(-x)$, or $\varepsilon(x)$ has the same number of positive entries as $\varepsilon(-x)$ and $\langle x, p_K \rangle = \alpha$; otherwise the vector $x$ stays put.

Let $\Sigma(\varepsilon(x))$ denote the number of positive entries in $\varepsilon(x)$.
A \emph{pillar} (with respect to a $K$-base $P$) containing a vector $x \in X \setminus P$, denoted by $\bar{x}$, is the subset of vectors $x' \in X \setminus P$ such that $\varepsilon(x') = \varepsilon(x)$; $\bar{x}$ is called a $(K,n)$ pillar when $\Sigma(\varepsilon(x)) = n$.
Thus the vectors in $X \setminus P$ are partitioned into several $(K,n)$ pillars for $1 \leq n \leq \lfloor \frac{K}{2} \rfloor$.
The number of different $(K,n)$ pillars is at most $\binom{K}{n}$ when $1 \leq n < \frac{K}{2}$, but is at most $\frac{1}{2} \binom{K}{K/2}$ when $n = \frac{K}{2}$.
However, if $K = \frac{1}{\alpha} + 1$, then $p_1, \dots, p_K$ form a $K$-simplex and $\sum_{i=1}^K p_i = 0$.
Therefore $\varepsilon(x)$ has the same number of positive entries as negative entries, thus only $(K, \frac{K}{2})$ pillars can exist.
The collection of all $(K,n)$ pillars in an equiangular set $X$ will be denoted by $X(K,n)$.


The following fact will be used in many occasions.
\begin{prop}
\label{prop:K1-indep}
Let $X$ be an equiangular set with angle $\alpha$ and base size $K$,
and $P = \{ p_1, \dots, p_K \}$ be a $K$-base.
If two vectors $x, y$ belong to the same $(K,1)$ pillar with respect to $P$,
then $\langle x, y \rangle = \alpha$.
\end{prop}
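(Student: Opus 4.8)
The plan is to argue by contradiction and to exhibit, inside $X$ itself, a clique of size $K+1$ in the Seidel graph; by Definition~\ref{defn:base-size} this would force $K(X)\ge K+1$ and contradict the hypothesis that the base size of $X$ equals $K$.

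Since $X$ is equiangular and $x$, $y$ are distinct vectors of $X$, we have $\langle x,y\rangle\in\{-\alpha,\alpha\}$, so it suffices to rule out $\langle x,y\rangle=-\alpha$; assume this for contradiction. Lying in the same $(K,1)$ pillar means $\varepsilon(x)=\varepsilon(y)$, and this common $(1,-1)$-vector has exactly one positive coordinate. Relabeling the members of the $K$-base $P$ (a harmless operation, since the labeling of $P$ plays no role), we may assume that positive coordinate is the first, so that $\langle x,p_1\rangle=\langle y,p_1\rangle=\alpha$ while $\langle x,p_i\rangle=\langle y,p_i\rangle=-\alpha$ for $2\le i\le K$.

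Now consider the $K+1$ vectors $p_2,\dots,p_K,x,y$. They are pairwise distinct, since the $p_i$ are distinct elements of $P$ whereas $x$ and $y$ are distinct elements of $X\setminus P$, hence different from every $p_i$. Their pairwise inner products are all equal to $-\alpha$: for a pair $\{p_i,p_j\}$ with $2\le i<j\le K$ this is the defining property of a $K$-base (Definition~\ref{defn:K-base}); for a pair $\{x,p_i\}$ or $\{y,p_i\}$ with $i\ge 2$ it is the normalization from the previous paragraph; and for the pair $\{x,y\}$ it is our contradiction hypothesis. Therefore $G(p_2,\dots,p_K,x,y)=(1+\alpha)I_{K+1}-\alpha J_{K+1}$, so these $K+1$ vectors of $X$ witness $K(X)\ge K+1$, a contradiction. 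Hence $\langle x,y\rangle=\alpha$.

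I expect no genuine obstacle in this argument; the only step deserving attention is the relabeling of $P$, which is legitimate precisely because $x$ and $y$ carry the same sign pattern $\varepsilon$ — that is exactly what it means for them to belong to a common pillar. (This is also consistent with the pillar decomposition: when $K=\tfrac1\alpha+1$ no $(K,1)$ pillar exists, so the statement has content only for $K\le\tfrac1\alpha$, for which the conclusion $K(X)\ge K+1$ of the contradiction does not clash with the bound in Proposition~\ref{prop:k-a1}.)
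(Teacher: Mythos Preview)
Your proof is correct and follows essentially the same approach as the paper: both argue by contradiction and observe that if $\langle x,y\rangle=-\alpha$, then $x$, $y$, together with the $K-1$ base vectors $p_i$ to which both are adjacent in the Seidel graph, form a $(K+1)$-clique, contradicting $K(X)=K$. Your version is simply more explicit (writing out the Gram matrix and the sign pattern), while the paper phrases the same idea in terms of adjacency in $S(X)$.
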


\begin{proof}
By definition of $x$ and $y$ being in the same $(K,1)$ pillar,
there are $K-1$ vectors in $P$ to which both $x$ and $y$ are adjacent in the Seidel graph $S(X)$ of $X$.
If $x$ and $y$ are also adjacent to each other in $S(X)$,
$x$ and $y$ together with those $K-1$ vectors that they are connected to form a $(K+1)$-clique in $S(X)$,
which contradicts to the definition of the base size $K = K(X)$.
Hence there is no edge connecting $x$ and $y$ in $S(X)$,
which is equivalent of saying that $\langle x, y \rangle = \alpha > 0$.
\end{proof}


\section{Schur decomposition for symmetric positive semidefinite matrices}
\label{sec:schur}

In checking a matrix being positive (semi-)definite, we use the Schur decomposition.

\begin{thm}[Schur decomposition~\cite{boyd2004convex}]
\label{thm:schur-decomp}
Let $M$ be a symmetric real matrix, given by blocks
\begin{equation*}
M = \begin{bmatrix}
A & B \\ B^{\ol{T}} & C
\end{bmatrix}
\end{equation*}
Suppose that $A$ is positive definite.
Then $M$ is positive (semi-)definite if and only if $C - B^{\ol{T}} A^{-1} B$ is positive (semi-)definite.
\end{thm}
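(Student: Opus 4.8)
The plan is to exhibit an explicit block factorization of $M$ that is a \emph{congruence} transformation, and then invoke the fact that congruence by an invertible matrix preserves both positive semidefiniteness and positive definiteness, using the hypothesis that $A$ is positive definite to strip off the $A$-block.

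First I would record the algebraic identity
\begin{equation*}
M = \begin{bmatrix} I & 0 \\ B^{\ol{T}} A^{-1} & I \end{bmatrix}
\begin{bmatrix} A & 0 \\ 0 & C - B^{\ol{T}} A^{-1} B \end{bmatrix}
\begin{bmatrix} I & A^{-1} B \\ 0 & I \end{bmatrix},
\end{equation*}
which is verified by multiplying out the right-hand side, using that $A$ is invertible (as it is positive definite) and that $A$ is symmetric so $A^{-\ol{T}} = A^{-1}$. Writing $L = \begin{bmatrix} I & 0 \\ B^{\ol{T}} A^{-1} & I \end{bmatrix}$ and $S = C - B^{\ol{T}} A^{-1} B$ for the Schur complement, the identity becomes $M = L \cdot \operatorname{diag}(A, S) \cdot L^{\ol{T}}$. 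Since $L$ is block lower-triangular with identity blocks on the diagonal, it is invertible, so $M$ and $\operatorname{diag}(A,S)$ are congruent.

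Next I would use this congruence to transfer definiteness. For any vector $w$, setting $v = L^{\ol{T}} w$ and splitting $v = (v_1, v_2)$ according to the block sizes of $A$ and $C$ gives $w^{\ol{T}} M w = v^{\ol{T}} \operatorname{diag}(A, S) v = v_1^{\ol{T}} A v_1 + v_2^{\ol{T}} S v_2$, and $w \mapsto v$ is a bijection. Hence $M$ is positive (semi-)definite if and only if $\operatorname{diag}(A, S)$ is. Because $A$ is positive definite, $\operatorname{diag}(A,S) \succeq 0$ is equivalent to $v_2^{\ol{T}} S v_2 \geq 0$ for all $v_2$, i.e.\ to $S \succeq 0$; and $\operatorname{diag}(A,S) \succ 0$ is equivalent to $S \succ 0$ (for necessity take $v_1 = 0$ and $v_2 \neq 0$; for sufficiency split into the cases $v_2 = 0$, where $v_1 \neq 0$ and $v_1^{\ol{T}} A v_1 > 0$, and $v_2 \neq 0$, where $v_2^{\ol{T}} S v_2 > 0$). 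This yields both directions of the claimed equivalence simultaneously.

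The computation is entirely routine; the only point needing a little care is the bookkeeping that makes the semidefinite and the definite statements fall out of the same argument — in particular, in the definite direction one must not forget the case $v_1 \neq 0$, $v_2 = 0$, which is exactly where positive \emph{definiteness} of $A$ (not merely semidefiniteness) is used. Equivalently, one can avoid block matrices and complete the square directly on the quadratic form, $v^{\ol{T}} M v = x^{\ol{T}} A x + 2 x^{\ol{T}} B y + y^{\ol{T}} C y = (x + A^{-1}By)^{\ol{T}} A (x + A^{-1}By) + y^{\ol{T}} S y$: taking $x = -A^{-1} B y$ shows $S \succeq 0$ (resp.\ $S \succ 0$) is necessary, and nonnegativity of both summands shows it is sufficient.
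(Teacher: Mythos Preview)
Your proof is correct and is the standard congruence / completing-the-square argument for the Schur complement criterion. Note, however, that the paper does not actually give its own proof of this theorem: it is stated with a citation to \cite{boyd2004convex} and then used as a tool, so there is nothing in the paper to compare your argument against. Your write-up would serve perfectly well as a self-contained justification should one be wanted.
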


Let $X$ be an equiangular set with angle $\alpha = \frac{1}{(2n+1)}$ and base size $K = K(X) = \frac{1+3\alpha}{2\alpha} = n+2$ in $\mathbb R^r$.
The reason for this particular combination of $\alpha$ and $K$ will be clear soon.
Let $P = \{ p_1, \dots, p_K \}$ be a $K$-base of $X$, $\Gamma$ be the subspace spanned by $P$, and $\Gamma^\perp$ be the orthogonal complement of $\Gamma$ in $\mathbb R^r$.
For the vectors $x_1, x_2 \in X \setminus P$ belonging to the same $(K,1)$ pillar,
let $x_1 = h + c_1$, $x_2 = h + c_2$ be their pillar decomposition,
that is, $h \in \Gamma$, and $c_1, c_2 \in \Gamma^\perp$.
As $h$ is a linear combination of $p_1, \dots, p_K$, we can write $h = \sum_{i=1}^K c_i p_i$ for some unknown coefficients $c_1, \dots, c_K$.  Since $x_1$ belongs to a $(K,1)$ pillar, there is an index $k_0 \in \{1,\dots, K\}$ such that
\begin{equation}
\label{eq:k1-inner}
\langle x, p_k \rangle = \langle h, p_k \rangle = \left\{
\begin{array}{ll}
\alpha, & \text{if } k = k_0; \\
-\alpha, & \text{if } k \neq k_0.
\end{array}
\right.
\end{equation}
Rewriting (\ref{eq:k1-inner}) as a matrix equation, we see that
\begin{equation}
\label{eq:k1-inner-matrix-eq}
G \cdot \begin{bmatrix}
c_1 \\ \vdots \\ c_K 
\end{bmatrix} = \alpha \cdot \bigl( 2 e_{k_0} -\sum_{i=1}^K e_i  \bigr),
\end{equation}
where $G = G(P) = (1 + \alpha) I - \alpha J$ is the Gram matrix for $P$, and $\{ e_1, \dots, e_K \}$ is the standard orthonormal basis for $\mathbb R^K$.
Since $G$ is positive and invertible, we compute
\begin{equation*}
G^{-1} = \frac{1}{1 + \alpha} I + \frac{\alpha}{(1+\alpha)(1+\alpha-K\alpha)} J.
\end{equation*}
Hence by (\ref{eq:k1-inner-matrix-eq}) we obtain that
\begin{equation*}
c_k = \left\{
\begin{array}{ll}
0,           & \text{if } k = k_0, \\
-(K-1)^{-1}, & \text{if } k \neq k_0;
\end{array}
\right.
\end{equation*}
that is,
\begin{equation*}
h = \frac{-1}{K-1} \bigl( \sum_{i=1}^K p_i - p_{k_0} \bigr).
\end{equation*}
From this expression we conclude that $\langle h, h \rangle = \alpha$.
Since $\langle x_1, x_2 \rangle = \alpha$ by Proposition~\ref{prop:K1-indep}, we conclude that $\langle \hat{c}_1, \hat{c}_2 \rangle = 0$, that is, the $c$-vectors within a single $(K,1)$ pillar are orthogonal.
(The orthogonality condition among the $c$-vectors does not hold for any other combinations of $\alpha$ and $K$.)

\begin{thm}
\label{thm:2vec}
Let $n$ be a positive integer with $n \geq 2$, and $\alpha = \frac{1}{(2n+1)}$.
Let $X$ be an equiangular set with angle $\alpha$ and base size $K = n+2$ in $\mathbb R^r$,
and we fix a base $P = \{ p_1, \dots, p_K \}$ for $X$.
If there is a $(K,1)$ pillar with at least two vectors,
then for any other $(K,1)$ pillar $\bar{x}$,
\begin{equation*}
|\bar{x}| \leq \left\{
\begin{array}{ll}
2n^2 (n+1), & \text{if $n \leq 3$}; \\
\frac12 n^2 (n+1)^2, & \text{if $n \geq 3$}.
\end{array}
\right.
\end{equation*}
\end{thm}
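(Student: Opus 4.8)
The plan is to play the orthogonality of the $c$-vectors inside a single $(K,1)$ pillar (established in the discussion preceding the theorem) against Bessel's inequality, applied to two carefully chosen linear combinations of a pair of $c$-vectors drawn from the large pillar.

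First I would fix a $(K,1)$ pillar $\bar y$ with $|\bar y|\ge 2$, pick distinct $y_1,y_2\in\bar y$, and let $\bar x$ be any other $(K,1)$ pillar, which may be assumed nonempty. Write the pillar decompositions $y_i=h_y+d_i$ and $x=h_x+c_x$ for $x\in\bar x$, with $h_x,h_y\in\Gamma$ and $c_x,d_i\in\Gamma^\perp$. From the computation preceding the theorem, $\|h_x\|^2=\|h_y\|^2=\alpha$, hence $\|c_x\|^2=\|d_i\|^2=1-\alpha>0$, and moreover $\{\hat d_1,\hat d_2\}$ is an orthonormal pair in $\Gamma^\perp$ while $\{\hat c_x:x\in\bar x\}$ is an orthonormal set. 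The one new ingredient I need is the cross term $\langle h_x,h_y\rangle$: since $\bar x$ and $\bar y$ are distinct $(K,1)$ pillars, their unique positive coordinates occur at two different indices, and feeding $h=\frac{-1}{K-1}\bigl(\sum_{i=1}^K p_i-p_{k_0}\bigr)$ into the Gram relations for $P$ gives $\langle h_x,h_y\rangle=\frac{n-1}{n+1}\alpha$. Since $\langle x,y_j\rangle=\langle h_x,h_y\rangle+\langle c_x,d_j\rangle\in\{\pm\alpha\}$, normalizing then forces $\langle\hat c_x,\hat d_j\rangle=\frac{1}{n(n+1)}$ when $\langle x,y_j\rangle=\alpha$ and $\langle\hat c_x,\hat d_j\rangle=\frac{-1}{n+1}$ when $\langle x,y_j\rangle=-\alpha$.

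Next I would partition $\bar x$ into four classes by the pair $(\operatorname{sgn}\langle x,y_1\rangle,\operatorname{sgn}\langle x,y_2\rangle)$, with sizes $N_{++},N_{+-},N_{-+},N_{--}$, and set $v=N_{+-}+N_{-+}$, so that $|\bar x|=N_{++}+N_{--}+v$. Applying Bessel's inequality to the orthonormal family $\{\hat c_x\}$ tested against $\hat d_1-\hat d_2$ (which has $\|\hat d_1-\hat d_2\|^2=2$) kills the two ``pure'' classes and leaves $\langle\hat c_x,\hat d_1-\hat d_2\rangle=\pm\frac1n$ on the two ``mixed'' classes, so $\frac{v}{n^2}\le2$, i.e.\ $v\le2n^2$. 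Testing instead against $\hat d_1+\hat d_2$ (again of squared norm $2$) and clearing denominators yields
\[
N_{++}+n^2N_{--}+\tfrac{(n-1)^2}{4}\,v\le\tfrac{n^2(n+1)^2}{2}.
\]

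Finally, since $n\ge2$ I may drop the factor $n^2$ in front of $N_{--}$, obtaining $N_{++}+N_{--}\le\frac{n^2(n+1)^2}{2}-\frac{(n-1)^2}{4}v$ and hence $|\bar x|\le\frac{n^2(n+1)^2}{2}+\bigl(1-\frac{(n-1)^2}{4}\bigr)v$. For $n\ge3$ the coefficient $1-\frac{(n-1)^2}{4}$ is $\le0$ while $v\ge0$, so $|\bar x|\le\frac12 n^2(n+1)^2$; for $n=2$ the coefficient is $\frac34$ and $v\le2n^2=8$, giving $|\bar x|\le18+6=24=2n^2(n+1)$ (and the two bounds agree at $n=3$, both equal to $72$). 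I expect the only real work is the bookkeeping in the two inner-product computations, $\langle h_x,h_y\rangle$ and the resulting values of $\langle\hat c_x,\hat d_j\rangle$; the one genuinely clever step is the choice of test vectors $\hat d_1\pm\hat d_2$, where the difference isolates the mixed classes to control $v$ and the sum then bounds the two pure classes.
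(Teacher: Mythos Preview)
Your argument is correct.  Both your proof and the paper's rest on the same underlying fact---that the $\hat c_x$'s form an orthonormal system while the two $\hat d_j$'s are another orthonormal pair with prescribed inner products against each $\hat c_x$---but the executions diverge.  The paper forms the full Gram matrix of $\{\hat c_1,\dots,\hat c_N,\hat d_1,\hat d_2\}$, applies the Schur complement to reduce to a $2\times 2$ positive semidefinite condition $M\succcurlyeq 0$, expands $\operatorname{tr} M$ and $\det M$ in the four counts $\ell_{ij}$, argues by inspection that one may set $\ell_{22}=0$ and $\ell_{12}=\ell_{21}$, factors $\det M$, and then solves a two-variable linear program.  You instead test Bessel's inequality against the vectors $\hat d_1\pm\hat d_2$; this is exactly evaluating $a^{\mathsf T}Ma\ge 0$ at $a=(1,\pm 1)$, and it delivers directly the two linear constraints $v\le 2n^2$ and $N_{++}+n^2N_{--}+\tfrac{(n-1)^2}{4}v\le\tfrac{n^2(n+1)^2}{2}$ that in the paper only emerge after the determinant factorisation.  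Your route is shorter and avoids both the quadratic constraint and the LP; the paper's Schur-complement framework, on the other hand, is what gets reused (with four $d$-vectors instead of two) in the proof of Lemma~\ref{lem:two-X31}, so it earns its keep later.  One small observation: since the two choices $a=(1,\pm 1)$ diagonalise the symmetrised problem, your two Bessel inequalities together are equivalent to the diagonal entries of $M$ in that rotated basis being nonnegative, which is precisely why they coincide with the two linear factors the paper finds in $\det M$.
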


\begin{proof}
Let us look at the situation where two vectors come from different pillars.
Suppose that $x = h_1 + c_1$ and $u = h_2 + c_2$ in $X$ belong to distinct $(K,1)$ pillars.
Because the Hamming distance of $\varepsilon(x)$ and $\varepsilon(u)$ is $2$, we have
\begin{equation*}
\langle h_1, h_2 \rangle = \frac{n-1}{(n+1)(2n+1)}.
\end{equation*}
Therefore
\begin{equation*}
\langle \hat{c}_1, \hat{c}_2 \rangle = \frac{\langle x, u \rangle - \langle h_1, h_2 \rangle}{ \| c \|^2 }
= \frac{ \pm \frac{1}{2n+1} - \frac{n-1}{(n+1)(2n+1)} }{1 - \frac{1}{2n+1}} 
= \frac1{n(n+1)}, -\frac{1}{n+1}.
\end{equation*}

Now suppose that the pillar $\bar{u}$ contains two vectors $u_1, u_2$, and $\bar{x}$ contains $N$ vectors $x_1, \dots, x_N$.
Let $x_i = h_1 + c_i$ and $u_i = h_2 + d_i$ be their pillar decomposition.
Then the Gram matrix of $\{ \hat{c}_1, \dots, \hat{c}_N, \hat{d}_1, \hat{d}_2 \}$ has the following form:
\begin{equation*}
G = G( \hat{c}_1, \dots, \hat{c}_N, \hat{d}_1, \hat{d}_2 ) =
\begin{bmatrix}
\\
& I_N & & v_1 & v_2 \\
\\
& v_1^{\ol{T}} & & 1 & 0 \\
& v_2^{\ol{T}} & & 0 & 1
\end{bmatrix},
\end{equation*}
where $v_1$ and $v_2$ are vectors in $\mathbb R^N$ with entries in $\{ \frac{1}{n(n+1)}, \frac{-1}{n+1} \}$.
Let us assume that in $\bar{x}$, 
\begin{itemize}
\item there are $\ell_{11}$ vectors $x$ such that $\langle x, u_1 \rangle = \alpha$, $\langle x, u_2 \rangle = \alpha$;
\item there are $\ell_{12}$ vectors $x$ such that $\langle x, u_1 \rangle = \alpha$, $\langle x, u_2 \rangle = -\alpha$;
\item there are $\ell_{21}$ vectors $x$ such that $\langle x, u_1 \rangle = -\alpha$, $\langle x, u_2 \rangle = \alpha$;
\item there are $\ell_{22}$ vectors $x$ such that $\langle x, u_1 \rangle = -\alpha$, $\langle x, u_2 \rangle = -\alpha$.
\end{itemize}
Certainly $\ell_{11} + \ell_{12} + \ell_{21} + \ell_{22} = N$.
It follows that
\begin{align*}
\langle v_1, v_1 \rangle &= \frac{\ell_{11} + \ell_{12}}{n^2(n+1)^2} + \frac{\ell_{21}+\ell_{22}}{(n+1)^2}; \\
\langle v_2, v_2 \rangle &= \frac{\ell_{11} + \ell_{21}}{n^2(n+1)^2} + \frac{\ell_{12}+\ell_{22}}{(n+1)^2}; \\
\langle v_1, v_2 \rangle = \langle v_2, v_1 \rangle &= \frac{\ell_{11}}{n^2(n+1)^2} - \frac{\ell_{12}+\ell_{21}}{n(n+1)^2} + \frac{\ell_{22}}{(n+1)^2}.
\end{align*}
Since the Gram matrix $G$ is positive semidefinite, the following $2 \times 2$ matrix is also positive semidefinite by Theorem~\ref{thm:schur-decomp}:
\begin{equation*}
M := \begin{bmatrix}
1 & 0 \\ 0 & 1 
\end{bmatrix} - \begin{bmatrix}
v_1^{\ol{T}} \\ v_2^{\ol{T}}
\end{bmatrix} I_N^{-1} \begin{bmatrix}
v_1 & v_2
\end{bmatrix}
= \begin{bmatrix}
1 - \langle v_1, v_1 \rangle & - \langle v_1, v_2 \rangle \\
- \langle v_2, v_1 \rangle & 1 - \langle v_2, v_2 \rangle 
\end{bmatrix} \succcurlyeq 0.
\end{equation*}
Because $M$ is symmetric, $M$ is positive semidefinite if and only if $\op{tr} M \geq 0$ and $\det M \geq 0$.
We compute
\begin{align}
\label{eq:trM}
\frac{n^2 (n+1)^2}{2} \op{tr} M &= n^2 (n+1)^2 - \bigl(\ell_{11} + \frac{n^2 + 1}{2} (\ell_{12} + \ell_{21}) + n^2 \ell_{22} \bigr); \\
n^4 (n+1)^4 \det M &= \det \bigl( n^2 (n+1)^2 M \bigr) \nonumber \\
&= n^4 (n+1)^4 - n^2 (n+1)^2 \bigl( 2 (\ell_{11} + n^2 \ell_{22}) + (n^2+1) (\ell_{12} + \ell_{21}) \bigr) \label{eq:detM} \\
&\phantom{= } + (\ell_{11} + n^2 \ell_{22} + \ell_{12} + n^2 \ell_{21})
(\ell_{11} + n^2 \ell_{22} + \ell_{21} + n^2 \ell_{12} )  \nonumber \\
&\phantom{= } - \bigl( \ell_{11} + n^2 \ell_{22} - n (\ell_{12} + \ell_{21}) \bigr)^2.  \nonumber
\end{align}
Keep in mind that we want to maximize $N = \ell_{11} + \ell_{12} + \ell_{21} + \ell_{22}$ subject to $\op{tr} M \geq 0$, $\det M \geq 0$, and the variables $\ell_{ij}$ are all non-negative integers.
If we look closely to (\ref{eq:trM}) and (\ref{eq:detM}), the terms $\ell_{11} + n^2 \ell_{22}$ always appear as a pair, and there is no other separate term for $\ell_{11}$ and $\ell_{22}$; as a result, the sum $\ell_{11} + \ell_{22}$ is maximized when $\ell_{22} = 0$.
Henceforth we let $\ell_{22} = 0$ and continue the computation from (\ref{eq:detM}):
\begin{align}
\nonumber
n^4 (n+1)^4 \det M &= n^4 (n+1)^4 - n^2 (n+1)^2 \bigl( 2 \ell_{11} + (n^2 + 1)(\ell_{12} + \ell_{21}) \bigr) \\
\nonumber 
&\phantom{= }
+ (\ell_{11} + \ell_{12} + n^2 \ell_{21})(\ell_{11} + \ell_{21} + n^2 \ell_{12}) 
- \bigl( \ell_{11} - n (\ell_{12} + \ell_{21}) \bigr)^2 \\
\label{eq:detM2}
&= 
n^4 (n+1)^4 - n^2 (n+1)^2 \bigl( 2 \ell_{11} + (n^2 + 1)(\ell_{12} + \ell_{21}) \bigr) \\
\nonumber 
&\phantom{= }
+ (n + 1)^2 \ell_{11} (\ell_{12} + \ell_{21}) + (n^2 - 1)^2 \ell_{12} \ell_{21}.
\end{align}
The expressions and (\ref{eq:trM}) and (\ref{eq:detM2}) are symmetric with respect to $\ell_{12}$ and $\ell_{21}$, and if the sum $\ell_{12} + \ell_{21}$ is fixed, (\ref{eq:detM2}) is maximized when $\ell_{12} = \ell_{21}$ by the A.M.-G.M. inequality.
So we set $s = \ell_{11}$ and $t = \ell_{12} = \ell_{21}$ and continue the computation:
\begin{align*}
n^4 (n+1)^4 \det M &= n^4 (n+1)^4 - 2n^2 (n+1)^2 (s + (n^2+1) t) + 2(n+1)^2 st + (n^2-1)^2 t^2 \\
&= (n+1)^2 (n^2 - t) \bigl( n^2 (n+1)^2 - 2s - (n-1)^2 t \bigr).
\end{align*}
Therefore the problem becomes
\begin{equation}
\label{eq:LP}
\begin{array}{rl}
\text{to maximize} & N = s + 2t \\ \\
\text{subject to} & \left\{ \begin{array}{l}
s, t \in \mathbb Z, \quad s, t \geq 0, \\
n^2 (n+1)^2 - s - (n^2 + 1) t \geq 0, \\
(n^2 - t) \bigl( n^2 (n+1)^2 - 2s - (n-1)^2 t \bigr) \geq 0.
\end{array}
\right.
\end{array}
\end{equation}

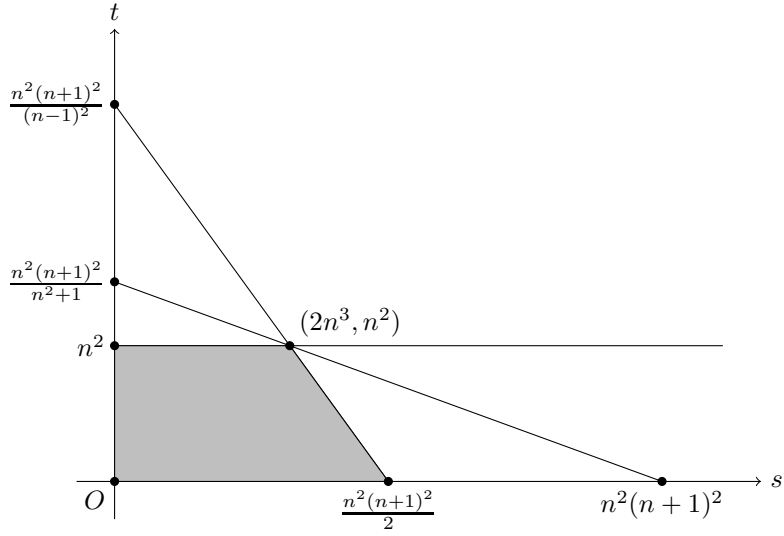
\begin{figure}
\centering
\begin{tikzpicture}
\draw [->] (-0.5, 0) -- (8.5, 0) node [right] {$s$};
\draw [->] (0, -0.5) -- (0, 6) node [above] {$t$};
\draw (0, 1.8) -- (8, 1.8);
\draw (0, 2.64705882352941) -- (7.2, 0);
\draw (0, 5) -- (3.6, 0);
\coordinate (AA) at (0, 1.8);
\coordinate (BB) at (2.304, 1.8);
\coordinate (CC) at (3.6, 0);
\draw [fill=gray!50!white] (0,0) -- (CC) -- (BB) -- (AA) -- cycle;
\draw [fill] (0,0) circle (1.5pt) node [below left ] {$O$};
\draw [fill] (CC) circle (1.5pt) node [below] {$\frac{n^2(n+1)^2}{2}$};
\draw [fill] (7.2,0) circle (1.5pt) node [below] {$n^2(n+1)^2$};
\draw [fill] (AA) circle (1.5pt) node [left] {$n^2$};
\draw [fill] (0, 2.64705882352941) circle (1.5pt) node [left] {$\frac{n^2(n+1)^2}{n^2+1}$};
\draw [fill] (0, 5)  circle (1.5pt) node [left] {$\frac{n^2(n+1)^2}{(n-1)^2}$};
\draw [fill] (BB) circle (1.5pt) node [above right] {$(2n^3, n^2)$};
\end{tikzpicture}
\caption{The feasible domain for the linear programming problem (\ref{eq:LP})}
\label{fig:LP}
\end{figure}

This is a standard problem in linear programming, 
whose feasible domain is shaded in Figure~\ref{fig:LP}.
We solve the problem and write the maximum $N_0$ of $N$ as
\begin{equation*}
N_0 = \left\{
\begin{array}{lll}
2n^2(n+1), & \text{achieved at $(s,t) = (2n^3, n^2)$}, & \text{when $n \leq 3$}; \\
\frac{1}{2} n^2 (n+1)^2, & \text{achieved at $(s,t) = (\frac{1}{2} n^2 (n+1)^2, 0)$}, & \text{when $n \geq 3$}.
\end{array}
\right.
\end{equation*}
The proof is now completed.
\end{proof}

\noindent{\bf Example.} For $n = 3$, we are looking at the angle $\alpha = \frac{1}{7}$ and the base size $K = 5$.
By Theorem~\ref{thm:2vec}, if there is a $(5,1)$ pillar with two or more vectors, then the size of another $(5,1)$ pillars is bounded by $72$.  
This maximum is achieved in two ways: the quadruple $(\ell_{11}, \ell_{12}, \ell_{21}, \ell_{22})$ defined in the proof of the theorem can be $(72, 0, 0, 0)$ or $(54, 9, 9, 0)$.

\medskip

\noindent{\bf Remark.} Following the proof of their Lemma~16, King and Tang \cite{king2016} proved that $|\bar{x}| \leq n^2 (n+1)^2$ for a $(K,1)$ pillar $\bar{x}$ if there is another nonempty $(K,1)$ pillar.
Theorem~\ref{thm:2vec} cuts their bound by half.


\section{The Lemmens-Seidel conjecture}
\label{sec:a5}

Throughout this section we assume that the common angle is $\alpha = \frac{1}{5}$.
Let us first recall a theorem in \cite{lemmens1973}.

\begin{thm}[\cite{lemmens1973}, Theorem~5.7]
  Any set of unit vectors with inner product $\pm \frac{1}{5}$ in $\mathbb R^r$, which contains $6$ unit vectors with inner product $-\frac{1}{5}$, has maximum cardinality $276$ for $23 \leq r \leq 185$, $\lfloor \frac{1}{2} (r-5) \rfloor + r + 1$ for $r \geq 185$.
\end{thm}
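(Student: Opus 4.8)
The plan is to run a pillar analysis with respect to a $6$-base, in the same spirit as the treatment of $K=3,5$. The six unit vectors with pairwise inner product $-\tfrac15$ give, after switching, a $6$-clique in the Seidel graph; since $\tfrac1\alpha+1=6$, Proposition~\ref{prop:k-a1} forces the base size to be exactly $K=6=\tfrac1\alpha+1$ and the $6$-base $P=\{p_1,\dots,p_6\}$ to be a regular $5$-simplex with $\sum_i p_i=0$. Hence only $(6,3)$-pillars occur, and there are at most $\tfrac12\binom63=10$ of them. Put $\Gamma=\operatorname{span}(P)\cong\mathbb R^5$, $\Lambda=\Gamma^\perp\cong\mathbb R^{r-5}$, and write $x=h_x+c_x$ with $h_x\in\Gamma$, $c_x\in\Lambda$ for $x\in X\setminus P$. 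A direct computation as in Section~\ref{sec:schur} gives $h_x=\tfrac13\sum_{i\in S_x}p_i$ (with $S_x$ the positive pattern), so $\|h_x\|^2=\tfrac15$ and $\|c_x\|^2=\tfrac45$ for every pillar vector. Consequently, for $x,y$ in the same pillar, $\langle\hat c_x,\hat c_y\rangle=\tfrac54\bigl(\langle x,y\rangle-\tfrac15\bigr)\in\{0,-\tfrac12\}$; and for $x,u$ in pillars whose patterns share $k\in\{0,1,2\}$ positive coordinates, $\langle h_x,h_u\rangle=\tfrac{2k-3}{15}$ and $\langle\hat c_x,\hat c_u\rangle$ lies in an explicit two-element set, namely $\{\tfrac16,-\tfrac13\}$ for $k=2$, $\{\tfrac13,-\tfrac16\}$ for $k=1$, and $\{\tfrac12,0\}$ for $k=0$.

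Next I would exploit the intra-pillar relation: the Gram matrix of the normalized $c$-vectors of one pillar equals $I-\tfrac12A(H)$, where $H$ joins two vectors exactly when their inner product is $-\tfrac12$, and positive semidefiniteness is equivalent to $\lambda_{\max}(H)\le 2$. By the classification of such graphs, $H$ is a disjoint union of ordinary Dynkin diagrams $A_\ell,D_\ell,E_{6,7,8}$ (spectral radius $<2$) and affine ones $\widetilde A_\ell,\widetilde D_\ell,\widetilde E_{6,7,8}$ (spectral radius $2$); see Figure~\ref{fig:graphtypes}. Each connected affine component contributes $1$ to the nullity of $I-\tfrac12A(H)$, so a pillar with $v$ vectors and $b$ affine components spans a $(v-b)$-dimensional subspace of $\Lambda$, and the minimal dependence on such a component has coefficients given by the Perron eigenvector of the affine diagram (all-ones for $\widetilde A_\ell$, the marks for $\widetilde D,\widetilde E$). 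These explicit null relations, combined with the inter-pillar inner products above, are the levers for controlling how several pillars can coexist inside the same $(r-5)$-dimensional space.

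The main estimate then comes from a case analysis on whether affine components occur. If every pillar graph has spectral radius $<2$, each pillar's $c$-vectors are linearly independent; feeding the $\le 10$ sign patterns and the inter-pillar Gram constraints into a Schur-complement / linear-programming argument (in the style of Theorem~\ref{thm:2vec}) bounds the pillar sizes, and one checks that the total never exceeds the claimed value. If some pillar graph contains an affine component, its null relation, pushed through the inter-pillar inner products, forces rigid structure on the remaining pillars; iterating shows the whole $c$-system is essentially determined, so the configuration is either (a variant of) the one realizing the unique regular two-graph on $276$ vertices — giving $|X|\le 276$, attained for $23\le r\le 185$ by embedding that configuration into $\mathbb R^r$ — or is the near-pencil type assembled from $\lfloor\tfrac{r-5}{2}\rfloor$ affine triangles together with the $6$-simplex, which gives $|X|=\lfloor\tfrac12(r-5)\rfloor+r+1$ and is optimal once $r\ge 185$ (the two formulas agreeing there). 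The uniqueness of the $276$-vertex regular two-graph is exactly what rules out a larger ``almost-affine'' configuration at the boundary.

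The hard part is the inter-pillar analysis. A single pillar can by itself contain on the order of $\tfrac32(r-5)$ vectors (triangles being the densest affine pieces), so the theorem can only hold because at most one pillar is allowed to be large and because affine structure in one pillar severely restricts all the others through the prescribed cross inner products. Making this precise — determining exactly which combinations of pillar graphs are jointly realizable in $\mathbb R^{r-5}$ with inner products drawn from $\{-\tfrac12,-\tfrac13,-\tfrac16,0,\tfrac16,\tfrac13,\tfrac12\}$ — is the technical core, where the linear-programming bookkeeping over the (at most ten) pillar types, the affine-diagram null relations, and the uniqueness of the $276$-vertex two-graph must all be brought to bear.
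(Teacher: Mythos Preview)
This theorem is quoted from Lemmens--Seidel (1973) and is \emph{not} proved in the present paper; the paper only extracts from that proof the trichotomy recorded in Lemma~\ref{lem:LS-5.7} (bounds $276/222/258$ according to how many pillars contain an adjacent pair) and then uses it in the $K=5$ argument. So there is no ``paper's own proof'' here to compare against beyond that summary.

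Your setup is correct and matches the Lemmens--Seidel framework: the $6$-simplex base with $\sum p_i=0$, only $(6,3)$-pillars (at most ten), the intra-pillar Gram $I-\tfrac12 A(H)$, and the affine Dynkin classification (their Theorem~5.1). Your inner-product computations are right, with the caveat that $k=0$ does not occur between \emph{distinct} pillars: complementary $3$-sets define the same pillar after a sign flip.

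Where your plan diverges is the case split and the external input. Lemmens--Seidel do \emph{not} split on ``some pillar contains an affine component'' versus ``none does''; as Lemma~\ref{lem:LS-5.7} shows, their trichotomy is based on whether pillars contain an \emph{edge} (a single adjacent pair), which is a much weaker structural hypothesis than an affine component, and their bounds $276/222/258$ follow from that finer analysis. Moreover, the Goethals--Seidel uniqueness of the $276$-vertex two-graph (1975) postdates the theorem, so it cannot be the mechanism of the original proof; in this paper uniqueness is invoked only for the $K=5$ case, not for $K=6$.

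Finally, your proposal is by your own admission a plan rather than a proof: the ``hard part'' --- the inter-pillar analysis bounding the total when no (or one) pillar carries an affine component --- is left undone. The assertion that a Schur/LP argument ``bounds the pillar sizes'' in the no-affine case is not substantiated; each of ten pillars can individually be as large as $r-5$, so the whole content of the theorem lies precisely in the cross-pillar constraints you have not yet worked out.
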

This theorem corresponds to the case where the common angle $\alpha = \frac{1}{5}$ and base size $K = 6$. 
Lemmens and Seidel concluded Section~5 of \cite{lemmens1973} with the following remark, which we quote here:

\smallskip

\begin{quote} {\sl
It would be interesting to know whether Theorem~5.7 holds true without the requirement of the existence of $6$ unit vectors with inner product $-\frac{1}{5}$. \dots\ The authors have obtained only partial results in this direction.
In fact, the cases where [the base size $K$] $=2, 3, 5$ have been proved, but the case [$K=4$] remains unsettled.
Yet, there is enough evidence to support the following conjecture. \dots}
\end{quote}

\smallskip

So they raised their conjecture (Conjecture~\ref{conj:LS}), but the proofs, even for the cases $K = 3, 5$, have been elusive.
Sections~3 and 4 of \cite{king2016} provided some upper bounds for $\alpha = \frac{1}{5}$.
It is well known that $|X| \leq r$ if $X \subset \mathbb R^r$ and $K = 2$~(cf.~\cite{king2016}, Corollary~2).
In this section we are going to sharpen their results and prove the conjecture when $K = 3, 5$.

\subsection{\texorpdfstring{$K = 3$}{K=3}}
\label{sec:a5-k3}

Let $X \subset \mathbb R^r$ be an equiangular set with angle $\frac{1}{5}$ in $\mathbb R^r$, with the base size $K = K(X) = 3$.
Let $P = \{ p_1, p_2, p_3 \}$ be a $3$-base in $X$, and the rest of the vectors in $X \setminus P$ are partitioned into three $(3,1)$ pillars.
By symmetry, for a unit vector $x \in X \setminus P$ that satisfies
\begin{equation*}
( \langle x, p_1 \rangle, \langle x, p_2 \rangle, \langle x, p_3 \rangle ) = \frac{1}{5} (1, -1, -1),
\end{equation*}
we can decompose $x$ into $x = h + c$, where $h \in \Gamma$ and $c \in \Gamma^\perp$.
A little computation shows that
\begin{equation*}
h = \frac{1}{9} (p_1 - 2p_2 - 2p_3).
\end{equation*}
So $\| h \|^2 = \frac{1}{9}$ and $\| c \|^2 = \frac{8}{9}$.
If $x_1 = h + c_1$ and $x_2 = h + c_2$ come from the same $(3,1)$ pillar, then $\langle x_1, x_2 \rangle = \frac{1}{5}$ by Proposition~\ref{prop:K1-indep}, henceforth $\langle \hat{c}_1, \hat{c}_2 \rangle = \frac{1}{10}$.
If $x = h_1 + c_1$ and $y = h_2 + c_2$ come from different $(3,1)$ pillars, then (by symmetry again)
\begin{equation*}
  \langle h_1, h_2 \rangle = \langle \frac{1}{9} (p_1 - 2p_2 - 2p_3), \frac{1}{9} (-2p_1 + p_2 - 2p_3) \rangle = -\frac{1}{45}.
\end{equation*}
Since
\begin{equation*}
\pm \frac{1}{5} = \langle x, y \rangle = \langle h_1, h_2 \rangle + \langle c_1, c_2 \rangle,
\end{equation*}
hence $\langle \hat{c}_1, \hat{c}_2 \rangle \in \{ \frac{1}{4}, -\frac{1}{5} \}$.

\begin{lem}
\label{lem:two-X31}
Suppose that there are two nonempty $(3,1)$ pillars.
If one of them has $4$ vectors, then the other has at most $54$ vectors.
\end{lem}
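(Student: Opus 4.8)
The plan is to run the Gram-matrix/Schur-complement machinery of Theorem~\ref{thm:2vec}, but anchored at a $(3,1)$ pillar with four vectors rather than two. Let $\bar u = \{u_1,u_2,u_3,u_4\}$ be four vectors drawn from the pillar that has four or more vectors, and let $\bar x = \{x_1,\dots,x_N\}$ be the other nonempty $(3,1)$ pillar, with pillar decompositions $u_j = h_2 + d_j$ and $x_i = h_1 + c_i$. By the computations just before the lemma, the normalized components satisfy $\langle\hat c_i,\hat c_{i'}\rangle = \langle\hat d_j,\hat d_{j'}\rangle = \frac{1}{10}$ (within each pillar, by Proposition~\ref{prop:K1-indep}) and $\langle\hat c_i,\hat d_j\rangle\in\{\frac14,-\frac15\}$ (across pillars). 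Hence
\[
G(\hat c_1,\dots,\hat c_N,\hat d_1,\dots,\hat d_4) = \begin{bmatrix} A & B \\ B^{\ol{T}} & D\end{bmatrix},
\qquad A = \frac{9}{10}I_N + \frac{1}{10}J_N,\quad D = \frac{9}{10}I_4 + \frac{1}{10}J_4,
\]
is positive semidefinite, where $B\in\{\frac14,-\frac15\}^{N\times 4}$ collects the cross inner products. Since $A$ is positive definite with $A^{-1} = \frac{10}{9}I_N - \frac{10}{9(N+9)}J_N$, Theorem~\ref{thm:schur-decomp} tells us that the $4\times 4$ matrix $M := D - B^{\ol{T}}A^{-1}B$ is positive semidefinite; this single constraint is what I will exploit.

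The next step is to convert ``$M\succcurlyeq 0$'' into a finite optimization. Partition $\bar x$ by the sign pattern $\varepsilon\in\{+,-\}^4$ that records, for $x\in\bar x$, whether $\langle x,u_j\rangle$ equals $\frac15$ or $-\frac15$, and let $\ell_\varepsilon$ be the number of vectors of type $\varepsilon$, so $N = \sum_\varepsilon\ell_\varepsilon$. Every entry of $B^{\ol{T}}B$ and every coordinate of $B^{\ol{T}}\mathbf{1}$ is a linear function of the $\ell_\varepsilon$, so the entries of $(N+9)M$ are polynomials of degree at most two in the $\ell_\varepsilon$, and ``$M\succcurlyeq 0$'' is equivalent to a finite system of polynomial inequalities in them (for instance, nonnegativity of all principal minors of $(N+9)M$). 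The problem is then to maximize $N = \sum_\varepsilon\ell_\varepsilon$ over nonnegative integers $\ell_\varepsilon$ subject to this system. I would cut down the sixteen variables using the $S_4$-action permuting $u_1,\dots,u_4$ and, in the manner of the proof of Theorem~\ref{thm:2vec}, show that certain ``dominated'' types are empty at an optimum; in the surviving configurations $M$ degenerates into something of the form $pI_4 + qJ_4$ (up to a low-rank correction), with explicit eigenvalues $p$ (multiplicity~$3$) and $p+4q$, so positive semidefiniteness reduces to two scalar inequalities in a handful of integers, and the resulting small program has optimal value $54$.

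It is also worth noting why the hypothesis asks for \emph{four} vectors. If one anchors instead at a sub-pillar of size $k\le 3$ and feeds in all $x\in\bar x$ with $\langle x,u_j\rangle = -\frac15$ for every $j$, then $B = -\frac15\mathbf{1}_N\mathbf{1}_k^{\ol{T}}$ and the Schur complement $M = \frac{9}{10}I_k + \bigl(\frac{1}{10} - \frac{2N}{5(N+9)}\bigr)J_k$ is positive semidefinite for every $N$; so $k\le 3$ anchors give no bound at all, while $k=4$ is exactly the threshold at which this extremal configuration first becomes constraining (there it forces $N\le 39$).

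The main obstacle will be the optimization step. In contrast with Theorem~\ref{thm:2vec}, where the $c$-vectors inside a pillar are orthogonal and $A = I_N$, here they meet at the angle $\arccos\frac{1}{10}$, so $A^{-1}$ carries a genuine rank-one correction and the entries of $M$ are no longer bilinear in the type-counts; in particular the convenient factorization of $\det M$ into a product of linear forms that drove Theorem~\ref{thm:2vec} is not available unchanged. Pinning down which of the sixteen $\ell_\varepsilon$ vanish at the optimum, and bounding the remaining ones, is where the actual work sits, and it may be cleanest to phrase it as an explicit small linear program or to check the finitely many extreme configurations by direct computation.
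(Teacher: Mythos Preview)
Your setup is exactly the paper's: the same block Gram matrix, the same Schur complement $M = D - B^{\ol T} A^{-1} B$ with $A^{-1} = \frac{10}{9}I_N - \frac{10}{9(N+9)}J_N$, the same partition of $\bar x$ into sixteen types indexed by sign patterns against $u_1,\dots,u_4$, and even the same remark about why four anchors are the threshold (the all-negative configuration is unconstrained for $k\le 3$). So the framework is not in question.

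Where your plan and the paper part ways is the optimization. You hope to imitate Theorem~\ref{thm:2vec}: use the $S_4$-action, show dominated types vanish at the optimum, and collapse $M$ to something like $pI_4+qJ_4$ whose positivity is two scalar inequalities. The paper does not attempt this, and your own caveat explains why: the rank-one correction in $A^{-1}$ kills the bilinearity and with it the clean factorization of $\det M$ that made Theorem~\ref{thm:2vec} tractable. Instead the paper takes the computational fallback you mention at the end. It first bounds each individual $t_B$ by zeroing out all the others (getting $t_{0000}\le 9$, $t_B\le 7$ for $|B|\in\{1,2\}$, $t_B\le 9$ for $|B|=3$, $t_{1111}\le 39$), then bounds each degree-class sum $\sum_{|B|=i}t_B$ (obtaining $16,13,16$ for $i=1,2,3$), and finally tabulates, for each fixed $t_{1111}\in\{0,\dots,39\}$, the resulting upper bound on $|\bar x|$. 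The maximum over the table is $54$, realized at $t_{1111}=0$ as $9+16+13+16+0$; a \texttt{SAGE} script is cited for the checks. The bound $54$ is not claimed to be sharp---it is merely small enough that Theorem~\ref{thm:a5-k3} beats the Lemmens--Seidel target. So your structural reduction, if it can be made to work, would be a genuine improvement over the paper's argument; but the paper itself does not pursue it and simply computes.
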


\begin{proof}
Let 
\begin{align*}
\bar{x} &= \{ h_1 + c_i \colon h_1 \in \Gamma, c_i \in \Gamma^\perp, i = 1, \dots, n \}, \\
\bar{u} &= \{ h_2 + d_i \colon h_2 \in \Gamma, d_i \in \Gamma^\perp, i = 1, 2, 3, 4 \},
\end{align*}
be two nonempty $(3,1)$ pillars.
Then the Gram matrix of $\hat{c}_i$ and $\hat{d}_i$ has the following form:
\begin{equation}
\label{eq:G-cd}
G = G( \hat{c}_1, \dots, \hat{c}_n, \hat{d}_1, \dots, \hat{d}_4 ) = 
\begin{bmatrix}
 & & & & & & & \\
 & & & & & & & \\
 & & \frac{9}{10} I_n + \frac{1}{10} J_n & v_1 & v_2 & v_3 & v_4 \\
 & & & & & & & \\
 & & v_1^{\ol{T}} & & & & & \\
 & & v_2^{\ol{T}} & \multicolumn{4}{c}{\frac{9}{10} I_4 + \frac{1}{10} J_4} \\
 & & v_3^{\ol{T}} & & & & & \\
 & & v_4^{\ol{T}} & & & & &
\end{bmatrix},
\end{equation}
where $v_1, \dots, v_4$ are column vectors whose entries are $\frac{1}{4}$ or $-\frac{1}{5}$.
Since $G$ needs to be positive semidefinite, by Theorem~\ref{thm:schur-decomp} we see that
\begin{equation}
\label{eq:a5-31-4}
M := \Bigl( \frac{9}{10} I_4 + \frac{1}{10} J_4 \Bigr) 
- V^{\ol{T}}
\Bigl( \frac{9}{10} I_n + \frac{1}{10} J_n \Bigr)^{-1} V
\succcurlyeq 0, \qquad \text{where } V := \begin{bmatrix}
v_1 & v_2 & v_3 & v_4
\end{bmatrix}.
\end{equation}
The following setup is used to facilitate the computation.
Consider the Seidel graph $S'$ generated by the vectors in $\bar{x} \cup \bar{u}$.
By Proposition~\ref{prop:K1-indep}, $S'$ is a bipartite graph because every edge must connect a vertex in $\bar{x}$ to a vertex in $\bar{u}$.
Let us classify the vectors in $\bar{x}$ by how they are connected to the vectors $u_1, \dots, u_4$ in $\bar{u}$.
Let $B_4$ be the set of binary strings of length $4$, and let $B_{4,i}$ denote the subset of $B_4$ consisting of those binary strings $b_1 b_2 b_3 b_4$ such that $\sum_j b_j = i$ for $i = 0, 1, 2, 3, 4$.
For $B = b_1 b_2 b_3 b_4 \in B_4$, let $t_B$ denote the number of vectors $h_1 + c$ in the pillar $\bar{x}$ such that
\begin{equation*}
\langle \hat{c}, \hat{d}_i \rangle = \left\{
\begin{array}{ll}
\frac{1}{4},  & \text{if $b_i = 0$}, \\
-\frac{1}{5}, & \text{if $b_i = 1$},
\end{array}
\right. \quad i = 1, 2, 3, 4.
\end{equation*}
In total there are $2^4 = 16$ variables $t_B$, $B \in B_4$, of non-negative integral values.
Obviously $n = \sum_{B \in B_4} t_B$, which is the total number of vectors in $\bar{x}$,
and $\sum_{B \in B_{4,i}} t_B$ is the number of vertices of degree $i$ in $\bar{x}$, for $i = 0, 1, 2, 3, 4$.

The vectors $v_1, v_2, v_3, v_4$ in the Gram matrix $G$ in (\ref{eq:G-cd}) has the following mutual inner products:
\begin{equation*}
\langle v_i, v_j \rangle = \frac{1}{16} \sum_{B \in B^{0,0}_{i,j}} t_B - \frac{1}{20} \sum_{B \in B^{0,1}_{i,j}} t_B + \frac{1}{25} \sum_{B \in B^{1,1}_{i,j}} t_B, \qquad i, j \in \{ 1, 2, 3, 4 \},
\end{equation*}
where $B^{k,\ell}_{i,j}$ is the subset of $B_4$ consisting of $B = b_1 b_2 b_3 b_4$ such that $\{b_i, b_j\} = \{k, \ell\}$, for $k, \ell \in \{ 0, 1 \}$.  For instance,
\begin{align*}
\inn{v}{1}{2} &= \frac{1}{16} ( t_{0000} + t_{0001} + t_{0010} + t_{0011} ) \\
&\phantom{= } - \frac{1}{20} ( t_{0100} + t_{0101} + t_{0110} + t_{0111} + t_{1000} + t_{1001} + t_{1010} + t_{1011} ) \\
&\phantom{= } + \frac{1}{25} ( t_{1100} + t_{1101} + t_{1110} + t_{1111} ).
\end{align*}
We also need
\begin{equation*}
w_i := \frac{1}{4} \sum_{\substack{ B = b_1 b_2 b_3 b_4 \in B_4 \\ b_i = 0 }} t_B - \frac{1}{5} \sum_{\substack{ B = b_1 b_2 b_3 b_4 \in B_4 \\ b_i = 1 }} t_B, \quad i \in \{ 1, 2, 3, 4 \}.  
\end{equation*}
For example,
\begin{align*}
w_1 &= \frac{1}{4} ( t_{0000} + t_{0001} + t_{0010} + t_{0011} + t_{0100} + t_{0101} + t_{0110} + t_{0111}) \\
&\phantom{= } - \frac{1}{5} ( t_{1000} + t_{1001} + t_{1010} + t_{1011} + t_{1100} + t_{1101} + t_{1110} + t_{1111}).
\end{align*}
Since
\begin{equation*}
\Bigl( \frac{9}{10} I_n + \frac{1}{10} J_n \Bigr)^{-1} = \frac{10}{9} \Bigl( I_n - \frac{1}{9+n} J_n \Bigr),
\end{equation*}
\begin{equation*}
V^{\ol{T}} I_n V
= \begin{bmatrix}
\inn{v}{i}{j}
\end{bmatrix}_{i,j=1}^4, \qquad
V^{\ol{T}} J_n V
= \begin{bmatrix}
w_i w_j
\end{bmatrix}_{i,j=1}^4.
\end{equation*}
we use these informations to expand the left-hand side of (\ref{eq:a5-31-4}) as
\begin{equation}
\label{eq:a5-31-4-a}
M = \frac{9}{10} I_4 + \frac{1}{10} J_4 - \frac{10}{9} V^{\ol{T}} I_n V + \frac{10}{9(9+n)} V^{\ol{T}} J_n V 
= \begin{bmatrix}
m_{ij}
\end{bmatrix}_{i,j=1}^4,
\end{equation}
where the entries $m_{ij}$ are
\begin{equation*}
m_{ij} = \left\{
\begin{array}{ll}
1 - \frac{10}{9} \inn{v}{i}{i} + \frac{10}{9(9+n)} w_i^2, & \text{if } i = j, \\
\frac{1}{10} - \frac{10}{9} \inn{v}{i}{j} + \frac{10}{9(9+n)} w_i w_j, & \text{if } i \neq j,
\end{array}
\qquad i, j \in \{1, 2, 3, 4 \}.
\right.
\end{equation*}
Remind that we want to maximize the sum $n = \sum_{B \in B_4} t_B$ subject to the conditions $t_B \in \mathbb Z$, $t_B \geq 0$ for all $B \in B_4$, and $M \succcurlyeq 0$.
Notice that when we set some of the variables $t_B$ to be zero, we are focusing on a particular subset of vectors in the pillar $\bar{x}$.
We argue that each of the variables $t_B$ has an upper bound as follows:
\begin{itemize}
\item Set $t_{0000} = n$ and $t_B = 0$ for all $B \neq 0000$.  Then
\begin{equation*}
M = \frac{9}{10} I_4 + \Bigl( \frac{1}{10} - \frac{5n}{8(9+n)} \Bigr) J_4.
\end{equation*}
By considering its eigenvalues, we see that $M$ is positive semidefinite if and only if 
\begin{equation*}
\frac{9}{10} + 4 \cdot \Bigl( \frac{1}{10} - \frac{5n}{8(9+n)} \Bigr) \geq 0.
\end{equation*}
Solving this inequality for $n$, we get $-9 \leq n \leq \frac{39}{4}$.
Since $n$ only assumes a non-negative integral values, we see that $0 \leq n \leq 9$;
this is the range for $t_{0000}$.

\item Set $t_{1000} = n$ and $t_B = 0$ for all $B \neq 1000$.  Then
\begin{equation*}
M = \begin{bmatrix}
1 - \frac{5n}{8(9+n)} & \frac{1}{10} - \frac{5n}{8(9+n)} & \frac{1}{10} - \frac{5n}{8(9+n)} & \frac{1}{10} + \frac{n}{2(9+n)} \\
\frac{1}{10} - \frac{5n}{8(9+n)} & 1 - \frac{5n}{8(9+n)} & \frac{1}{10} - \frac{5n}{8(9+n)} & \frac{1}{10} + \frac{n}{2(9+n)} \\
\frac{1}{10} - \frac{5n}{8(9+n)} & \frac{1}{10} - \frac{5n}{8(9+n)} & 1 - \frac{5n}{8(9+n)} & \frac{1}{10} + \frac{n}{2(9+n)} \\
\frac{1}{10} + \frac{n}{2(9+n)} & \frac{1}{10} + \frac{n}{2(9+n)} & \frac{1}{10} + \frac{n}{2(9+n)} & 1 - \frac{2n}{5(9+n)}
\end{bmatrix}
\end{equation*}
By considering non-negative values for $n$ only, our computation shows that $M$ is positive semidefinite if and only if $0 \leq n \leq 7$.  By symmetry, we conclude that $0 \leq t_B \leq 7$ for each $B \in B_{4,1}$.

\item Set $t_{1100} = n$ and $t_B = 0$ for all $B \neq 1100$.  Then
\begin{equation*}
M = \begin{bmatrix}
1 - \frac{5n}{8(9+n)} & \frac{1}{10} - \frac{5n}{8(9+n)} & \frac{1}{10} + \frac{n}{2(9+n)} & \frac{1}{10} + \frac{n}{2(9+n)} \\
\frac{1}{10} - \frac{5n}{8(9+n)} & 1 - \frac{5n}{8(9+n)} & \frac{1}{10} + \frac{n}{2(9+n)} & \frac{1}{10} + \frac{n}{2(9+n)} \\
\frac{1}{10} + \frac{n}{2(9+n)} & \frac{1}{10} + \frac{n}{2(9+n)} & 1 - \frac{2n}{5(9+n)} & \frac{1}{10} - \frac{2n}{5(9+n)} \\
\frac{1}{10} + \frac{n}{2(9+n)} & \frac{1}{10} + \frac{n}{2(9+n)} & \frac{1}{10} - \frac{2n}{5(9+n)} & 1 - \frac{2n}{5(9+n)}
\end{bmatrix}
\end{equation*}
By considering non-negative values for $n$ only, our computation shows that $M$ is positive semidefinite if and only if $0 \leq n \leq 7$.  By symmetry, we conclude that $0 \leq t_B \leq 7$ for each $B \in B_{4,2}$.

\item Set $t_{1110} = n$ and $t_B = 0$ for all $B \neq 1110$.  Then
\begin{equation*}
M = \begin{bmatrix}
1 - \frac{5n}{8(9+n)} & \frac{1}{10} + \frac{n}{2(9+n)} & \frac{1}{10} + \frac{n}{2(9+n)} & \frac{1}{10} + \frac{n}{2(9+n)} \\
\frac{1}{10} + \frac{n}{2(9+n)} & 1 - \frac{2n}{5(9+n)} & \frac{1}{10} - \frac{2n}{5(9+n)} & \frac{1}{10} - \frac{2n}{5(9+n)} \\
\frac{1}{10} + \frac{n}{2(9+n)} & \frac{1}{10} - \frac{2n}{5(9+n)} & 1 - \frac{2n}{5(9+n)} & \frac{1}{10} -- \frac{2n}{5(9+n)} \\
\frac{1}{10} + \frac{n}{2(9+n)} & \frac{1}{10} - \frac{2n}{5(9+n)}& \frac{1}{10} - \frac{2n}{5(9+n)} & 1 - \frac{2n}{5(9+n)}
\end{bmatrix}
\end{equation*}
By considering non-negative values for $n$ only, our computation shows that $M$ is positive semidefinite if and only if $0 \leq n \leq 9$.  By symmetry, we conclude that $0 \leq t_B \leq 9$ for each $B \in B_{4,3}$.

\item Set $t_{1111} = n$ and $t_B = 0$ for all $B \neq 1111$.  Then
\begin{equation*}
M = \frac{9}{10} I_4 + \Bigl( \frac{1}{10} - \frac{2n}{5(9+n)} \Bigr) J_4.
\end{equation*}
Hence $M$ is positive semidefinite if and only if $0 \leq n \leq 39$; this is the range for $t_{1111}$.
\end{itemize}

Up to this point, we find that there are only a finite number of combinations of $16$-tuples $(t_B : B \in B_4)$ that will make the matrix $M$ positive semidefinite; so far there are $10 \cdot 8^4 \cdot 8^6 \cdot 10^4 \cdot 40 \approx 2.8 \times 10^{15}$ cases to check.
To further reduce the computations, we have observed the following\footnote{The {\tt SAGE} script for this part of computations can be downloaded at
\url{http://math.ntnu.edu.tw/~yclin/two-31-pillars.sage}.}:
\begin{enumerate}[(i)]
\item Let us consider the upper bounds on the number of vertices in $\bar{x}$ of each of the degrees in the Seidel graph $S'$ (generated by $\bar{x} \cup \bar{u}$), that is, upper bounds for $\sum_{B \in B_{4,i}} t_B$, $i = 0, 1, 2, 3, 4$.
For example, when we only look for vertices of degree $1$, we set $t_B = 0$ whenever $B \in B \setminus B_{4,1}$.
Since $0 \leq t_B \leq 7$ for $B \in B_{4,1}$, we only need to pick out those quadruples $(t_{0001}, t_{0010}, t_{0100}, t_{1000}) \in \{ 0, 1, \dots, 7 \}^4$ such that the resulting matrix $M$ in (\ref{eq:a5-31-4-a}) is positive semidefinite (there are only $(7+1)^4 = 4096$ cases to check).
Among those quadruples which survive the test, the maximum for the sum $\sum_{B \in B_{4,1}} t_B$ is $16$, which occurs at $t_B = 4$ for each $B \in B_{4,1}$.

The computations for other degrees are similar and we find that
\begin{equation*}
\sum_{B \in B_{4,1}} t_B \leq 16, \qquad
\sum_{B \in B_{4,2}} t_B \leq 13, \qquad \text{and} \quad
\sum_{B \in B_{4,3}} t_B \leq 16.
\end{equation*}
This is not good enough to beat the Lemmens-Seidel 
bound\footnote{When there are two $(3,1)$ pillars with $4$ or more vectors, our computations shows that the size of whole equiangular set is bounded by $3 + 93 \cdot 3 = 282$ (see also the comparison done in Theorem~\ref{thm:a5-k3}).
But this is not enough to beat the Lemmens-Seidel's bound of $276$.}, so we proceed further.

\item We fix the value of the variable $t_{1111}$ in the range $0 \leq t_{1111} \leq 39$, and consider the maximum possible value for another variable $t_B$ for $B \in B \setminus B_{4,4}$ subject to that the matrix $M$ in (\ref{eq:a5-31-4-a}) is positive semidefinite.
To do this, we set $t_{B'} = 0$ whenever $B' \neq 1111, B' \neq B$.
Table~\ref{tbl:tb-upper-bound} lists the upper bounds for $t_B$, $B \in B_{4,i}$, $i = 0, 1, 2, 3$, when the value of $t_{1111}$ is specified.

\begin{table}
\centering
\caption{Upper bounds for $t_B$ for specified values of $t_{1111}$}
\label{tbl:tb-upper-bound}
\begin{tabular}{c|cccc|c}
& \multicolumn{4}{c}{Upper bounds for $t_B$} & \\
$t_{1111}$ & $B \in B_{4,0}$ & $B \in B_{4,1}$ & $B \in B_{4,2}$ & $B \in B_{4,3}$ & $M_{\bar{x}}$ \\ \hline
0 & 9 & 7 & 7 & 9 & 54 \\
1 & 5 & 5 & 6 & 9 & 51 \\
2 & 3 & 4 & 5 & 9 & 50 \\
3 & 2 & 3 & 5 & 9 & 46 \\
4 & 2 & 3 & 4 & 8 & 47 \\
5 & 1 & 2 & 4 & 8 & 43 \\
6 & 1 & 2 & 3 & 8 & 44 \\
7 & 1 & 2 & 3 & 8 & 45 \\
8 & 1 & 1 & 3 & 7 & 42 \\
9 & 0 & 1 & 3 & 7 & 42 \\
10, 11 & 0 & 1 & 2 & 7 & 42, 43 \\
12, 13 & 0 & 1 & 2 & 6 & 44, 45 \\
14 & 0 & 0 & 2 & 6 & 42 \\
15 & 0 & 0 & 1 & 6 & 37 \\
16--19 & 0 & 0 & 1 & 5 & 38--41 \\
20--22 & 0 & 0 & 1 & 4 & 42--44 \\
23 & 0 & 0 & 0 & 4 & 39 \\
24--27 & 0 & 0 & 0 & 3 & 36--39 \\
28--31 & 0 & 0 & 0 & 2 & 36--39 \\
32--35 & 0 & 0 & 0 & 1 & 36--39 \\
36--39 & 0 & 0 & 0 & 0 & 36--39
\end{tabular}
\end{table}
\end{enumerate}

Denote the upper bound for $t_B$ for $B \in B_{4,i}$ found in Table~\ref{tbl:tb-upper-bound} by $m_i$, $i = 0, 1, 2, 3$.
Since $|B_{4,0}| = 1$, $|B_{4,1}| = 4$, $|B_{4,2}| = 6$, and $|B_{4,3}| = 4$, an upper bound for the size of the pillar $\bar{x}$ is given by
\begin{equation*}
M_{\bar{x}} = m_0 + \min \{ 4m_1, 16 \} + \min \{ 6m_2, 13 \} + \min \{ 4m_3, 16 \} + t_{1111}.
\end{equation*}
The values for $M_{\bar{x}}$ are also listed in Table~\ref{tbl:tb-upper-bound}.
From here we conclude that the size of a $(3,1)$ pillar cannot exceed 54 when another $(3,1)$ pillar with $4$ or more vectors is present.
\end{proof}

\noindent {\bf Remark.} We note here that when a $(3,1)$ pillar $\bar{u}$ has $3$ vectors only, it is possible to have another $(3,1)$ pillar $\bar{x}$ with as many vectors as possible.  This occurs when the inner product between any one vector in $\bar{x}$ and any one vector in $\bar{u}$ is $-\frac{1}{5}$.  
Assume that $|\bar{x}| = n$.
Then the Gram matrix $G = G(\hat{c}_1, \dots, \hat{c}_n, \hat{d}_1, \hat{d}_2, \hat{d}_3)$ is
\begin{equation*}
G = \begin{bmatrix}
& & & & & \\
& \frac{9}{10} I_n + \frac{1}{10} J_n & & v & v & v \\
& & & & & \\
& v^{\ol{T}} & & & & \\
& v^{\ol{T}} & & \multicolumn{3}{c}{\frac{9}{10} I_3 + \frac{1}{10} J_3} \\
& v^{\ol{T}} & & & & 
\end{bmatrix},
\end{equation*}
where $v$ is the vector $(-\frac{1}{5}, -\frac{1}{5}, \dots, -\frac{1}{5})$ in $\mathbb R^n$, 
and $G$ has the Schur decomposition:
\begin{equation*}
\frac{9}{10} I_3 + \frac{1}{10} J_3 - \begin{bmatrix}
v^{\ol{T}} \\ v^{\ol{T}} \\ v^{\ol{T}}
\end{bmatrix} \bigl( \frac{9}{10} I_n + \frac{1}{10} J_n \bigr)^{-1} \begin{bmatrix}
v & v & v 
\end{bmatrix} = 
\frac{9}{10} I_3 + \Bigl( \frac{1}{10} - \frac{2n}{5(9+n)} \Bigr) J_3,
 \end{equation*}
which is always positive definite for any $n \in \mathbb N$.

\begin{thm}
\label{thm:a5-k3}
Let $X$ be an equiangular set with angle $\frac{1}{5}$ and base size $K(X) = 3$ in $\mathbb R^r$.
Then
\begin{equation*}
|X| \leq \max \{ 165, r+6 \}.
\end{equation*}
\end{thm}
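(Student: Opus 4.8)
The plan is to bound $|X|$ by analyzing the three $(3,1)$ pillars with respect to a fixed $3$-base $P = \{p_1, p_2, p_3\}$. Since $\alpha = \frac{1}{5}$ and $K(X) = 3 < \frac{1}{\alpha} + 1 = 6$, Proposition~\ref{prop:k-a1} gives that $G(P) = (1+\alpha)I - \alpha J$ is positive definite, so $\Gamma = \op{span} P$ is $3$-dimensional and $\Gamma^\perp$ has dimension $r - 3$. Moreover, a vector $x \in X \setminus P$ with $\varepsilon(x) = (-,-,-)$ would form a $4$-clique with $p_1, p_2, p_3$ in the Seidel graph, contradicting $K(X) = 3$; hence every vector of $X \setminus P$ belongs to one of the three $(3,1)$ pillars $\bar{x}^{(1)}, \bar{x}^{(2)}, \bar{x}^{(3)}$ (some possibly empty), and $|X| = 3 + |\bar{x}^{(1)}| + |\bar{x}^{(2)}| + |\bar{x}^{(3)}|$. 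I would then split into two cases according to how many of these pillars are ``large''.

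\textbf{Case 1: at least two pillars have $\ge 4$ vectors}, say $|\bar{x}^{(1)}|, |\bar{x}^{(2)}| \ge 4$. Applying Lemma~\ref{lem:two-X31} to the pair $(\bar{x}^{(1)}, \bar{x}^{(2)})$ (restricting to any four vectors of one of them, which only shrinks the relevant principal submatrix of the Gram matrix) yields $|\bar{x}^{(1)}| \le 54$ and $|\bar{x}^{(2)}| \le 54$; applying it to the pair $(\bar{x}^{(1)}, \bar{x}^{(3)})$, with $\bar{x}^{(1)}$ playing the role of the pillar with at least four vectors, yields $|\bar{x}^{(3)}| \le 54$. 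Hence $|X| \le 3 + 3 \cdot 54 = 165$.

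\textbf{Case 2: at most one pillar has $\ge 4$ vectors.} Then at least two pillars, say $\bar{x}^{(2)}$ and $\bar{x}^{(3)}$, have at most $3$ vectors (an empty pillar counting as having $0$). Write the remaining pillar as $\bar{x}^{(1)} = \{\, h_1 + c_i \colon i = 1, \dots, n \,\}$ with $h_1 \in \Gamma$ and $c_i \in \Gamma^\perp$; by the computation preceding Lemma~\ref{lem:two-X31}, the unit vectors $\hat{c}_1, \dots, \hat{c}_n$ have Gram matrix $\frac{9}{10} I_n + \frac{1}{10} J_n$, which is positive definite, so $\hat{c}_1, \dots, \hat{c}_n$ are linearly independent vectors in $\Gamma^\perp$ and $n \le \dim \Gamma^\perp = r - 3$. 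Therefore $|X| \le 3 + (r-3) + 3 + 3 = r + 6$. Combining the two cases gives $|X| \le \max\{165,\, r+6\}$, as claimed; the degenerate configurations (fewer than three nonempty pillars, or all three of size $\le 3$) fall under Case~2 and are likewise dominated by this bound.

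As for where the difficulty lies: the substantive content has already been absorbed into Lemma~\ref{lem:two-X31}, whose proof is the linear-programming and finite-search argument bounding a $(3,1)$ pillar by $54$ in the presence of another with at least four vectors. Given that lemma, the only genuinely separate point is Case~2, where the pillars are too small for the lemma to bite and one must instead fall back on the crude dimension count in $\Gamma^\perp$; the rest is routine bookkeeping to assemble the two cases into $\max\{165, r+6\}$.
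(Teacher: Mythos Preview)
Your proof is correct and follows essentially the same approach as the paper: split according to whether at least two $(3,1)$ pillars have $\ge 4$ vectors, apply Lemma~\ref{lem:two-X31} in the first case to get $3 + 3\cdot 54 = 165$, and use the linear independence of the $\hat c_i$'s in $\Gamma^\perp$ in the second case to get $r+6$. You add a couple of justifications the paper leaves implicit (why only $(3,1)$ pillars occur, and why restricting to four vectors of a larger pillar is harmless for invoking the lemma), but the structure is identical.
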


\begin{proof}
The equiangular set $X$ is decomposed as a disjoint union of $P = \{ p_1, p_2, p_3 \}$ and three $(3,1)$ pillars.
If there are two $(3,1)$ pillars with four or more vectors, then by Lemma~\ref{lem:two-X31} we have
\begin{equation*}
|X| = |P| + |X(3,1)| \leq 3 + 54 \cdot 3 = 165.
\end{equation*}
Otherwise there is only one big $(3,1)$ pillar and the other two pillars can have at most $3$ vectors each.
Since vectors in a single $(3,1)$ pillar is linearly independent of rank $r-3$, we see that in this case
\begin{equation*}
|X| = |P| + |X(3,1)| \leq 3 + (r-3) + 3 + 3 = r + 6.
\end{equation*}
These inequalities finish the proof of the theorem.
\end{proof}

Note that $\max \{165, r + 6 \}$ is certainly less than the bound $\max \{ 276, r + 1 + \lfloor \frac{r-5}{2} \rfloor \}$ given in the Lemmens-Seidel conjecture for every $r \geq 23$,
hence we have finished the proof when the base size $K(X) = 3$.

\subsection{\texorpdfstring{$K = 4$}{K=4}}
\label{sec:a5-k4}

King and Tang (\cite{king2016}, Lemma~16) showed that $|\bar{x}| \leq 36$ for a $(4,1)$ pillar $\bar{x}$ if there is another nonempty $(4,1)$ pillar $\bar{x}$.
We get a better upper bound for $|\bar{x}|$ for $\bar{x} \in X(4,1)$ if there is another nonempty $(4,1)$ pillar $\bar{u}$ with two or more vectors by applying Theorem~\ref{thm:2vec}.
In the situation $n = 2$, so the maximum of $|\bar{x}|$ is $2n^2 (n+1) = 24$ if there is another $(4,1)$ pillar with two or more vectors.
Hence we have the following result.
\begin{prop}
\label{prop:a5-41}
In an equiangular set $X$ with angle $\frac15$ and the base size $K(X) = 4$ in $\mathbb R^r$,
the maximum number of vectors that are contained in the four $(4,1)$ pillars is $\max \{ 96, r-1 \}$.
\end{prop}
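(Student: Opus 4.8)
The plan is to combine Theorem~\ref{thm:2vec}, applied with $n = 2$, with the orthogonality of the $c$-components inside a single $(4,1)$ pillar that was established in the discussion preceding Theorem~\ref{thm:2vec}. First I would record the structural setup: since $\alpha = \frac15$ we have $\frac1\alpha + 1 = 6 > 4 = K$, so by Proposition~\ref{prop:k-a1} the base $P = \{p_1,\dots,p_4\}$ is linearly independent; hence the subspace $\Gamma$ spanned by $P$ has dimension $4$ and $\Gamma^\perp$ has dimension $r - 4$. Because $K \neq \frac1\alpha + 1$, the vectors of $X \setminus P$ split into $(4,1)$ and $(4,2)$ pillars, and there are at most $\binom{4}{1} = 4$ distinct $(4,1)$ pillars; denote their union by $X(4,1)$.

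Next I would observe that any \emph{single} $(4,1)$ pillar contains at most $r - 4$ vectors: by the computation carried out for $n = 2$ just before Theorem~\ref{thm:2vec}, the normalized components $\hat{c}_i$ of the vectors of one $(4,1)$ pillar are pairwise orthogonal unit vectors lying in $\Gamma^\perp \cong \mathbb R^{r-4}$, so there can be at most $r-4$ of them.

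The proof then proceeds by a case split on how many $(4,1)$ pillars are \emph{large}, meaning they contain at least two vectors. If at least two of them, say $\bar{a}$ and $\bar{b}$, are large, then applying Theorem~\ref{thm:2vec} with $n = 2$ --- once using that $\bar{a}$ has two or more vectors, and once using that $\bar{b}$ does --- bounds every $(4,1)$ pillar, including $\bar{a}$ and $\bar{b}$ themselves, by $2n^2(n+1) = 24$; summing over the at most four such pillars gives $|X(4,1)| \leq 4 \cdot 24 = 96$. If at most one $(4,1)$ pillar is large, then that pillar has at most $r - 4$ vectors by the orthogonality bound above, while each of the remaining (at most three) pillars contributes at most one vector, so $|X(4,1)| \leq (r-4) + 3 = r - 1$. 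Taking the larger of the two cases yields $|X(4,1)| \leq \max\{96, r-1\}$, which is the asserted bound; exhibiting a lone pillar of size $r-4$ together with three singletons, respectively four pillars of size $24$, shows that neither expression can be lowered in general.

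I do not expect a genuine obstacle here: the substantive work is already done in Theorem~\ref{thm:2vec} and in the orthogonality computation of Section~\ref{sec:schur}, so what remains is bookkeeping. The only point that needs care is the case split --- Theorem~\ref{thm:2vec} bounds only the $(4,1)$ pillars \emph{other} than the one assumed to contain two or more vectors, which is exactly why two distinct large pillars are needed before all four can be bounded simultaneously, and why the lone-large-pillar scenario must instead be controlled by the dimension bound $r - 4$ coming from Section~\ref{sec:schur}.
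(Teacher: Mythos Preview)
Your argument is correct and matches the paper's proof essentially line for line: the same case split on whether at least two $(4,1)$ pillars contain two or more vectors, the same application of Theorem~\ref{thm:2vec} with $n=2$ to obtain the bound $24$ per pillar in the first case, and the same use of the orthogonality of the $\hat{c}_i$'s in $\Gamma^\perp \cong \mathbb R^{r-4}$ to cap the lone large pillar at $r-4$ in the second case. Your explicit remark that two distinct large pillars are needed to bound all four via Theorem~\ref{thm:2vec} makes the logic a bit more transparent than the paper's terse version.
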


\begin{proof}
If there are two $(4,1)$ pillars with two or more vectors,
there are at most $24 \times 4 = 96$ vectors in those pillars.
Otherwise, there can be one large pillar $\bar{x}$ together with three other pillars each of which contains at most one vector.
In the case, since the vectors in $\bar{x}$ are linearly independent in the $(r-4)$-dimensional subspace $\Gamma^\perp$, the number of vectors in these $(4,1)$ pillars is at most $(r-4) + 3 = r - 1$.
\end{proof}

\noindent
{\bf Remark.} Under computations similar to Theorem~\ref{thm:2vec}, we find that if there are two nonempty $(4,1)$ pillars, then another $(4,1)$ pillar can hold at most $25$ vectors.
Hence in the case where there is only one large pillar of size $r-4$ in Proposition~\ref{prop:a5-41},
there can only be one other nonempty $(4,1)$ pillar consisting of one vector when $r - 4 > 25$, i.e., $r \geq 30$.

For each of the three $(4,2)$ pillars, the best known bound of its cardinality is $s(r-4, \frac{1}{13}, -\frac{5}{13})$ obtained in~\cite{king2016}, which denotes the number of vectors in a $2$-distance set in $\mathbb R^{r-4}$ with angles $\frac{1}{13}$ and $-\frac{5}{13}$.
With a little improvement under Proposition~\ref{prop:a5-41}, we state the result for $K = 4$.

\begin{prop}
\label{prop:a5-412}
Let $X$ be an equiangular set with the angle $\frac{1}{5}$ and base size $4$ in $\mathbb R^r$.
Then
\begin{equation}
\label{eq:a5-k4-100}
|X| \leq 100 + 3 \cdot s\bigl(r-4, \frac{1}{13}, -\frac{5}{13} \bigr).
\end{equation}
\end{prop}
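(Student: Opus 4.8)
The plan is to run the pillar decomposition of $X$ with respect to a fixed $4$-base $P=\{p_1,p_2,p_3,p_4\}$. Since $\alpha=\frac15$ and $K=4$, the set $X\setminus P$ splits into at most four $(4,1)$ pillars and at most three $(4,2)$ pillars, all of whose $c$-vectors lie in the $(r-4)$-dimensional space $\Gamma^\perp$, so $|X|=4+|X(4,1)|+|X(4,2)|$. First I would dispose of the $(4,2)$ pillars. Solving $G\mu=\alpha\varepsilon$ for the $\Gamma$-component $h$ of a vector in a $(4,2)$ pillar (just as at the start of Section~\ref{sec:schur} for $(4,1)$ pillars) gives $\|h\|^2=\frac2{15}$ and $\|c\|^2=\frac{13}{15}$, and for two vectors in the same $(4,2)$ pillar one finds $\langle\hat c,\hat c'\rangle\in\{\frac1{13},-\frac5{13}\}$. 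Hence each $(4,2)$ pillar has at most $s(r-4,\frac1{13},-\frac5{13})$ vectors, so $|X(4,2)|\le 3\,s(r-4,\frac1{13},-\frac5{13})$. Feeding in $|X(4,1)|\le\max\{96,r-1\}$ from Proposition~\ref{prop:a5-41}, one already gets $|X|\le\max\{100,r+3\}+3\,s(r-4,\frac1{13},-\frac5{13})$, which is the asserted bound whenever $r\le 97$.

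The remaining case $r\ge 98$ requires a sharper estimate on $X(4,1)$ when a $(4,2)$ pillar is present, which is where the ``little improvement under Proposition~\ref{prop:a5-41}'' enters. By the proof of Proposition~\ref{prop:a5-41}, if two distinct $(4,1)$ pillars each have at least two vectors then $|X(4,1)|\le 96$ and we are done at once; so we may assume there is at most one non-singleton $(4,1)$ pillar, say $\bar x=\{h_1+c_1,\dots,h_1+c_m\}$, and then $|X(4,1)|\le m+3$. Because $\alpha=\frac15$, $K=4$ satisfy the hypotheses of Section~\ref{sec:schur}, the $c$-vectors of a $(4,1)$ pillar are mutually orthogonal, so $\{\hat c_1,\dots,\hat c_m\}$ is an orthonormal system in $\Gamma^\perp$.

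The \emph{key step} is to show: if some $(4,2)$ pillar is non-empty, then $m\le 39$. Take any $y=h_y+c_y$ in a non-empty $(4,2)$ pillar. Using $\|c_i\|^2=\frac45$, $\|c_y\|^2=\frac{13}{15}$, and $\langle h_1,h_y\rangle\in\{\frac1{15},-\frac1{15}\}$ (the value being determined solely by whether the two defining sign patterns differ in one coordinate or in three), one computes $\langle\hat c_i,\hat c_y\rangle\in\{\pm\frac1{\sqrt{39}},\pm\frac2{\sqrt{39}}\}$ for every $i$; in particular this inner product is never $0$ and $\langle\hat c_i,\hat c_y\rangle^2\ge\frac1{39}$. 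Bessel's inequality for the orthonormal system $\{\hat c_i\}$ then gives $\frac m{39}\le\sum_i\langle\hat c_i,\hat c_y\rangle^2\le\|\hat c_y\|^2=1$, i.e.\ $m\le 39$. Consequently $|X(4,1)|\le 42$ whenever any $(4,2)$ pillar is non-empty, and then $|X|\le 4+42+3\,s(r-4,\frac1{13},-\frac5{13})\le 100+3\,s(r-4,\frac1{13},-\frac5{13})$. If on the other hand every $(4,2)$ pillar is empty, then $|X|=4+|X(4,1)|\le r+3$; since $s(r-4,\frac1{13},-\frac5{13})\ge r-4$ ($r-4$ unit vectors in $\mathbb R^{r-4}$ with all pairwise inner products $\frac1{13}$ already form such a set, their Gram matrix being positive definite), we obtain $100+3\,s(r-4,\frac1{13},-\frac5{13})\ge 100+3(r-4)\ge r+3$. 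This settles every case.

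I expect the \emph{key step} to be the main obstacle: the argument collapses unless $\langle\hat c_i,\hat c_y\rangle$ is bounded away from $0$, and establishing this means computing $\langle h_1,h_y\rangle$ correctly for both the Hamming-distance-$1$ and Hamming-distance-$3$ configurations of the sign patterns and checking that, together with $\langle x_i,y\rangle=\pm\frac15$, the resulting values $\frac2{15},-\frac4{15},\frac4{15},-\frac2{15}$ of $\langle c_i,c_y\rangle$ are all nonzero. Everything else is routine bookkeeping between Proposition~\ref{prop:a5-41}, the elementary $(4,2)$-pillar bound, and the orthogonality of $c$-vectors inside a $(4,1)$ pillar from Section~\ref{sec:schur}.
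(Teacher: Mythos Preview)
Your argument is correct and follows the same pillar-decomposition strategy as the paper: split $X\setminus P$ into $(4,1)$ and $(4,2)$ pillars, bound each $(4,2)$ pillar by $s(r-4,\tfrac1{13},-\tfrac5{13})$, and control $|X(4,1)|$ separately. The one substantive difference is your ``key step''. The paper simply invokes Lemma~16 of King--Tang for the bound $|\bar x|\le 39$ on a $(4,1)$ pillar in the presence of a nonempty $(4,2)$ pillar, whereas you re-derive that bound from scratch via Bessel's inequality, using the orthonormality of the $\hat c_i$ inside a $(4,1)$ pillar (Section~\ref{sec:schur}) together with the computation $\langle\hat c_i,\hat c_y\rangle\in\{\pm\tfrac1{\sqrt{39}},\pm\tfrac2{\sqrt{39}}\}$. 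Your derivation is self-contained and arguably cleaner; it is in fact essentially how the King--Tang lemma is proved. Your case analysis (empty versus nonempty $(4,2)$ pillars, and within the latter, two large $(4,1)$ pillars versus at most one) is also more explicit than the paper's somewhat compressed write-up, which jumps directly to $4+4\cdot 24+3s$ after mentioning the $39$ bound.
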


\begin{proof}
The equiangular set $X$ can be partitioned into the following pairwise disjoint subsets: the $4$-base $P$, four $(4,1)$ pillars, and three $(4,2)$ pillars.
By Lemma~16 of~\cite{king2016}, any $(4,1)$ pillar $\bar{x}$ will satisfy $| \bar{x} | \leq 39$ if there is a nonempty $(4,2)$ pillar.  Since $s(r-4, \frac{1}{13}, -\frac{5}{13}) \geq r - 4$ (which can be realized if all vectors within a $(4,2)$ pillar are linearly independent), we see that
\begin{align*}
|X| &\leq |P| + |X(4,1)| + |X(4,2)| \leq 4 + 4 \cdot 24 + 3 \cdot s(r-4, \frac{1}{13}, -\frac{5}{13}) \\
&= 100 + 3 \cdot s(r-4, \frac{1}{13}, -\frac{5}{13}).
\end{align*}
\end{proof}

Notice that the right-hand side of (\ref{eq:a5-k4-100}) will never beat the Lemmens-Seidel bound.
Details will be elaborated in Section~\ref{sec:closing}.

\subsection{\texorpdfstring{$K = 5$}{K=5}}
\label{sec:a5-k5}

Let $X \subset \mathbb R^r$ be an equiangular set with angle $\frac{1}{5}$ in $\mathbb R^r$, with the base size $K = K(X) = 5$.  
Let $P = \{ p_1, p_2, p_3, p_4, p_5 \}$ be a $5$-base in $X$.
With respect to $P$, $X \setminus P$ can be partitioned into $5$ possible $(5,1)$ pillars and $10$ possible $(5,2)$ pillars.
By carefully analyzing those pillars, we answer affirmatively to the Lemmens-Seidel conjecture for the case $K = 5$.

\begin{thm}
\label{thm:a5-k5}
Let $X$ be an equiangular set with angle $\frac{1}{5}$ and base size $K(X) = 5$ in $\mathbb R^r$.
\begin{enumerate}[$(1)$]
\item If there are two or more nonempty $(5,2)$ pillars, then $|X| \leq 272$.
\item If there is at most one nonempty $(5,2)$ pillar, then $|X| \leq \frac{4}{3} r + 12$.
\end{enumerate}
\end{thm}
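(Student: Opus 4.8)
The plan is to push the pillar analysis of the previous subsections one step further: fix a $5$-base $P = \{p_1,\dots,p_5\}$ (linearly independent, since $K < \tfrac1\alpha+1 = 6$) and study the at most five $(5,1)$ pillars and at most ten $(5,2)$ pillars simultaneously. First I would record the local geometry by solving the analogue of~(\ref{eq:k1-inner-matrix-eq}) for $\alpha=\tfrac15$, $K=5$. For a $(5,1)$ pillar this gives $\|h\|^2=\tfrac{7}{15}$, so by Proposition~\ref{prop:K1-indep} any two vectors in the same $(5,1)$ pillar have $\langle\hat c_i,\hat c_j\rangle=-\tfrac12$; since a symmetric matrix with $1$ on the diagonal and $-\tfrac12$ off the diagonal is positive semidefinite only in size $\le 3$, every $(5,1)$ pillar has at most $3$ vectors, hence $|X(5,1)|\le 15$. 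For a $(5,2)$ pillar one gets $\|h\|^2=\tfrac15$, whence within such a pillar $\langle\hat d_i,\hat d_j\rangle\in\{0,-\tfrac12\}$; I would also compute the cross inner products between normalized components of vectors in different pillars (they depend only on the Hamming distance of the sign patterns: $-\tfrac14$ or $-1$ between two $(5,1)$ pillars, one of $\pm\tfrac16,\pm\tfrac13$ between two $(5,2)$ pillars, and the corresponding values in the mixed case). Thus the only pillars that can be large are the $(5,2)$ pillars, in analogy with the single ``big'' $(3,1)$ pillar in the $K=3$ case.

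For statement~$(2)$, with one nonempty $(5,2)$ pillar $\bar u$, the key point is that the graph $H$ on $\bar u$ defined by $x\sim y \iff \langle x,y\rangle=-\tfrac15$ is triangle-free: a triangle, together with the three base vectors to which every vector of $\bar u$ is Seidel-adjacent, would form a $6$-clique in the Seidel graph, contradicting $K(X)=5$. Since the Gram matrix of the $\hat d_i$ is $I-\tfrac12 A(H)$, positive semidefiniteness forces the spectral radius of $H$ to be $\le 2$, so each component of $H$ is one of the diagrams in Figure~\ref{fig:graphtypes} or a subdiagram (a finite Dynkin diagram of type $A$, $D$, $E$). The $\hat d_i$ of different components are orthogonal, so the rank $\rho(\bar u)$ of the $d$-vectors in $\Gamma^\perp$ equals $|\bar u|$ minus the number of affine components; a triangle-free affine component contributes at least $3$ to the rank (the extreme case being $\widetilde A_3=C_4$, with four vertices and rank three), so $|\bar u|\le\tfrac43\rho(\bar u)\le\tfrac43(r-5)$. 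Combining $|X|=5+|X(5,1)|+|\bar u|$ with a careful accounting of how the spans of the $(5,1)$-pillar components and of $\bar u$ jointly occupy the $(r-5)$-dimensional space $\Gamma^\perp$ then yields $|X|\le\tfrac43 r+12$.

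For statement~$(1)$, I would establish a uniform bound: if two $(5,2)$ pillars are nonempty, each $(5,2)$ pillar has at most a constant number of vectors, independent of $r$. This follows the template of Theorem~\ref{thm:2vec} and Lemma~\ref{lem:two-X31}: write the Gram matrix of the normalized $d$-vectors of one $(5,2)$ pillar together with a few vectors from another, apply the Schur complement (Theorem~\ref{thm:schur-decomp}) to eliminate the first block, translate positive semidefiniteness into inequalities among the counts of how the vectors attach to one another, and solve the resulting integer linear program; the internal structure of each pillar is still governed by the ADE classification above. Feeding this constant into $|X|=5+|X(5,1)|+|X(5,2)|$ with $|X(5,1)|\le 15$ and at most ten $(5,2)$ pillars gives a bound close to $272$, and the uniqueness of the two-graph on $276$ vertices finishes the job: a near-extremal equiangular set with angle $\tfrac15$ would essentially reproduce that unique configuration, whose base size is $6$, contradicting $K(X)=5$; this eliminates the remaining configurations and leaves $|X|\le 272$.

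The main obstacle is twofold. In $(1)$, the Schur-complement step is more delicate than for $(K,1)$ pillars because the diagonal block (the Gram matrix of the $\hat d_i$ within a $(5,2)$ pillar) is in general singular, so one must work with a well-chosen nonsingular principal submatrix or a pseudo-inverse, and the resulting integer program has many more variables than in the $(3,1)$ case, making the extraction of the sharp constant a substantial bookkeeping and computer-search task. In $(2)$, the subtlety is that the $c$-vectors of different pillars are not orthogonal (their inner products are $-\tfrac14$, $-1$, or one of $\pm\tfrac16,\pm\tfrac13$, and irrational values in the mixed case), so the ranks of the various pillar components do not simply add; tightening the estimate to the exact constant $12$ requires understanding precisely how these components can share directions in $\Gamma^\perp$.
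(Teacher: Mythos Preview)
Your treatment of part~(2) is essentially the paper's. Both arguments rest on: (a) the Seidel graph of a $(5,2)$ pillar is triangle-free, since a $3$-clique together with the three base vectors to which the pillar is adjacent would be a $6$-clique, contradicting $K(X)=5$; (b) the spectral-radius bound forcing each component to be an affine ADE diagram or a proper subgraph thereof; (c) the count that every triangle-free affine component has at least four vertices, giving $|\bar u|\le\tfrac43\rho(\bar u)$. You work with the Gram matrix $I-\tfrac12 A(H)$ of the normalized $\Gamma^\perp$-components, while the paper uses the Gram matrix $\tfrac15 J+\tfrac45 I-\tfrac25 A$ of the vectors $x_i$ themselves; the two differ by the rank-one $J$-term, and the nullities ($\ell$ versus $\ell-1$) are consistent with the extra $h$-direction. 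The paper then simply bounds the rank of $\bar u$ by $r-5$ and adds $5+15$; no ``careful accounting'' of how the $(5,1)$ pillars share $\Gamma^\perp$ with $\bar u$ is attempted or needed.

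For part~(1), your plan misses the decisive idea and would not reach $272$. The paper does \emph{not} run a Schur-complement programme on pairs of $(5,2)$ pillars. Instead it adjoins $p_6=-(p_1+\cdots+p_5)$ to the base, so that $Y=P\cup\{p_6\}\cup X(5,2)$ is an equiangular set with base size~$6$, and the $(5,2)$ pillars of $X$ become $(6,3)$ pillars of $Y$. This places the problem inside Lemmens and Seidel's own analysis of the $K=6$ case (the proof of their Theorem~5.7): with at least two nonempty $(6,3)$ pillars one has $|Y|\le 258$, or else $Y$ sits inside the unique $276$-line configuration $Z\subset\mathbb R^{23}$. The paper then writes $Z$ down explicitly via the Witt design $S(5,8,24)$, exhibits a $6$-base for which each of the ten $(6,3)$ pillars of $Z$ is a disjoint union of nine triangles (ninety triangles in all), and observes that if $|Y|>258$ then at most seventeen pillar vectors are missing, so some triangle survives intact; that triangle together with three of $p_1,\dots,p_5$ is a $6$-clique in the Seidel graph of $X$, contradicting $K(X)=5$. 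Hence $|Y|\le 258$ and $|X|\le 257+15=272$. Your Schur-complement route would at best produce a per-pillar constant (the true value in $Z$ is $27$), giving $5+15+10\cdot 27=290$, and your closing appeal to uniqueness (``a near-extremal set would essentially reproduce the $276$-configuration'') is precisely the step that the paper makes rigorous through the $p_6$ embedding and the explicit triangle count; without that device there is no clear mechanism to descend from a crude bound to $272$.
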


\begin{proof}
By Lemma 18 of~\cite{king2016}, we know that $|X(5,1)| \leq 15$.
Let us now consider the rest of the vectors $P \cup X(5,2)$.
Note that $Y := P \cup \{ p_6 \} \cup X(5,2)$ is still an equiangular set with $K(Y) = 6$ in $\mathbb R^r$,
where $p_6 = - \sum_{i=1}^5 p_i$.
Those $(5,2)$ pillars in $X$ will become $(6,3)$ pillars in $Y$, and their classifications have been discussed thoroughly by Lemmens and Seidel~\cite{lemmens1973}.
Let us recall a key fact found in the proof of Theorem~5.7 of~\cite{lemmens1973}.

\begin{lem}[\cite{lemmens1973}]
	\label{lem:LS-5.7}
	Let $Y$ be an equiangular set with angle $\frac{1}{5}$ and base size $K(Y) = 6$.
    Let $P_Y$ be a $6$-base in $Y$ and $Y$ be decomposed into $P_Y$ and various $(6,3)$ pillars.
    Suppose there are at least two nonempty pillars in $Y$.
    \begin{enumerate}[(i)]
    \item If there are two distinct pillars each of which contains a pair of adjacent vertices, then $|Y| \leq 276$.
    \item If there is only one pillar containing a pair of adjacent vertices and all other pillars contain independent vertices only, then $|Y| \leq 222$.
    \item If each of these nonempty pillars contains independent vertices only, then $|Y| \leq 258$.
    \end{enumerate}
\end{lem}

If there are two or more nonempty $(6,3)$ pillars and $|Y| > 258$,
then $Y$ must be a subset of the equiangular set $Z$ with $276$ lines in $\mathbb R^{23}$ with a $6$-base $P \cup \{ p_6 \}$ by Lemma~\ref{lem:LS-5.7}.
Goethals and Seidel~\cite{goethals1975} proved that the structure of these $276$ equiangular lines is unique, i.e., there is only one such switching class.
Here we need an explicit description of these lines.
The following detailed information can be found in~\cite{taylor1971,neumaier1984some}.
Let $\mathfrak W$ be the collection of $759$ $8$-subsets of $\{1, 2, \dots, 24 \}$ that comes from the Steiner triple system $S(5,8,24)$ (or the Witt design~\cite{witt1937}), and $\mathfrak W_1$ be the subcollection of $\mathfrak W$ consisting of those $253$ $8$-subsets that contains $1$\footnote{The complete list of these 253 $8$-subsets of $[24]$ can be found at \url{http://math.ntnu.edu.tw/\~yclin/253-8.txt}.}. 
For any $\sigma \in \mathfrak W_1$, define $w_\sigma$ be the vector in $\mathbb R^{24}$:
\begin{equation*}
w_\sigma := 4 \sum_{i \in \sigma} e_i - 4 e_1 - \sum_{j=1}^{24} e_j.
\end{equation*}
For each $k \in \{ 2, 3, \dots, 24 \}$, let $v_k := 4 e_1 + 8 e_k - \sum_{j=1}^{24} e_j$
(with $e_1, \dots, e_{24}$ being the standard basis for $\mathbb R^{24}$).
Thus
\begin{equation*}
Z_0 := \{ w_\sigma \colon \sigma \in \mathfrak W_1 \} \cup \{ v_k \colon k = 2, 3, \dots, 24 \}
\end{equation*}
gives rise to the $276$ equiangular set with angle $\frac{1}{5}$.
Note that all these $276$ vectors lie in the hyperplane $5 x_1 + \sum_{j=2}^{24} x_j = 0$,
and it is easy to see that $v_2, \dots, v_{24}$ are linearly independent,
so the span of $Z_0$ is of dimension $23$.
Consider the following $6$ elements from $\mathfrak W_1$:
\begin{equation*}
\begin{array}{ll}
\sigma_1 = \{ 1, 2, 5, 8, 13, 15, 18, 20 \}, & \sigma_4 = \{ 1, 2, 5, 8, 9, 11, 22, 24 \}, \\
\sigma_2 = \{ 1, 2, 3, 4, 9, 10, 11, 12 \}, &  \sigma_5 = \{ 1, 2, 3, 4, 17, 18, 19, 20 \}, \\
\sigma_3 = \{ 1, 3, 5, 7, 17, 19, 22, 24 \}, & \sigma_6 = \{ 1, 3, 5, 7, 10, 12, 13, 15 \}.
\end{array}
\end{equation*}
and define
\begin{equation*}
p_i = \left\{
\begin{array}{ll}
\widehat{w}_{\sigma_i},  & \text{if $i = 1, 2, 3$,} \\
-\widehat{w}_{\sigma_i}, & \text{if $i = 4, 5, 6$.}
\end{array}
\right.
\end{equation*}
Then the unit vectors $p_1, \dots, p_6$ have mutual inner products $-\frac{1}{5}$.
For the remaining $270$ vectors from $Z_0 \setminus \{ \pm p_i \colon i = 1, \dots, 6 \}$, we normalize them and pick a suitable direction for each vector so that the resulting unit vectors all have inner products $\frac{1}{5}$ with $p_6$.
Then these vectors have a pillar decomposition

\begin{equation*}
Z = \{ p_1, \dots, p_6 \} \cup \bigcup_{i=1}^{10} \bar{z}_i,
\end{equation*}
where each $\bar{z}_i$ is a $(6,3)$ pillar consisting of $27$ unit vectors, whose Seidel graph is a disjoint union of nine $3$-cliques.
So there are $90$ $3$-cliques upstairs in the pillars.
By uniqueness, we can assume that the set $Y$ above is a subset of $Z$ which contains the base set $p_1, \dots, p_6$.
If $|Y| > 258$, then $Y$ misses at most $17$ vectors in the pillars upstairs, therefore $Y$ must
contain at least one of those $90$ $3$-cliques.
Such a $3$-clique, together with the $3$ vectors in the base $p_1, \dots, p_5$ to which all vertices of this $3$-clique connect, will form a $6$-clique in $X$,
that is, $K(X) = 6$, which contradicts to the definition $K(X) = 5$.
Therefore if there are two nonempty $(5,2)$ pillars, then $|X| = |Y| - 1 + |X(5,1)| \leq 258 - 1 + 15 = 272$.  This finishes the first part of the proof.

Now let us assume that there is exactly one nonempty $(5,2)$ pillar $\bar{x}$.
There is an upper bound for the size of $|\bar{x}|$ in terms of the rank of $\bar{x}$.

\begin{lem}
\label{lem:a5k5-one}
Let $\bar{x}$ be a $(5,2)$ pillar.
Then $|\bar{x}| \leq \frac{4}{3} (d - 1)$, where $d = \op{rank}(\bar{x})$.
\end{lem}

\begin{proof}
Let $S$ be the Seidel graph of $\bar{x}$.
We first claim that $S$ does not contain any $3$-clique.
Let $P$ be the base set of $X$.
By the definition of $(5,2)$ pillars, there are three vectors in $P$ such that all vectors in $\bar{x}$ are independent to them in the Seidel graph $S(X)$ of $X$.
If there is a $3$-clique in $S$, then together with those three vectors in $P$,
they would form a complete bipartite graph $K_{3,3}$ in $S(X)$,
which is switching equivalent to $K_6$.
This contradicts to the assumption that the base size of $X$ equals $5$.

We consider $Y = P \cup \{ p_6 \} \cup \bar{x}$ again.
Now $\bar{x}$ becomes a $(6,3)$ pillar in $Y$.
By Theorem~5.1 of~\cite{lemmens1973}, any connected component of the Seidel graph $S$ of $\bar{x}$ is a subgraph of one of the graphs in Figure~\ref{fig:graphtypes}, which are those connected graphs with maximum eigenvalue $2$.
Let $S_1, \dots, S_k$ be the connected components of $S$, and $A_i$ be the adjacency matrix of $S_i$ for $i = 1, 2, \dots, k$.
The Gram matrix $G$ of $\bar{x}$ assumes the following form:
\begin{equation*}
G = \frac{1}{5} J + \frac{4}{5} I - \frac{2}{5} A, \qquad 
\text{where } A = A_1 \oplus A_2 \oplus \cdots \oplus A_k.
\end{equation*}

\fivegraphs

Let us now investigate the nullity of $G$.
We have
\begin{equation}
\label{eq:decomp-G}
x^{\ol{T}} G x = \frac{1}{5} x^{\ol{T}} J x + \frac{2}{5} x^{\ol{T}} (2I - A) x.
\end{equation}
The all-one matrix $J$ is already positive semidefinite;
so is $2I - A$, because the maximum eigenvalue of $A$ is at most $2$.
If $G x = 0$, then $x^{\ol{T}} G x = 0$;
using (\ref{eq:decomp-G}) we see that $x^{\ol{T}} J x = 0 = x^{\ol{{T}}} (2I - A) x$ as well.
The equation $x^{\ol{T}} J x = 0$ implies that the sum of the coordinates of $x$ vanishes.
As $A$ is the direct sum of the $A_i$'s, we will investigate $2I - A_i$ separately.
If $S_i$ is a proper subgraph of the five graphs listed in Figure~\ref{fig:graphtypes},
then the largest eigenvalue of $A_i$ is strictly less than $2$ by strict monotonicity~\cite{smith1970some}, and hence $2I - A_i$ is positive definite.
Therefore $v^{\ol{T}} (2I - A_i) v = 0$ implies that $v = 0$.
After relabeling, assume that $S_1$, \dots, $S_\ell$ are the components among $S_1$, \dots, $S_k$ that are listed in Figure~\ref{fig:graphtypes}.
For each $A_j$, $1 \leq j \leq \ell$, let $v_j$ be the (unique) unit eigenvector of $A_j$ with eigenvalue $2$ whose coordinates are all positive.
Then that $x^{\ol{T}} (2I - A) x = 0$ implies that $x$ lies in the span of $\tilde{v}_1$, \dots, $\tilde{v}_\ell$, where $\tilde{v}_j$ is the image of $v_j$ under the embedding induced by $A_j \rightarrow A_1 \oplus \cdots \oplus A_k$, $j = 1, \dots, \ell$.
The vectors $\tilde{v}_1$, \dots, $\tilde{v}_\ell$ are clearly linearly independent, and since their coordinates are all nonnegative, we conclude that after intersecting with the subspace $x^{\ol{T}} J x = 0$, the nullity of $G$ equals $\ell - 1$.

Except for $K_3 = C_3$, the graphs listed in Figure~\ref{fig:graphtypes} have at least $4$ vertices.
Therefore $4\ell \leq m$, where $m = |\bar{x}|$.
Denoting the rank of $\bar{x}$ by $d$, we have
\begin{equation*}
m - d = \op{null}(G) = \ell - 1 \leq \frac{m}{4} - 1 \qquad \Rightarrow \qquad
m \leq \frac{4}{3} (d-1),
\end{equation*}
which is exactly what we want to show.
\end{proof}

Back to the proof of Theorem~\ref{thm:a5-k5}.
Suppose $\bar{x}$ is the unique nonempty $(5,2)$ pillar in $X \subset \mathbb R^r$.
Since $\dim \bar{x} = \dim X - 5 = r - 5$,
we have $|\bar{x}| \leq \frac{4}{3} (r - 5 - 1) = \frac{4}{3} (r - 6)$ by Lemma~\ref{lem:a5k5-one}.
Together with $P$ and the vectors in $(5,1)$ pillars, we find
\begin{equation*}
  |X| = |P| + |X(5,1)| + |X(5,2)| \leq 5 + 15 + \frac{4}{3} (r - 6) = \frac{4}{3} r + 12,
\end{equation*}
and the proof is now completed.
\end{proof}

It is easily verified that $\max \{ 272, \frac{4}{3} r + 12 \} \leq \max \{ 276, r + 1 + \lfloor \frac{r-5}{2} \rfloor \}$ for any $r \in \mathbb N$.
Hence we have proven the Lemmens-Seidel conjecture for the case $K = 5$.


\section{Maximum equiangular sets of certain ranks}
\label{sec:max-rank}

Besides the maximum cardinality of equiangular sets in $\mathbb R^r$,
Glazyrin and Yu considered a similar question in~\cite{glazyrin2018upper}.
\begin{defn}
	Let $r$ be a positive integer. 
    We define the number $M^*(r)$ to be the maximum cardinality of equiangular lines of rank $r$.
\end{defn}

For example, we know that maximum size of equiangular line in $\mathbb{R}^8$ is 28. However, such $28$ equiangular lines in $\mathbb R^8$ actually live in a $7$-dimensional subspace by the Theorem 4 in~\cite{glazyrin2018upper}, yet $M^*(8)$ is unknown.
It is well known that $M^*(7) = 28$ and $M^*(23) = 276$.
It seems that $M^*(r)$ is an increasing function on $n$, but
Glazyrin and Yu~\cite{glazyrin2018upper} refuted this by showing $M^*(24) < 276 = M^*(23)$.
Moreover, not every value of $M^*(r)$ is known even for small $r$ in the literature, for instance $M^*(8)$.

We first deal with $M^*(8)$ and start with the following result.
The main technique of identifying saturated equiangular sets can be found in the authors' previous work~\cite{lin2018saturated}.

\begin{prop}
\label{prop:8-14}
	There are at most $14$ equiangular lines of angle $\frac13$ of rank $8$.
\end{prop}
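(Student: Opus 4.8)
The plan is to run the pillar decomposition with respect to a base of maximal size. Since $\tfrac1\alpha+1=4$ for $\alpha=\tfrac13$, Proposition~\ref{prop:k-a1} forces the base size $K=K(X)$ of a $\tfrac13$-equiangular set $X$ to lie in $\{2,3,4\}$. If $K=2$ I am done at once: $|X|\le r=8$ by Corollary~2 of~\cite{king2016}, applied inside the $8$-dimensional span of $X$. So I would treat $K=3$ and $K=4$ separately, fixing a $K$-base $P$, writing $\Gamma=\operatorname{span}P$, and using throughout that $\Gamma^\perp$ is $5$-dimensional (because $\dim\Gamma=3$ in both cases and $X$ has rank $8$), so that the $\Gamma^\perp$-components of $X\setminus P$ span all of $\Gamma^\perp$.

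For $K=3$ the $p_i$ are linearly independent and $X\setminus P$ breaks into at most three $(3,1)$-pillars. A short computation in the style of Section~\ref{sec:a5-k3} gives $\|h\|^2=\tfrac13$ for a member $x=h+c$ of such a pillar, and also $\langle h,h'\rangle=0$ for the $h$-parts of two distinct pillars. Combined with Proposition~\ref{prop:K1-indep}, this shows that the unit $c$-vectors of a single pillar form an orthonormal set $D_i$ — so that pillar's size equals its rank $d_i=|D_i|$ — while unit $c$-vectors from different pillars have inner product $\pm\tfrac12$. Writing $P_i$ for the orthogonal projection onto $\operatorname{span}D_i\subseteq\Gamma^\perp$, one gets $\operatorname{tr}(P_iP_j)=\tfrac14 d_id_j$; feeding this into $\dim(\operatorname{span}D_i\cap\operatorname{span}D_j)\le\operatorname{tr}(P_iP_j)\le\min(d_i,d_j)$ together with $\sum_i\operatorname{span}D_i=\Gamma^\perp\cong\mathbb R^5$ forces every $d_i\le 3$ once at least two pillars are nonempty (and $d_1\le 5$ if only one is). Hence $|X|\le 3+d_1+d_2+d_3\le 12<14$.

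For $K=4$ the base $P$ is a regular $4$-simplex spanning a $3$-space, and $X\setminus P$ breaks into at most three $(4,2)$-pillars. The same computation gives $\|h\|^2=\tfrac13$ again, but now two pillar members can have inner product $-\tfrac13$, so within a pillar the unit $c$-vectors are pairwise orthogonal or antipodal — the Seidel graph of a $(4,2)$-pillar is a matching plus isolated vertices — and counting the nullity of its Gram matrix yields $|\bar x|\le 2\operatorname{rank}(\bar x)$. If only one $(4,2)$-pillar is nonempty it has rank $5$, so $|X|\le 4+2\cdot5=14$; this is sharp (the ``fully doubled'' rank-$5$ pillar together with the $4$-simplex realizes $14$ lines of rank $8$), and I expect this to be the extremal configuration. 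In the multi-pillar case, $\langle h,h'\rangle=0$ across pillars again makes the distinct direction sets $D_i$ orthonormal with all cross inner products $\pm\tfrac12$, and the same trace/dimension argument forces each $d_i\le 3$ — but this only gives $|X|\le 4+2(3+3+3)=22$.

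\textbf{The hard part} is therefore the $K=4$ case with two or more nonempty $(4,2)$-pillars: closing the gap from $22$ to $14$. The coarse dimension count does not detect why three direction sets cannot simultaneously be large and heavily ``doubled,'' and to rule this out one must exploit the $\pm\tfrac12$ sign patterns between the $D_i$ — either through a linear-programming argument over those patterns constrained by positive semidefiniteness of the full Gram matrix (in the spirit of Lemma~\ref{lem:two-X31} and Theorem~\ref{thm:2vec}), or, more structurally, by showing that a $\tfrac13$-equiangular set of base size $4$ with more than $14$ vectors must embed into the unique $28$-line configuration in $\mathbb R^7$ (the $\tfrac13$-counterpart of the rôle played by the $276$-vertex two-graph in Section~\ref{sec:a5-k5}), which would contradict $\operatorname{rank}(X)=8$. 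Either route yields $|X|\le 14$, completing the proof.
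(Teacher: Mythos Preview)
Your structural pillar approach is attractive, and the $K=2$ and $K=3$ cases are handled correctly: in particular the trace argument $\operatorname{tr}(P_iP_j)=\tfrac14 d_id_j\le\min(d_i,d_j)$ combined with the fact that the $c$-vectors must span the $5$-dimensional $\Gamma^\perp$ really does force $d_i\le 3$ once two pillars are nonempty, and the $K=4$ single-pillar case genuinely attains the sharp bound $14$. But the proposal has a real gap exactly where you flag it: for $K=4$ with at least two nonempty $(4,2)$-pillars you only reach $22$, and the closing sentence ``Either route yields $|X|\le 14$'' is an assertion, not an argument. Neither route you sketch is routine. Route (a) would require you to actually set up and solve a positivity/linear-programming problem over the $\pm\tfrac12$ sign patterns between the (possibly doubled) direction sets, and the relevant constraints are quite different from those in Theorem~\ref{thm:2vec} or Lemma~\ref{lem:two-X31}. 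Route (b) presumes that every $\tfrac13$-equiangular set with $K=4$ and more than $14$ vectors embeds in the $28$-line system in $\mathbb R^7$; this is plausible but is itself a nontrivial classification statement in the spirit of Lemma~\ref{lem:LS-5.7}, not something you can simply invoke. As written, the multi-pillar $K=4$ case is unproved.

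For comparison, the paper's own proof is entirely different and avoids the pillar analysis altogether: it is a finite computer search. One enumerates, up to switching, all $8\times 8$ positive-definite Gram matrices with diagonal $1$ and off-diagonal $\pm\tfrac13$ (equivalently, the $1044$ graphs on seven vertices together with an isolated vertex), finds that only three survive, and then for each of these eight-vector bases computes the saturated equiangular extension by the method of~\cite{lin2018saturated}. The maximum extension has $14$ lines. So the paper trades your structural reasoning for an exhaustive check; your argument, if the gap were closed, would be conceptually cleaner and would explain \emph{why} $14$ is the bound, but as it stands it does not.
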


\begin{proof}
We first construct $8\times 8$ symmetric matrices whose diagonals are $1$, and $\pm \frac{1}{3}$ elsewhere.
By considering their switching classes, we may assume that the entries in the first column and the first rows are all $\frac13$, except that the top-left corner being $1$. 
Since these matrices are Gram matrices for some bases for $\mathbb R^8$, they are required to be positive definite.
The associated graph of such a matrix is a disjoint union of a graph of $7$ vertices and one isolated vertex.
By checking all $1044$ such graphs (see \cite{flajolet2009}, Example~II.5), we find that there are only $3$ graphs that satisfy all conditions listed above. 
For each of those $3$ graphs, we collect all the unit vectors whose mutual inner products with each vector represented by the graph are $\pm \frac{1}{3}$, and transform these vectors as vertices of a new graph in which two vectors are adjacent if and only if their mutual inner products are $\pm \frac{1}{3}$.
The clique number of the new graph plus $8$ will be the size of a saturated equiangular set, and we identify the maximum in these clique numbers.
Saturated equiangular sets containing these three sets of $8$ basis vectors consist of $8$, $14$, and $14$ lines respectively, from which we conclude that $M_{\frac{1}{3}}(8) = 14$.
\end{proof}

\noindent {\bf Remark.} (Uniqueness of maximum equiangular lines of angle $\frac{1}{3}$ of rank $8$) 
Among the $3$ graphs, found in the proof of Proposition~\ref{prop:8-14}, whose associated Gram matrices are positive definite, one contains $7$ independent vertices; adding another independent vertex to the other two graphs gives two graphs that are switching equivalent: $K_2$ and $6$ independent vertices; $K_{1,6}$ and one more independent vertex.
Both of them can only be added a $6$-clique to form $14$ equiangular lines in $\mathbb R^8$ of angle $\frac{1}{3}$, so this is the only isomorphism class of $14$ equiangular lines of angle $\frac{1}{3}$ of rank $8$.

\begin{table}
	\centering
    \caption{Maximum sizes of equiangular lines with specified angles for small ranks}
    \label{tb:smallMr}
    \begin{equation*}
    \begin{array}{r|cccc}
   		\text{angle } \alpha &  \frac13 & \frac15 & \frac17 & \frac{1}{\sqrt{17}} \\
		\hline
		M_\alpha(8)  & 14 & 10 &  9 \\ \hline
		M_\alpha(9)  & 16 & 12 & 10 & 18 \\ \hline
        M_\alpha(10) & 18 & 16
    \end{array}
    \end{equation*}
\end{table}

\vskip 0.1in

Lemmens and Seidel showed that $M_{\frac{1}{3}}(r) = 2r - 2$ for $r \geq 8$ (cf.~\cite{lemmens1973}, Theorem~4.5).
The same technique as in the proof of Proposition~\ref{prop:8-14} is applied to produce Table~\ref{tb:smallMr}.
We indicate that the technique in~\cite{lin2018saturated} is more powerful than semidefinite programming method in \cite{barg2014}. For instance, the semidefinite programming bound on equiangular sets with angle $\frac{1}{5}$ in $\mathbb{R}^8$ is $11.2$ and the technique in \cite{lin2018saturated} obtains the bound $10$.

Before we proceed further, we find the following generalization of the Neumann theorem (Theorem~\ref{thm:neumann}) is necessary.

\begin{thm}[Generalization of Neumann Theorem]\label{thm:general Neu}
Let $r>3$ be a positive integer.
If there are more than $2r-2$ equiangular lines with angle $\alpha$ in $\mathbb{R}^r$, then:
\begin{itemize}
\item When $r$ is even, $\dfrac{1}{\alpha}$ is an odd integer.
\item When $r$ is odd,  $\dfrac{1}{\alpha}$ is either an odd integer or $\sqrt{2r-1}$.
Moreover, any equiangular set with angle $\frac{1}{\sqrt{2r-1}}$ of size $2r-1$ in $\mathbb R^r$ is a subset of some equiangular tight frame of size $2r$ in $\mathbb R^r$. 
\end{itemize}
\end{thm}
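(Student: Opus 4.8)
The plan is to run the Neumann argument on the Seidel matrix and then settle the few algebraic shapes that remain for $1/\alpha$. Fix unit vectors representing the $n:=|X|$ lines, let $S$ be the resulting Seidel matrix, and note that $G=I+\alpha S$ is positive semidefinite of rank at most $r$, so its nullity $m=n-\op{rank}G$ satisfies $m\ge n-r\ge r-1$ and $-1/\alpha$ is the least eigenvalue of $S$, of multiplicity exactly $m$. Since $S$ has integer entries, $-1/\alpha$ is an algebraic integer; if it is rational it equals an integer $-k$, and if $k$ were even then reducing $\tfrac1\alpha G=S+kI$ modulo $2$ would give $J-I$, whose rank over $\mathbb F_2$ is at least $n-1>r\ge\op{rank}_{\mathbb Q}G$ --- impossible. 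So in the rational case $1/\alpha$ is odd, which settles both parities of $r$; from now on assume $1/\alpha\notin\mathbb Q$.

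Let $\mu\in\mathbb Z[x]$ be the minimal polynomial of $-1/\alpha$. Since $S$ is symmetric, $-1/\alpha$ is a root of the characteristic polynomial $p_S$ of $S$ of multiplicity $m$, so (by Gauss's lemma) $\mu^{m}$ divides $p_S$ in $\mathbb Z[x]$ and $(\deg\mu)\,m\le n$. Together with $m\ge n-r$ and $n\ge 2r-1>\tfrac{3r}{2}$ this forces $\deg\mu=2$ and $n\in\{2r-1,2r\}$, with $m=n-r$ and $\op{rank}G=r$; in particular a size-$(2r-1)$ set as in the ``moreover'' clause spans $\mathbb R^r$. Writing $\mu(x)=x^2+ax+b$, the spectrum of $S$ consists of the two roots of $\mu$, each of multiplicity $m$, plus one extra integer eigenvalue $\lambda_0$ precisely when $n=2r-1$. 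The identities $\op{tr}S=0$ and $\op{tr}S^2=\sum_{i\ne j}S_{ij}^2=n(n-1)$ then give $\lambda_0=(r-1)a$ and $\op{disc}\mu=a^2-4b=(2r-1)(4-a^2)$; positivity of the discriminant together with the irrationality of the roots leaves $a\in\{-1,0,1\}$, and integrality of $b$ shows $a=\pm1$ can occur only for even $r$. Moreover, when $a=-1$ the integer $\lambda_0=-(r-1)$ turns out to be (for $r>3$) the least eigenvalue of $S$, forcing $-1/\alpha=\lambda_0$, i.e.\ $1/\alpha=r-1\in\mathbb Q$, contrary to assumption --- so that subcase is vacuous.

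If $a=0$ then $\mu(x)=x^2-(2r-1)$, i.e.\ $1/\alpha=\sqrt{2r-1}$, and $G$ has eigenvalues $0$ and $2$, plus a simple eigenvalue $1$ when $n=2r-1$. In that case $2G-G^2$ is positive semidefinite of rank one, say $gg^{\ol{T}}$ with $\|g\|^2=1$; reading off the diagonal gives $g_i^2=2-\bigl(1+(n-1)\alpha^2\bigr)=\alpha^2$, so $g$ has all entries $\pm\alpha$, and the bordered matrix $\begin{bmatrix}G&g\\ g^{\ol{T}}&1\end{bmatrix}$ satisfies $(\,\cdot\,)^2=2(\,\cdot\,)$, hence is the Gram matrix of an equiangular set of $2r$ lines that is a tight frame --- an equiangular tight frame containing $X$. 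This proves the ``moreover'' clause, and in either case ($n=2r-1$ via this completion, or $n=2r$ directly) we produce a symmetric conference matrix of order $2r$, which exists only when $2r\equiv2\pmod4$, i.e.\ $r$ is odd. Hence $1/\alpha=\sqrt{2r-1}$ is impossible for even $r$ and genuinely possible for odd $r$.

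What remains --- and this is the step I expect to be the main obstacle --- is to exclude $a=1$ when $r$ is even, where $1/\alpha=\tfrac12\bigl(1+\sqrt{3(2r-1)}\bigr)$ is a genuine quadratic surd for infinitely many $r$ and the prescribed spectrum of $S$ (eigenvalues $\tfrac12(-1\pm\sqrt{3(2r-1)})$ of multiplicity $r-1$, and a simple eigenvalue $r-1$) is already consistent with $\op{tr}S=0$ and $\op{tr}S^2=n(n-1)$. The extra input has to come from finer integrality of $S$: since $n=2r-1$ is odd, every off-diagonal entry of $S^2-(n-1)I$ is odd, so $\op{tr}S^4=\op{tr}\bigl((S^2)^2\bigr)$ is constrained modulo a power of $2$; likewise $\op{tr}S^3\equiv0\pmod6$, since it counts signed triangles. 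Pitting these congruences --- and, if necessary, analogous constraints on higher moments of $S$, or a direct non-existence statement for Seidel matrices with this three-eigenvalue shape --- against the values dictated by the putative spectrum should eliminate the remaining $r$. Once that is done, for even $r$ the only survivor is an odd integer $1/\alpha$, and the theorem follows.
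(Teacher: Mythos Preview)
Your overall strategy matches the paper's almost exactly: pass to the Seidel matrix $S$, use $\op{tr}S=0$ and $\op{tr}S^2=n(n-1)$ to pin down the minimal polynomial of $-1/\alpha$ in the irrational case as $x^2+ax+b$ with $a\in\{-1,0,1\}$, dispose of $a=-1$ by the eigenvalue-ordering contradiction, and handle $a=0$ via the conference-matrix/ETF argument. All of that is correct and essentially identical to what the paper does.

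The genuine gap is exactly where you flag it: the case $a=1$ with $r$ even. Your proposed route through congruences on $\op{tr}S^3$ and $\op{tr}S^4$ is not carried out, and in fact it does not obviously close: for instance at $r=6$ the predicted spectrum gives $\op{tr}S^3=0$, so the ``$\op{tr}S^3\equiv 0\pmod 6$'' test is passed, and the $\op{tr}S^4\pmod 8$ constraint also survives for infinitely many $r$. So higher trace moments alone, at least at the level of these small moduli, do not finish the job.

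The paper settles this case with a different and cleaner invariant: the \emph{determinant} of $S$ modulo $4$. Writing $r=2t$, the putative spectrum forces
\[
\det S = \bigl(1-3t\bigr)^{\,2t-1}\,(2t-1),
\]
which is odd when $t$ is even and divisible by $2^{\,2t-1}\ge 2^5$ when $t\ge 3$ is odd. But Greaves--Koolen--Munemasa--Sz\"oll\H{o}si show that any Seidel matrix of odd order $n$ satisfies $\det S\equiv 1-n\pmod 4$; with $n=4t-1$ this gives $\det S\equiv 2\pmod 4$, contradicting both possibilities above. That single congruence eliminates the entire $a=1$ family and completes the even-$r$ case. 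If you want to salvage the trace-moment idea, you would effectively be rederiving this determinant congruence (or something of equal strength) from Newton's identities, which is more work than simply invoking it.
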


\begin{proof}
It suffices to assume the existence of an equiangular set $X$ with angle $\alpha$ in $\mathbb R^r$ with $|X| = 2r-1$.
Consider its Seidel matrix $A=\frac{1}{\alpha} (G(X)-I)$, which
is a $(2r-1) \times (2r-1)$ symmetric matrix with integer coefficients and diagonal entries are all zeros,
but non-diagonal entries are either $1$ or $-1$. 
The matrix $A$ will have an eigenvalue $a = \frac{-1}{\alpha}$ with multiplicity at least $r-1$, since $G$ has an eigenvalue zero with multiplicity at least $r-1$.
If $a$ is rational, then $a$ must be an odd integer using the same argument as in the proof of the original Neumann theorem (cf.~\cite{lemmens1973}, Theorem~3.4).
Otherwise $a$ is irrational, and by degree count $a$ must a zero of an irreducible quadratic polynomial over $\mathbb Z$, which we write as $x^2 - c_1 x + c_2$; let $a^*$ be the other zero of this quadratic polynomial, which must also be an eigenvalue of $A$.

The characteristic polynomial of $A$ assumes the following form: $\op{char}(A) = (x^2-c_1x+c_2)^{r-1}(x-c_3)$, with $c_i$ being all integers. 
Comparing the coefficients, we get:
\begin{equation}
\label{eq:rootsum}
c_3+(r-1)c_1 = \op{tr} A = 0, \qquad \text{i.e.,} \qquad c_3 = -(r-1) c_1.
\end{equation}
Next, we see that
\begin{equation}
\label{eq:squaresum}
(r-1) (a^2 + {a^*}^2) + c_3^2 = \op{tr} A^2 = (2r-1) (2r-2).
\end{equation}
By Vieta's formula, $a^2 + {a^*}^2 = (a + a^*)^2 - 2 a a^* = c_1^2 - 2 c_2$.
Plug in this relation and (\ref{eq:rootsum}) back to (\ref{eq:squaresum}), we see that
\begin{equation*}
r c_1^2 - 2 c_2 = 4r-2, \qquad \text{i.e.,} \qquad c_2 = \frac{r}{2} (c_1^2 - 4) + 1.
\end{equation*}
Because $a$ and $a^*$ are distinct real roots of the quadratic equation $x^2 - c_1 x + c_2 = 0$, its discriminant must be positive, that is,
\begin{equation*}
0 < \Delta = c_1^2 - 4c_2 = (1-2r)(c_1^2-4).
\end{equation*}
Since $r \in \mathbb N$, that $\Delta > 0$ implies that $c_1 \in \{ 1, -1, 0 \}$. 
We look into these three cases separately:
\begin{itemize}
\item $c_1 =  1$.  Then $(c_2, c_3) = (-\dfrac{3r}{2} + 1, -(r-1) )$.  However, this case is not allowed, because the matrix $G$, being a Gram matrix, must be positive semidefinite, hence $0$ is the smallest eigenvalue of $G$, and this implies that $a = -\frac{1}{\alpha} = \frac{-1-\sqrt{6r-3}}{2}$ is the smallest eigenvalue of $A$.  However, $c_3 = -(r-1)$, which is also an eigenvalue of $A$ by assumption, is always smaller than $a$, and this is a contradiction. 
\item $c_1 = -1$.  Then $(c_2, c_3) = ( -\dfrac{3r}{2} + 1, r - 1 )$.  Because $c_2 \in \mathbb Z$, we see that $r$ has to be an even integer; let $r = 2t$ for some $t \in \mathbb N$, $t > 1$.  
Now we compute the determinant of $A$ as $\det A = c_2^{r-1} \cdot c_3 = (1 - 3t)^{2t-1} \cdot (2t-1)$.  If $t$ is even, then $\det A$ is an odd integer; if $t$ is odd, then $\det A$ is a multiple of $2^5$ since $t$ is at least $3$.
Both contradict Corollary~3.6 of \cite{greaves2016} which states that $\det A \equiv 1 - (4t-1) = 2 - 4t \equiv 2 \pmod{4}$ when $A$ is a Seidel matrix of order $n = 2r-1 = 4t-1$.

\item $c_1 =  0$.  Then $(c_2, c_3) = (-2r+1, 0)$, and the angle is $\alpha = \dfrac{1}{\sqrt{2r-1}}$.
The following proof is taken from~\cite{greaves2017symmetric}.
Assume that there exists such a $(2r-1) \times (2r-1)$ symmetric matrix $A$ from the equiangular set $X$.
Then the characteristic polynomial of $A$ must be $x (x^2 - 2r + 1)^{2t-1}$.
Consider the matrix $M = (2r - 1) I - A^2$.
Then the spectrum of $M$ is $\{ [0]^{2r-2}, [2r-1]^1 \}$, which implies that $M$ is positive semidefinite and of rank $1$.
Observe that $M$ has diagonal entries only $1$ and has off-diagonal integer entries congruent to $1 \pmod{2}$.
Since $M$ is of rank $1$, $M$ is a $(1,-1)$-matrix.
By switching $M$ so that the entries in the first column and the first row are all $1$, it follows from the rank of $M$ being $1$ that $M = J$, the all-one matrix.
Multiplying the all-one column vector $\bm{1}$ of length $2r-1$ by $(2r-1) I - A^2 = J$, we have $(2r-1) \bm{1} - A^2 \bm{1} = (2r-1) \bm{1}$.
This implies that $A \bm{1} = 0$.

Now consider a symmetric matrix $\displaystyle B = \begin{bmatrix} A & \bm{1} \\ \bm{1}^{\ol{T}} & 0 \end{bmatrix}$ of order $2r$ with $0$ on the diagonal and $\pm 1$ otherwise.
Then
\begin{equation*}
  B^2 = \begin{bmatrix}
  A^2 + J & A \bm{1} \\ \bm{1}^{\ol{T}} A & \bm{1}^{\ol{T}} \bm{1}  
 \end{bmatrix}
 = \begin{bmatrix}
   (2r-1) I & 0 \\ 0 & (2r-1)
 \end{bmatrix} = (2r-1) I.
\end{equation*}
Thus $B$ is a symmetric conference matrix of order $2r$.
However, it is known in \cite[Corollary~2.2]{delsarte1971orthogonal} that the order of a symmetric conference matrix must be congruent to $2$ modulo $4$, hence $r$ must be an odd integer in this case.
It also follows that $B$ has the eigenvalues $\pm \sqrt{2r-1}$, and thus $G = I - \frac{1}{\sqrt{2r-1}} B$ is positive semidefinite.
This matrix $G$ is the Gram matrix of an $r \times 2r$ equiangular tight frame.
The upper-left principal submatrix of size $(2r-1)$ of $G$ is the Gram matrix of the equiangular set $X$.
Therefore $X$ is in some equiangular tight frame of size $2r$ in $\mathbb R^r$.
\end{itemize}
\end{proof}


\noindent{\bf Remark.}
We also consider possible irrational angles $\alpha$ that could produce $14$ equiangular lines in $\mathbb R^8$.
Let $A$ be the Seidel matrix of $14$ equiangular lines $X$ of rank $8$ with angle $\alpha \in \mathbb R \setminus \mathbb Q$.
The number $a = -1/\alpha$ is the smallest eigenvalue of $A$ of multiplicity $6$ and hence $\mathbb Q(a)$ is a quadratic number field.
We may assume that the characteristic polynomial is of the following form:
\begin{equation*}
  (x^2 - c_1 x + c_2)^6 (x^2 - c_3 x + c_4), \qquad
  c_1, c_2, c_3, c_4 \in \mathbb Z.
\end{equation*}
The trace conditions on $A$ and $A^2$ imply that
\begin{align*}
    0 &= \op{tr} A = 6 c_1 + c_3, \\
    182 = 14 \cdot 13 &= \op{tr} A^2 = 6 (c_1^2 - 2c_2) + (c_3^2 - 2 c_4).
\end{align*}
Since all eigenvalues of $A$ are real and $a$ is not rational, we also have $c_1^2 > 4 c_2$ and $c_3^2 \geq 4 c_4$.
Putting all these conditions together, we find that $a = -1/\alpha$ could only be the negative root of some quadratic equation $x^2 - c_1 x + c_2 = 0$, where 
\begin{align*}
    (c_1, c_2) \in \{ & (-2, -7), (-2, -6), (-2, -5), (-2, -4), (-2, -2), (-2, -1), (-1, -13), \\ 
    & (-1, -11), (-1, -10), (-1, -9), (-1, -8), (-1, -7), (-1, -5), (-1, -4), \\ 
    & (-1, -3), (-1, -1), (0, -15), (0, -14), (0, -13), (0, -12), (0, -11), \\
    & (0, -10), (0, -8), (0, -7), (0, -6), (0, -5), (0, -3), (0, -2), (1, -13) \\
    & (1, -11), (1, -10), (1, -9), (1, -8), (1, -7), (1, -5), (1, -4), (1, -3), \\
    & (1, -1), (2, -7), (2, -6), (2, -5), (2, -4), (2, -2), (2, -1) \}.
\end{align*}
We look into each of the possible angles to conclude that $\alpha = \frac{2\sqrt{2}-1}{7}$ is the only irrational angle with which there are $14$ equiangular lines of rank $8$.

\vskip 0.1in

We also need the inequality~(\ref{eq:relative-bound}), which is the so-called \emph{relative bound} for equiangular lines.
\begin{thm}[\cite{vanlint1966}, p.342]
\label{thm:relative-bound}
Let $X$ be an equiangular set with angle $\alpha$ in $\mathbb R^r$.
If $r < \frac{1}{\alpha^2}$, then
\begin{equation}
\label{eq:relative-bound}
|X| \leq \frac{r (1 - \alpha^2)}{1 - r \alpha^2}.
\end{equation}
\end{thm}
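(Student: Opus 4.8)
The plan is to lift the configuration into the space of symmetric $r \times r$ real matrices equipped with the Hilbert--Schmidt inner product $\langle M, N \rangle = \op{tr}(M^{\ol{T}} N)$, and to read off the bound from the positive semidefiniteness of a single Gram matrix. Write $X = \{ x_1, \dots, x_s \}$, so that the claim is $s \leq \frac{r(1-\alpha^2)}{1 - r\alpha^2}$.

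First I would attach to each unit vector $x_i \in X$ its rank-one orthogonal projection $P_i = x_i x_i^{\ol{T}}$. A direct computation gives $\langle P_i, P_j \rangle = \op{tr}(x_i x_i^{\ol{T}} x_j x_j^{\ol{T}}) = \langle x_i, x_j \rangle^2$, so $\langle P_i, P_i \rangle = 1$ and $\langle P_i, P_j \rangle = \alpha^2$ whenever $i \neq j$; moreover $\langle P_i, I_r \rangle = \op{tr}(P_i) = 1$ and $\langle I_r, I_r \rangle = r$. Next I would remove the component along the identity: set $Q_i := P_i - \frac{1}{r} I_r$, which is traceless, so $\langle Q_i, I_r \rangle = 0$. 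Expanding, $\langle Q_i, Q_j \rangle = \langle P_i, P_j \rangle - \frac{1}{r}$, hence the Gram matrix $N := \bigl[ \langle Q_i, Q_j \rangle \bigr]_{i,j=1}^{s}$ equals $(1 - \alpha^2) I_s + (\alpha^2 - \frac{1}{r}) J_s$. Its eigenvalues are $1 - \alpha^2$ with multiplicity $s - 1$ and $(1 - \alpha^2) + s(\alpha^2 - \frac{1}{r})$ with multiplicity $1$, the latter having the all-ones eigenvector.

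Finally, since $N$ is a Gram matrix it is positive semidefinite, so each of its eigenvalues is nonnegative; the value $1 - \alpha^2$ is automatically positive because $\alpha < 1$, so the only nontrivial constraint is $(1 - \alpha^2) + s\bigl( \alpha^2 - \frac{1}{r} \bigr) \geq 0$, i.e.\ $s\bigl( \frac{1}{r} - \alpha^2 \bigr) \leq 1 - \alpha^2$. Here the hypothesis $r < \frac{1}{\alpha^2}$ enters crucially: it forces $\frac{1}{r} - \alpha^2 > 0$, so dividing does not reverse the inequality and we obtain $s = |X| \leq \frac{1 - \alpha^2}{1/r - \alpha^2} = \frac{r(1-\alpha^2)}{1 - r\alpha^2}$, as desired.

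As for difficulty, there is essentially no obstacle here: the whole argument is a short piece of linear algebra, and it does not even invoke the dimension $\binom{r+1}{2}$ of the ambient matrix space. The only points requiring a moment's care are that one must project out the $I_r$ direction before forming the Gram matrix (otherwise the relevant eigenvalue is $(1-\alpha^2) + s\alpha^2 > 0$, which gives nothing), and that the sign condition $r\alpha^2 < 1$ is precisely what converts positive semidefiniteness into a genuine upper bound on $|X|$ rather than a vacuous statement.
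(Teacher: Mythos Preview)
Your proof is correct and is the standard argument for the relative bound. Note, however, that the paper does not actually supply a proof of this theorem: it is quoted from van~Lint and Seidel~\cite{vanlint1966} and used as a black box, so there is no in-paper proof to compare against. Your Gram-matrix computation with the traceless projections $Q_i = x_i x_i^{\ol{T}} - \frac{1}{r} I_r$ is exactly the classical derivation of this inequality, and every step checks out.
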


Together with Theorems~\ref{thm:general Neu} and~\ref{thm:relative-bound}, we realize that for each positive integer $r$ there are only a couple of angles to be checked to determine $M^*(r)$.

\begin{thm}
\label{thm:m-star-8-9-10}
We have
\begin{equation*}
M^*(8) = 14, \qquad M^*(9) = 18, \qquad \text{and} \qquad M^*(10) = 18.
\end{equation*}
\end{thm}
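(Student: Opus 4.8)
The plan is to compute each $M^*(r)$ as $\max_\alpha M_\alpha(r)$, using the generalized Neumann theorem (Theorem~\ref{thm:general Neu}) to reduce the possible angles to a short explicit list, and then bounding $M_\alpha(r)$ for each surviving angle either by Table~\ref{tb:smallMr} or by the relative bound~(\ref{eq:relative-bound}). Since Theorem~\ref{thm:general Neu} says that whenever a rank-$r$ equiangular set has more than $2r-2$ lines the reciprocal angle $\tfrac1\alpha$ must be an odd integer (for $r$ even) or an odd integer or $\sqrt{2r-1}$ (for $r$ odd), every angle $\alpha$ outside this list already satisfies $M_\alpha(r)\le 2r-2$, so only the listed angles need individual attention.

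First I would record the lower-bound constructions. For $r=8$, Proposition~\ref{prop:8-14} (equivalently the entry $M_{1/3}(8)=14$ of Table~\ref{tb:smallMr}) gives $14$ equiangular lines of angle $\tfrac13$ spanning $\mathbb R^8$, so $M^*(8)\ge 14$. For $r=10$, the Lemmens--Seidel construction for angle $\tfrac13$ gives $M_{1/3}(10)=2\cdot 10-2=18$, so $M^*(10)\ge 18$. For $r=9$ the relevant special angle is $\tfrac1{\sqrt{17}}=\tfrac1{\sqrt{2\cdot 9-1}}$: a symmetric conference matrix of order $18$ exists (Paley, since $17\equiv 1\pmod 4$ is prime), and the construction in the proof of Theorem~\ref{thm:general Neu} then produces an equiangular tight frame of $18$ unit vectors in $\mathbb R^9$, all $18$ lines being equiangular of angle $\tfrac1{\sqrt{17}}$ and of rank exactly $9$; hence $M_{1/\sqrt{17}}(9)=18$ and $M^*(9)\ge 18$.

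For the upper bounds I would argue by contradiction: suppose $X\subset\mathbb R^r$ is equiangular of angle $\alpha$, of rank exactly $r$, with $|X|>2r-2$, for $r\in\{8,9,10\}$. By Theorem~\ref{thm:general Neu}, $\tfrac1\alpha$ is an odd integer $m$ (when $r$ is even) or an odd integer $m$ or $\sqrt{2r-1}$ (when $r$ is odd). For the finitely many small odd $m$ I would read the exact bounds off Table~\ref{tb:smallMr} and the Lemmens--Seidel value $M_{1/3}(r)=2r-2$: namely $M_{1/3}(8)=14$, $M_{1/5}(8)=10$, $M_{1/7}(8)=9$; $M_{1/3}(9)=16$, $M_{1/5}(9)=12$, $M_{1/7}(9)=10$; $M_{1/3}(10)=18$, $M_{1/5}(10)=16$. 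For every remaining odd $m$ with $m^2>r$ the relative bound~(\ref{eq:relative-bound}) applies and gives $|X|\le\frac{r(m^2-1)}{m^2-r}$, whose right-hand side decreases in $m$, so plugging in the smallest admissible $m$ yields $|X|\le 8$ for $r=8$ (at $m=9$), $|X|\le 10$ for $r=9$ (at $m=9$), and $|X|\le 12$ for $r=10$ (at $m=7$); in particular the case $m=7$, $r=10$ is handled this way since $10<49$. Finally, for $r=9$ and $\alpha=\tfrac1{\sqrt{17}}$ the relative bound (with $9<17$) gives $|X|\le\frac{9\cdot 16}{8}=18$. In every case $|X|\le 14$ when $r=8$, $|X|\le 18$ when $r=9$, and $|X|\le 18$ when $r=10$. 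This contradicts $|X|>2r-2$ for $r=8$ and $r=10$ (where $2r-2$ already equals the claimed value), and for $r=9$ it shows directly that no rank-$9$ equiangular set has more than $18$ lines. Combining with the lower bounds gives $M^*(8)=14$, $M^*(9)=18$, $M^*(10)=18$.

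The only genuinely delicate points are: (a) checking that the case list from Theorem~\ref{thm:general Neu} is exhausted completely — the small odd $m$ by Table~\ref{tb:smallMr} (whose entries are true upper bounds, being computed by the saturated-set method of~\cite{lin2018saturated} exactly as in Proposition~\ref{prop:8-14}) and by $M_{1/3}(r)=2r-2$, all large odd $m$ with $m^2>r$ by the relative bound, the exceptional angle $\tfrac1{\sqrt{17}}$ for $r=9$ by the relative bound, and every other $\alpha$ by the bound $2r-2$ inherent in Theorem~\ref{thm:general Neu}; and (b) producing the rank-$9$ configuration of $18$ lines of angle $\tfrac1{\sqrt{17}}$, for which the existence of a symmetric conference matrix of order $18$ is exactly the input needed. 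I expect (a), the verification that nothing slips between the table and the relative bound, to be the main point of care; the numerics are routine.
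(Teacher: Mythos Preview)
Your argument is correct and essentially identical to the paper's proof: you use Theorem~\ref{thm:general Neu} to reduce to odd reciprocals (plus $\tfrac{1}{\sqrt{17}}$ when $r=9$), then invoke Table~\ref{tb:smallMr} for the small angles and the relative bound~(\ref{eq:relative-bound}) for the larger ones, with the Paley/conference-matrix construction providing the $r=9$ lower bound. The only cosmetic difference is that you frame the upper bound as a contradiction with $|X|>2r-2$ and make the relative-bound numerics explicit (including the upper bound $18$ for $\alpha=\tfrac{1}{\sqrt{17}}$), whereas the paper simply reads off the values directly.
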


\begin{proof}
For the case $r=8$, 
we only need to check the $\alpha$-values for $\{\frac{1}{3}, \frac{1}{5}, \frac{1}{7} \}$ by Theorems~\ref{thm:general Neu} and \ref{thm:relative-bound}.
Since $M_{\frac13}(8) = 14$, $M_{\frac15}(8) = 10$, and $M_{\frac17}(8) = 9$, as listed in Table~\ref{tb:smallMr},
we conclude that $M^*(8) = 14$.

For $n = 9$, we read from Table~\ref{tb:smallMr} that $M_{\frac{1}{3}}(9) = 16$ and $M_{\frac{1}{5}}(9) = 12$.
By Theorem~\ref{thm:relative-bound} we obtain that $M_{\frac{1}{2n+1}}(9) \leq 10$ for positive integers $n \geq 3$.
Finally we find that $M_{\frac{1}{\sqrt{17}}}(9) = 18$, which can be constructed by the Paley graph with cardinality 17 (see \cite{waldron2009construction}).
Hence $M^*(9) = 18$.

Table~\ref{tb:smallMr} shows that $M_{\frac{1}{3}}(10) = 18$,
and $M_{\frac{1}{2n+1}}(10) \leq 16$ for every positive integer $n \geq 2$ by Theorem~\ref{thm:relative-bound}.
According to Theorem~\ref{thm:general Neu}, we do not need to check any other angles.
Hence $M^*(10) = 18$.
\end{proof}

Notice that Theorem~\ref{thm:general Neu} is universal for every dimension $r$.
We may solve for more exact values of $M^*(r)$ if we spend more time on computer calculation.
However the work will be repetitious so we stop here.


\section{Closing remarks}
\label{sec:closing}
We note that the results of Theorem~\ref{thm:2vec} and Lemma~\ref{lem:two-X31} are not optimal in the sense that the upper bound on the cardinality of a pillar can be lowered if more vectors are presents in another pillar.
For instance, with the angle $\frac{1}{5}$ and base size $4$, it should not be possible to have four $(4,1)$ pillars with $24$ vectors each (this produces the number $96$ in Proposition~\ref{prop:a5-41}).
Nevertheless our bounds are sufficient to beat Lemmens-Seidel's conjecture, so we did not pursue further.
On the other hand, these bounds are valid regardless of the dimensions or ranks where the equiangular set lives.

Based on our experiments, we believe that there can only be a large pillar; by this we form the following conjecture.

\begin{conj}
There is a constant $C$ that depends on the angle $\alpha$ and the base size $K$, but not to the dimension or rank, of any equiangular set, such that there could not be two pillars of size at least $C$.
\end{conj}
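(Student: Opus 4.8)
The plan is to distill the obstruction behind Lemma~\ref{lem:two-X31}, Theorem~\ref{thm:2vec} and Lemma~\ref{lem:a5k5-one} into a dimension-free statement. Fix a $K$-base $P$, put $\Gamma=\op{span}(P)$ and let $\Gamma^{\perp}$ be its orthogonal complement. Every vector of a pillar $\bar x$ splits as $h_{\bar x}+c$ with $h_{\bar x}\in\Gamma$ the \emph{same} for the whole pillar, so $\|c\|^{2}$ and the off-diagonal inner products $\langle c,c'\rangle\in\{\alpha-\|h_{\bar x}\|^{2},\,-\alpha-\|h_{\bar x}\|^{2}\}$ depend only on $\alpha$, $K$ and the sign pattern of $\bar x$. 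Hence the normalized components $\hat c$ of $\bar x$ form a spherical two-distance set in $\Gamma^{\perp}$ whose Gram matrix has the shape $(1-a_{+})I+a_{+}J-(a_{+}-a_{-})A'$, where $a_{\pm}=\tfrac{\pm\alpha-\|h_{\bar x}\|^{2}}{1-\|h_{\bar x}\|^{2}}$ and $A'$ is the Seidel graph of $\bar x$; likewise for a second pillar $\bar u$, with invariants $a_{\pm}^{\bar u}$. For $\hat c$ coming from $\bar x$ and $\hat d$ from $\bar u$, the value $\langle\hat c,\hat d\rangle$ lies in a two-element set $\{b_{+},b_{-}\}$ determined by $\alpha$, $K$ and the two sign patterns (through the fixed number $\langle h_{\bar x},h_{\bar u}\rangle$). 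As there are only finitely many pairs of sign patterns, it is enough to bound $|\bar x|$ in terms of $\alpha$ and $K$ for each fixed pair.

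I would then run the following scheme. \textbf{(1)} By Theorem~5.1 of~\cite{lemmens1973}, applied --- as in Lemma~\ref{lem:a5k5-one} --- to the auxiliary set $Y=P\cup\{p_{K+1}\}\cup\bar x$ of base size $K+1$, every component of the Seidel graph of a pillar is a sub-diagram of one of the five diagrams in Figure~\ref{fig:graphtypes}, hence has maximum degree at most $3$. \textbf{(2)} A bounded-degree graph with $|\bar u|$ large contains an independent set $W$ of any prescribed size $w=w(\alpha,K)$; its vectors are pairwise non-adjacent, so $G(W)=(1-a_{+}^{\bar u})I_{w}+a_{+}^{\bar u}J_{w}$, an explicit positive-definite matrix. \textbf{(3)} Classify the vectors of $\bar x$ by their \emph{type} $\tau(\hat c)=\bigl(\langle\hat c,\hat d\rangle\bigr)_{d\in W}\in\{b_{+},b_{-}\}^{w}$, giving at most $2^{w}$ classes. \textbf{(4)} Bound the size $n_{\tau}$ of each class uniformly in $r$: the Gram matrix of the $\hat c$'s in a class together with $W$ is positive semidefinite, and since those $\hat c$'s share a common inner-product vector $v_{\tau}$ with $W$, the Schur complement relative to $G(W)$ (Theorem~\ref{thm:schur-decomp}) leaves
\begin{equation*}
(1-a_{+})I_{n_{\tau}}-\bigl(v_{\tau}^{\ol{T}}G(W)^{-1}v_{\tau}-a_{+}\bigr)J_{n_{\tau}}-(a_{+}-a_{-})A'\;\succcurlyeq\;0
\end{equation*}
(here $a_{\pm}$, $A'$ refer to $\bar x$). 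Evaluating the quadratic form at $\bm{1}$, and using $\bm{1}^{\ol{T}}A'\bm{1}=2\,|E(A')|=O(n_{\tau})$ because $A'$ has bounded degree, forces $\bigl(a_{+}-v_{\tau}^{\ol{T}}G(W)^{-1}v_{\tau}\bigr)n_{\tau}^{2}+O(n_{\tau})\ge 0$. Since $v_{\tau}^{\ol{T}}G(W)^{-1}v_{\tau}$ is the squared length of the projection of $\hat c$ onto $\op{span}(W)$, it is at least $\tfrac{w\min(b_{+}^{2},b_{-}^{2})}{1+(w-1)a_{+}^{\bar u}}$, which tends to $\tfrac{\min(b_{+}^{2},b_{-}^{2})}{a_{+}^{\bar u}}$ as $w\to\infty$; choosing $w$ large enough that this exceeds $a_{+}$ (for every type and pair) makes $n_{\tau}$ bounded in terms of $\alpha,K$. \textbf{(5)} Conclude $|\bar x|\le 2^{w}\max_{\tau}n_{\tau}$ and take $C$ to be the largest bound obtained over the finitely many pattern pairs. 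When $\bar x$ is a $(K,1)$ pillar, $A'=0$ and the display is the $I$-and-$J$ eigenvalue inequality underlying the bullet computations of Lemma~\ref{lem:two-X31}, so the scheme genuinely extends that lemma.

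The hard part is to make step~(4) rigorous in full generality, and I expect three difficulties. First, the argument requires $\min(b_{+}^{2},b_{-}^{2})>a_{+}\,a_{+}^{\bar u}$ (a relation among quantities depending only on $\alpha,K$ and the pattern pair); it breaks down precisely when $\langle h_{\bar x},h_{\bar u}\rangle=\pm\alpha$, for then $b_{+}$ or $b_{-}$ vanishes and the projection estimate degenerates, so those pattern pairs call for a separate device --- presumably restricting to $\op{span}(W)^{\perp}$ and recursing. Second, one must produce a \emph{single} $w(\alpha,K)$ that works simultaneously for all of the finitely many types, and check it against the a priori bound $v_{\tau}^{\ol{T}}G(W)^{-1}v_{\tau}\le 1$ (if this is violated for some type, then $|\bar u|$ is already bounded and the case is done). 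Third, and most delicate, one must keep the $O(n_{\tau})$ error uniform when the class's Seidel graph breaks into many small Dynkin components; excluding such configurations alongside a second large pillar is essentially Lemma~\ref{lem:a5k5-one} carried out for arbitrary parameters, and it is exactly where the present paper only succeeds for $(\alpha,K)\in\{(\tfrac15,3),(\tfrac15,5)\}$ (and partially $(\tfrac15,4)$). The conjecture is the assertion that all three can be overcome for every $\alpha$ and $K$.
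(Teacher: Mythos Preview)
The statement is a \emph{conjecture} in the paper's closing remarks; the paper offers no proof, so there is nothing to compare against. Your proposal is a strategy rather than a proof, and you yourself flag three unresolved difficulties at the end. That honesty is appropriate, but there is a more basic gap that undermines the scheme before those three difficulties are even reached.

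Step~(1) does not generalize. In Lemma~\ref{lem:a5k5-one} the auxiliary vector $p_{6}=-\sum_{i=1}^{5}p_{i}$ is a unit vector with $\langle p_{6},p_{i}\rangle=-\tfrac15$ \emph{and} with $\langle p_{6},x\rangle\in\{\pm\tfrac15\}$ for every $x$ in a $(5,2)$ pillar, precisely because $K=1/\alpha$ so that $K+1=1/\alpha+1$ is the simplex size. For arbitrary $(\alpha,K)$ there is no vector $p_{K+1}$ that simultaneously extends $P$ to a $(K{+}1)$-clique \emph{and} has inner product $\pm\alpha$ with every vector of $\bar{x}$; hence $Y=P\cup\{p_{K+1}\}\cup\bar{x}$ is not equiangular and Theorem~5.1 of~\cite{lemmens1973} does not apply. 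Moreover, that theorem is specific to $\alpha=\tfrac15$ and base size $6$: the Dynkin-diagram classification enters because the Gram matrix of a $(6,3)$ pillar happens to be $\tfrac15 J+\tfrac25(2I-A)$, forcing the spectral bound $\lambda_{\max}(A)\le 2$. For a general pillar the Gram matrix is $(1-a_{+})I+a_{+}J-(a_{+}-a_{-})A'$, and positive semidefiniteness only gives a bound of the form $\lambda_{\max}(A')\lesssim\tfrac{1-a_{+}}{a_{+}-a_{-}}$ (with a correction from $J$), which is not $2$ in general; there is no classification of graphs with $\lambda_{\max}\le c$ for arbitrary $c>2$, and for $c>2$ such graphs can have unbounded degree. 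Without the bounded-degree conclusion of Step~(1), Step~(2) fails (you cannot extract a large independent set of prescribed size inside $\bar u$), and the $\bm 1^{\ol T}A'\bm 1=O(n_{\tau})$ estimate in Step~(4) also fails. So the argument as written establishes nothing beyond the parameter pairs already handled in the paper.
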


This conjecture is coherent to Sudakov's result that when the angle is fixed except for $\frac{1}{3}$, the upper bound for equiangular sets in $\mathbb R^r$ is at most $1.92 r$ asymptotically (see~\cite{balla2018equiangular}).
Sudakov had a construction of equiangular sets with angle $\alpha = \frac{1}{2n+1}$ and rank $r$ which concentrates in one pillar whose cardinality is asymptotic to $\frac{(n+1) r}{n}$ for every positive integer $n$~(see \cite{balla2018equiangular}, Conjecture~6.1).

The only unsolved case towards the Lemmens-Seidel conjecture is the $(4,2)$ pillars.
King and Tang~\cite{king2016} showed that the unit vectors within one $(4,2)$ pillar form a $2$-distance set of angles $\frac{1}{13}$ and $-\frac{5}{13}$.
But the semidefinite linear programming bound $s(r, \frac{1}{13}, -\frac{5}{13})$ cannot be small.
Consider the $3\ell \times 3\ell$ matrix of the following block form:
\begin{equation*}
\begin{bmatrix}
B & \frac{1}{13} J_3 & \cdots & \frac{1}{13} J_3 \\
\frac{1}{13} J_3 & B & \cdots & \frac{1}{13} J_3 \\
\vdots & \vdots & \ddots & \vdots \\
\frac{1}{13} J_3 & \frac{1}{13} J_3 & \cdots & B
\end{bmatrix}, \qquad 
\text{where\ } B = \begin{bmatrix}
1 & -\frac{5}{13} & -\frac{5}{13} \\
-\frac{5}{13} & 1 & - \frac{5}{13} \\
-\frac{5}{13} & -\frac{5}{13} & 1
\end{bmatrix}_{3 \times 3}
\end{equation*}
This matrix has rank $2\ell+1$ and positive semidefinite, so it is the Gram matrix of $3\ell$ vectors of rank $2\ell+1$.
On the other hand, the base size of the equiangular set generated from this matrix is $6$, for in such a pillar there are many independent $3$-cliques, and two independent $3$-cliques are switching equivalent to a $6$-clique (by switching all three vertices in one of the $3$-cliques).
So we raise another conjecture which relates to Theorem~5.1 of~\cite{lemmens1973}.
\begin{conj}
In the case where $\alpha = \frac{1}{5}$ and base size $K=4$, there are only a finite number of families of connected graphs $S_i$'s such that the connected components of the Seidel graph of any $(4,2)$ pillar in an equiangular set is either a graph or a subgraph of a graph in $S_i$. 
\end{conj}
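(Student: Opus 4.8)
The plan is to reduce the conjecture to a graph-theoretic classification governed by a single semidefinite inequality, exactly as Theorem~5.1 of \cite{lemmens1973} does for $(6,3)$ pillars. Fix a $4$-base $P=\{p_1,p_2,p_3,p_4\}$ of an equiangular set $X$ with angle $\frac15$ and $K(X)=4$, let $\bar x$ be a $(4,2)$ pillar with Seidel graph $S$ and adjacency matrix $A$, and write $x=h+c$ for the pillar decomposition of a vector $x\in\bar x$. A short computation (analogous to those in Section~\ref{sec:schur}) gives $\|h\|^2=\frac{2}{15}$ and $\|c\|^2=\frac{13}{15}$, so that two vectors of $\bar x$ satisfy $\langle\hat c_i,\hat c_j\rangle=\frac1{13}$ or $-\frac5{13}$ according as they are non-adjacent or adjacent in $S$ --- this is the King and Tang $2$-distance set~\cite{king2016} recalled above. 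Consequently the Gram matrix of the $\hat c_i$ equals $\frac{12}{13}I+\frac1{13}J-\frac6{13}A$, and its positive semidefiniteness is equivalent to
\begin{equation*}
6(2I-A)+J\succcurlyeq 0 .
\end{equation*}
Moreover $S$ is triangle-free: a triangle inside $\bar x$, together with the two base vectors to which every vector of $\bar x$ is adjacent, would be a $K_5$ in the Seidel graph of $X$, contradicting $K(X)=4$. So the conjecture is equivalent to the statement that the triangle-free connected graphs occurring as components of graphs $S$ satisfying the displayed inequality fall into finitely many families.

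The first step is to split off the ``critical'' components. Since $J=\bm{1}\bm{1}^{\ol{T}}$ is a rank-one positive semidefinite perturbation, $6(2I-A)+J\succcurlyeq0$ forces $6(2I-A)$ to have at most one negative eigenvalue; equivalently $A$ has at most one eigenvalue larger than $2$, so at most one component $H_0$ of $S$ has spectral radius exceeding $2$. Every other component $H$ has $\lambda_{\max}(A(H))\le2$, hence by \cite{smith1970some} (the input used in Theorem~5.1 of \cite{lemmens1973}) is a subgraph of one of the affine diagrams $\widetilde A_\ell,\widetilde D_\ell,\widetilde E_6,\widetilde E_7,\widetilde E_8$ of Figure~\ref{fig:graphtypes}, and being triangle-free it is not $\widetilde A_2=C_3$. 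This disposes of every component but one within finitely many families, and it remains to control the single supercritical component $H_0$.

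Restricting the displayed inequality to the vertices of $H_0$ shows $H_0$ alone satisfies $6(2I-A(H_0))+J\succcurlyeq0$. We would now bound $H_0$. Evaluating the quadratic form on the vector equal to $s$ at a vertex $v$ of degree $\Delta$, to $t$ on each of the (necessarily independent) neighbours of $v$, and to $0$ elsewhere yields a binary quadratic form in $(s,t)$ whose nonnegativity is equivalent to $\Delta\le13$; indicators of cliques forbid $K_4$, and indicators of $K_{2,m}$ and of two vertex-disjoint induced stars (using Cauchy interlacing, which propagates $\lambda_2\le2$ to induced subgraphs) force $m$ to be bounded and $H_0$ to have only a bounded number of high-degree vertices. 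Thus $\lambda_{\max}(H_0)$ is bounded and $H_0$ consists of a bounded ``core'' containing all of its branching, with long induced paths attached. Since the Perron eigenvector $v$ of $H_0$ decays geometrically along such a path, a long tail contributes only a bounded amount to $\bm{1}^{\ol{T}}v$ and to the principal minors controlling positive semidefiniteness; one would use this localization to show that only finitely many cores occur and that each extends to finitely many families (such as $\{C_\ell\}$, $\{\widetilde D_\ell\}$, spiders with two legs of length $2$, cycles with a fixed defect inserted, and so on), all of which the conjecture allows.

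The main obstacle is precisely this localization step: excluding the unboundedly many ways a supercritical ``defect'' might be threaded along an arbitrarily long path or cycle while keeping $6(2I-A)+J$ positive semidefinite. One must quantify how appending a path-tail of length $t$ shifts the unique negative eigenvalue of $6(2I-A)$ and show this shift is exponentially small in $t$, so that the constraint already ``saturates'' on a bounded core, and then enumerate --- presumably by a finite computer search of the kind carried out in Sections~\ref{sec:schur} and~\ref{sec:a5-k3} --- the admissible cores. Establishing that the minimal supercritical triangle-free connected graphs are themselves of bounded size is the natural companion fact to prove along the way.
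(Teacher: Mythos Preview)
The statement you are attempting to prove is not proved in the paper; it is raised in Section~\ref{sec:closing} as an open \emph{conjecture}, precisely because the authors could not close the $K=4$ case of the Lemmens--Seidel conjecture. So there is no ``paper's own proof'' to compare against.

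Your reduction is correct and is indeed the natural analogue of Theorem~5.1 of \cite{lemmens1973}: the Gram matrix of the $\hat c_i$ in a $(4,2)$ pillar is $\frac{12}{13}I+\frac1{13}J-\frac6{13}A$, positive semidefiniteness is equivalent to $6(2I-A)+J\succcurlyeq0$, the pillar's Seidel graph is triangle-free because a triangle together with the two base vectors it is adjacent to would give a $K_5$, and the rank-one interlacing argument shows at most one connected component can have spectral radius exceeding $2$. All components with $\lambda_{\max}\le 2$ are handled by Smith's theorem exactly as in Lemma~\ref{lem:a5k5-one}. Your degree bound $\Delta\le 13$ is also correct.

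The genuine gap is exactly the one you name: controlling the single supercritical component $H_0$. Everything after ``one would use this localization'' is programmatic rather than a proof. The difficulty that prevents the paper from settling this case is that, unlike the $(6,3)$ situation where the constraint is simply $2I-A\succcurlyeq0$ (a clean eigenvalue bound classified by Smith), here the rank-one term $J$ allows $A$ to have one eigenvalue arbitrarily close to (though not much larger than) some threshold depending on the Perron vector's overlap with $\bm 1$, and there is no off-the-shelf classification of triangle-free connected graphs with $\lambda_2\le 2$ and a single controlled supercritical eigenvalue. Your sketch of ``core plus long paths, geometric decay of the Perron vector, then computer enumeration of cores'' is plausible, but none of the required bounds (finitely many cores, the decay estimate being uniform over all attachments, the enumeration terminating) is established. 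Until that analysis is carried out, what you have is a strategy for attacking the conjecture, not a proof of it --- which is consistent with the paper's own assessment that this case remains open.
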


\subsection*{Acknowledgements}
The authors thank Eiichi Bannai and Alexey Glazyrin for their helpful discussions on this work.   
This material is based upon work supported by the National Science Foundation under
Grant No.~DMS-1439786 while the second author was in residence at the Institute for 
Computational and Experimental Research in Mathematics in Providence, RI, during the Point configurations in Geometry, Physics and Computer Science Program. Part of this work was done when the second author visited National Center for Theoretical Sciences (NCTS), Taiwan, in the summer of 2018. The authors are grateful to the support of NCTS.
Finally, the authors would like to thank anonymous referees who offer useful comments on improvement of Theorem~\ref{thm:general Neu} and give clues to the remark following Theorem~\ref{thm:general Neu}. 

\nocite{sagemath}

\bibliographystyle{amsalpha}
\bibliography{equiangular}

\end{document}